\numberwithin{equation}{section}
\newtheorem{theorem}{Theorem}[section]
\newtheorem{proposition}[theorem]{Proposition}
\theoremstyle{definition}
\newtheorem{definition}[theorem]{Definition}
\theoremstyle{plain}
\newtheorem{lemma}[theorem]{Lemma}
\newtheorem{corollary}{Corollary}[theorem]
\theoremstyle{remark}
\newtheorem{remark}[theorem]{Remark}
\title[Self-Improving Properties of Nonlocal Double-Phase Equations]{Self-Improving Inequalities for Bounded Weak Solutions to Nonlocal Double Phase Equations}
\thanks{Support from NSF  DMS-1615726 is gratefully acknowledged.}
 \author{James M. Scott and Tadele Mengesha}
\address[James M. Scott]{Department of Mathematics,
University of Pittsburgh}
\address[Tadele Mengesha]{Department of Mathematics,
University of Tennessee Knoxville, mengesha@utk.edu}
\begin{document}

\maketitle

\begin{abstract}

We prove higher Sobolev regularity for bounded weak solutions to a class of nonlinear nonlocal integro-differential equations. The leading operator exhibits nonuniform growth, switching between two different fractional elliptic ``phases" that are determined by the zero set of a modulating coefficient. Solutions are shown to improve both in integrability and differentiability. These results apply to operators with rough kernels and modulating coefficients.
To obtain these results we adapt a particular fractional version of the Gehring lemma developed by Kuusi, Mingione, and Sire in their work ``Nonlocal self-improving properties"  Anal. PDE, 8(1):57--114 for the specific nonlinear setting under investigation in this manuscript.
\end{abstract}

\section{Introduction and Main Results}

We are interested in studying regularity properties of weak solutions $u$ to 
\begin{equation}\label{eq:Intro:MainEqn}
\cL u (x) = f(x)\,,
\end{equation}
where for measurable functions $u: \bbR^n \to \bbR$ and for $x \in \bbR^n$ the nonlocal \textit{double phase} operator $\cL$ is defined as
\begin{equation*}
\cL u(x) := \pv \intdm{\bbR^n}{\frac{|u(x)-u(y)|^{p-2}}{|x-y|^{n+sp}} (u(x)-u(y)) + a(x,y) \frac{|u(x)-u(y)|^{q-2}}{|x-y|^{n+tq}} (u(x)-u(y))}{y}\,.
\end{equation*}
Throughout, we assume $n \geq 2$ and the integrability indices $p$, $q$ belong to $(1,\infty)$ with $p \leq q$ and differentiability indices $s$, $t$ belong to $(0,1)$. The abbreviation $\pv$ stands for principal value.
For functions $u$ is smooth enough the operator $\cL$ can be thought of as the sum of a fractional $p$-Laplacian $(-\Delta)^s_p$ and an integro-differential operator whose kernel of differentiability order $t$ and integrability order $q$ is perturbed by the modulating coefficient $a(\cdot,\cdot)$. The order of the operator $\cL$ therefore switches between the fractional elliptic phases $(s,p)$ and $(t,q)$ according to the zero-set of $a(\cdot,\cdot)$.

The operator $\cL$ is a nonlocal analogue of a class of double phase operators of which a prototypical example is given by
\begin{equation*}
\div \left( |\grad u|^{p-2} \grad u) + \alpha(x) |\grad u|^{q-2} \grad u \right) = f\,, \qquad 1 \leq p \leq q\,, \qquad 0 \leq \alpha \leq M < \infty\,.
\end{equation*}
Partial differential equations of the above type arise in the theory of homogenization and elasticity \cite{zhikov1987averaging, Zhikov1995OnLP}. In the event the modulating coefficient $\alpha$ is a positive constant such non-elliptic functionals associated to these operators that exhibit similar $(p,q)$-growth have been treated in the celebrated work of Marcellini \cite{marcellini1993regularity, marcellini1996everywhere, marcellini1991regularity, marcellini1989regularity}. In more recent years there have been considerable efforts to study the regularity of minimizers of $(p,q)$-growth functionals whose integrand depends on $x$ in a possibly non-smooth manner. The functional associated to the above operator switches between $p$-growth on the set $\{a = 0\}$ and $q$-growth on the set $\{\alpha > 0\}$, behavior that warrants the development of novel techniques to investigate regularity. The first such set of results by Colombo and Mingione \cite{colombo2015regularity} describes - among other accomplishments - higher Lebesgue integrability of the functional's integrand under two fundamental assumptions: the H\"older continuity of the modulating coefficient $\alpha$ and the control of the ratio $q/p$ by a bound depending only on the dimension and the H\"older exponent of $\alpha$. 

Since the publication of \cite{colombo2015regularity} the theory of double-phase problems has been substantially expanded with connections to other areas; a comprehensive discussion is beyond the scope of this paper but we mention for instance \cite{pucci2018maximum, ok2018partial, byun2020calderon, byun2018riesz, de2018regularity, de2019regularity, byun2017global, baroni2018regularity, colombo2016calderon, baroni2015harnack, colombo2015bounded, DeFilippis-Mingione-Boarder} and the references they contain.
One such connection is to the regularity theory of fractional elliptic operators. The operator $\cL$ is the archetype of a class of nonlocal double phase operators first introduced in \cite{de2019holder}, in which the H\"older continuity of bounded viscosity solutions to $\cL u = f$ with bounded data $f$ was obtained. 
In this work we show regularity of solutions on a different scale; that under suitable assumptions on the data $f$, the modulating coefficient $a(\cdot,\cdot)$, and a certain ratio of integrability and differentiability exponents solutions $u$ to $\cL u = f$ exhibit a self-improvement property. Precisely, distributional solutions $u$ belonging to the fractional Sobolev space $W^{s,p}(\bbR^n)$ in fact belong to a Sobolev space with higher exponents of integrability and differentiability.

We assume that the modulating coefficient $a$ is measurable, and satisfies
\begin{equation}\label{Assumption:Intro:Coeff}
a(x,y) \in L^{\infty}(\bbR^{2n})\,, \qquad 0 \leq a(x,y) \leq M\,, \qquad a(x,y) = a(y,x)\,.
\tag{A1}
\end{equation}
In the case $a \equiv 0$ the operator $\cL$ reduces to the fractional $p$-Laplacian $(-\Delta)^s_p$. The regularity theory for the fractional $p$-Laplacian is quite extensive, and we refer the reader to \cite{schikorra2016nonlinear, brasco2018higher, brasco2017higher, palatucci2018dirichlet} and the references therein. One consequence of this article is the higher integrability of weak solutions to nonlocal degenerate elliptic equations of fractional $p$-Laplacian type with measurable coefficients; see Theorem \ref{thm:pLaplace}.

For this work we also require that 
\begin{equation}\label{Assumption:Intro:Exp}
p \leq q\,, \quad t \leq s\,, \qquad \frac{1}{p'} \leq \frac{tq}{sp} \leq 1\,, 
\tag{A2}
\end{equation}
where $p'$ is the H\"older conjugate of $p$: ${1\over p}+ {1\over p'}=1.$ 
Since solutions $u$ are assumed to only belong to $W^{s,p}$ the upper bound on $tq/sp$ therefore prevents the $tq$ term in the integrand of $\cL$ from becoming nonintegrable. The lower bound assumption effectively prevents the singularity in the integrand of $\cL$ from becoming too weak at infinity, so that the nonlocal tails can be controlled. This is in contrast to the local theory, in which the key constraint on the ratio $q/p$ is prescribed only from above. 
Additionally we will restrict ourselves to the case
\begin{equation}\label{Assumption:Intro:Exp2}
s p < n\,,
\tag{A3}
\end{equation}
because when $sp> n$ solutions will automatically belong to the H\"older class $C^{0,s-n/p}(\bbR^n)$ by Sobolev embedding for $sp > n$ regardless of the integrability conditions on the data $f$. See Remark \ref{rmk:ConditionsOnExponents} below for a further discussion on the natural character of these assumptions.

We aim to show higher differentiability and integrability of bounded solutions $u \in W^{s,p}(\bbR^n)$ to a weak formulation of the equation \eqref{eq:Intro:MainEqn}, that is
\begin{equation}\label{eq:Intro:WeakFormulation}
\cE(u,\varphi) = \intdm{\bbR^n}{f(x) \varphi(x)}{x}\,, \qquad \text{ for any } \varphi \in C^{\infty}_c(\bbR^n)\,,
\end{equation}
where the form $\cE(u,\varphi)$ is defined as
\begin{equation}\label{eq:Intro:WeakFormDefn}
\begin{split}
\cE(u,\varphi):= \iintdm{\bbR^n}{\bbR^n}{&\frac{|u(x)-u(y)|^{p-2}}{|x-y|^{n+sp}} (u(x)-u(y)) (\varphi(x)-\varphi(y)) \\
	&\quad + a(x,y) \frac{|u(x)-u(y)|^{q-2}}{|x-y|^{n+tq}} (u(x)-u(y))(\varphi(x)-\varphi(y))}{y}{x}\,.
\end{split}
\end{equation}
%
%
%
%
%
%
In order to prove such a result for solutions to \eqref{eq:Intro:WeakFormulation} we assume the data $f$ belongs to a Lebesgue space with sufficiently high exponent. We assume that for a given $\delta_0 > 0$
\begin{equation*}
f \in L^{p_{*_s}+\delta_0}_{loc}(\bbR^n)\,,
\end{equation*}
where we are using standard notation for H\"older and Sobolev exponents; that is, for any $r \in (1,\infty)$ and any $\sigma \in (0,1)$ we write
\begin{equation*}
r' = \frac{r}{r-1}\,, \qquad r^{*} = r^{*_{\sigma}} = \frac{n r}{n-\sigma r}\,, \qquad r_* = r_{*_{\sigma}} = \frac{n r'}{n+\sigma r'} = (r^*)'\,.
\end{equation*}
(The dependence of the embedding exponents on $\sigma$ will be suppressed whenever it is clear from context.) The integrability assumption on $f$ is a natural counterpart of the corresponding assumption necessary to prove higher integrability results for minimizers of energies associated to the local $p$-Laplacian; see for instance \cite[Chapter V, Section 3]{giaquinta1983multiple}.


Weak solutions $u$ are assumed to be \textit{a priori} bounded, a point clarified by the following definition:
\begin{definition}
A function $u \in W^{s,p}(\bbR^n) \cap L^{\infty}(\bbR^n)$ is a \textit{bounded weak solution} to \eqref{eq:Intro:MainEqn} with data $f$ if the nonlocal double phase energy $\cE(u,u) < \infty$ and if $u$ satisfies \eqref{eq:Intro:WeakFormulation}.
\end{definition}
We will show an ``intrinsic" higher differentiability and higher integrability for bounded weak solutions to \eqref{eq:Intro:MainEqn}. Precisely, if we denote the integrand of $\cE(u,u)$ by $P(x,y,u)$ so that \begin{equation*}
\cE(u,u) = \iintdm{\bbR^n}{\bbR^n}{P(x,y,u)}{y}{x}\,,
\end{equation*}
then by definition of $u$ as a bounded weak solution the function $P(\cdot,\cdot,u)$ belongs to $L^1(\bbR^{2n})$.
We are able to prove the following theorem concerning $P$, which constitutes the main result of this paper:

\begin{theorem}\label{thm:Intro:MainThm}
	Let $p$, $q \geq 2$ and $s$, $t \in (0,1)$ satisfy \eqref{Assumption:Intro:Exp}-\eqref{Assumption:Intro:Exp2} and let $a(x,y)$ satisfy \eqref{Assumption:Intro:Coeff}. Fix $\delta_0 > 0$, and let $f \in L^{p_{*_s} + \delta_0}_{loc}(\bbR^n)$. Let $u \in W^{s,p}(\bbR^n) \cap L^{\infty}(\bbR^n)$ be any bounded weak solution to \eqref{eq:Intro:MainEqn} with data $f$. Then there exists $\veps_0 \in (0,1)$ depending only on $n$, $p$, $q$, $s$, $t$, $M$, $\delta_0$ and $\Vnorm{u}_{L^{\infty}(\bbR^n)}$ such that for every $\tau \in (0,\veps_0)$
	\begin{equation*}
	P(\cdot,\cdot,u) \in L^{1+\tau}_{loc}(\bbR^{2n})\,.
	\end{equation*}
	In particular, there exist positive constants $\veps_1$ and $\veps_2$ such that $u \in W^{s+\veps_1, p+\veps_2}_{loc}(\bbR^n)$, and if $(s+\veps_1)(p+\veps_2) > n$ then $u$ is locally H\"older continuous.
\end{theorem}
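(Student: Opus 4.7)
The plan is to derive a reverse Hölder-type inequality for the double-phase energy density $P(x,y,u)$ on product balls $B\times B \subset \bbR^{2n}$ and then run a Gehring-type self-improvement argument adapted to the fractional setting in the spirit of Kuusi, Mingione, and Sire. Concretely, for a ball $B_R$ and a dilation factor $\lambda>1$, I would aim to establish a schematic estimate
\begin{equation*}
\frac{1}{|B_R|^2}\iint_{B_R\times B_R} P(x,y,u)\,dy\,dx \leq C\left(\frac{1}{|\lambda B_R|^2}\iint_{\lambda B_R\times\lambda B_R} P(x,y,u)^{\theta}\,dy\,dx\right)^{1/\theta} + \mathrm{Tail}(u;\lambda B_R) + \mathrm{Data}(f;\lambda B_R)
\end{equation*}
for some $\theta<1$, after which a fractional Gehring lemma delivers $P(\cdot,\cdot,u)\in L^{1+\tau}_{loc}(\bbR^{2n})$ for some small $\tau>0$.

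The first step is a double-phase Caccioppoli estimate obtained by testing the weak formulation \eqref{eq:Intro:WeakFormulation} with $\varphi = \eta^{q}(u-c)$ for a cutoff $\eta$ supported in $\lambda B_R$ with $\eta\equiv 1$ on $B_R$, and $c$ a suitable constant approximation of $u$ on $\lambda B_R$. This should yield control of both the $(s,p)$- and $(t,q)$-integrands of $P$ on $B_R\times B_R$ by Lebesgue norms of $(u-c)$ on $\lambda B_R$ weighted against powers of $|\eta(x)-\eta(y)|/|x-y|$, plus nonlocal tail terms. Hypothesis \eqref{Assumption:Intro:Coeff} and the $L^\infty$ bound on $u$ keep the two phases compatible; the lower bound $1/p'\leq tq/(sp)$ from \eqref{Assumption:Intro:Exp} makes the infinite tails summable, while the upper bound $tq/(sp)\leq 1$ lets the $(t,q)$-integrand be dominated, after a Young/convexity estimate exploiting $\|u\|_{L^{\infty}(\bbR^n)}$, by a multiple of the $(s,p)$-integrand. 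I would then upgrade the Caccioppoli bound to reverse Hölder form by applying the fractional Sobolev–Poincaré inequality on $\lambda B_R$ (valid thanks to \eqref{Assumption:Intro:Exp2}), turning the oscillation norm of $u-c$ into an $L^{\theta}$ norm of $P$ with $\theta<1$. The data contribution is handled via Hölder's inequality using $f\in L^{p_{\ast_s}+\delta_0}_{loc}$, where $\delta_0>0$ provides the crucial extra integrability margin demanded by the Gehring argument.

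Once the reverse Hölder inequality is in place, the Gehring self-improvement is carried out by a Calderón–Zygmund stopping-time argument on diagonal balls $B\times B$ in $\bbR^{2n}$, together with a separate level-set analysis for off-diagonal pieces $B\times(\lambda B\setminus B)$ using the nonlocal tail estimate. The resulting inclusion $P(\cdot,\cdot,u)\in L^{1+\tau}_{loc}(\bbR^{2n})$ transfers directly to the $(s,p)$-seminorm: the bound $|u(x)-u(y)|^p / |x-y|^{n+sp}\in L^{1+\tau}_{loc}$ is, after rearranging exponents, equivalent to $u\in W^{s+\varepsilon_1,\,p+\varepsilon_2}_{loc}(\bbR^n)$ with $\varepsilon_2 = p\tau$ and $\varepsilon_1 = n\tau/(p(1+\tau))$, and local Hölder continuity in the regime $(s+\varepsilon_1)(p+\varepsilon_2)>n$ follows from the fractional Morrey embedding. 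The main obstacle I anticipate is closing the reverse Hölder inequality uniformly across the two phases when $a$ is merely measurable and potentially degenerate: the Gehring machinery prefers a single scale-invariant energy density, while $P$ is the sum of two densities with genuinely different scalings $(s,p)$ and $(t,q)$. Balancing the nonlocal tails of both phases simultaneously against the exponent conditions \eqref{Assumption:Intro:Exp}-\eqref{Assumption:Intro:Exp2} and the data integrability $\delta_0$, without eroding the self-improvement exponent $\tau$ in the process, is where the technical heart of the argument lies.
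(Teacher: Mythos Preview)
Your outline is correct and follows the paper's approach: a Caccioppoli inequality tested with $\varphi^q(u-(u)_B)$, a reverse H\"older inequality with nonlocal tail, and a fractional Gehring lemma in the spirit of Kuusi--Mingione--Sire, from which the Sobolev gain is read off. Three implementation points the paper relies on that your sketch leaves implicit: (i) the reverse H\"older and Gehring steps are not run on $P$ directly but on the dual pair $(U,\nu)$ with $\rmd\nu=|x-y|^{-(n-\veps p)}\,\rmd x\,\rmd y$ and $U(x,y)=|u(x)-u(y)|/|x-y|^{s+\veps}$, so that one works over a genuinely doubling measure on $\bbR^{2n}$; (ii) the nonlocal tail in the reverse H\"older is an infinite series $\sum_{k\ge 0}\alpha_k\big(\fint_{2^k\cB}U^{\eta}\,\rmd\nu\big)^{1/\eta}$ rather than a single term on $\lambda B_R$, and the free parameter $\sigma$ in front of this series is exactly what allows it to be absorbed in the level-set estimate; (iii) the Gehring output is $G\in L^{1+\frac{p-1}{p}\delta}_{\mathrm{loc}}(\nu)$, i.e.\ $P^{1+\frac{p-1}{p}\delta}|x-y|^{\frac{p-1}{p}\delta(n-\veps)}\in L^1_{\mathrm{loc}}$, and the passage to unweighted $P\in L^{1+\tau}_{\mathrm{loc}}(\bbR^{2n})$ uses a Muckenhoupt $A_p$ reverse H\"older property of the weight $|x-y|^{\frac{p-1}{p}\delta(n-\veps)}$.
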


 Explicit estimates on the constant $\veps_0$ can be obtained by tracing the dependencies through the proofs.
We work exclusively in the superquadratic case $p \in [2,\infty)$.
Note that Theorem \ref{thm:Intro:MainThm} does not treat the degenerate case $p$ and/or $q \in (1,2)$; this will be investigated in a future work.

\begin{remark}\label{rmk:ConditionsOnExponents}

Our assumption that $t \leq s$ can be thought of as imposing smoothness on the modulating coefficient. To see this, we write the integrand of $\cE(u,u)$ as
\begin{equation*}
\frac{|u(x)-u(y)|^p}{|x-y|^{n + s p}} + a(x,y)|x-y|^{(s-t)q} \frac{|u(x)-u(y)|^q}{|x-y|^{n + s q}}
\end{equation*}
The operator $\cL$ can therefore be read as the sum of a fractional $p$-Laplacian and a fractional $q$-Laplacian, both of differentiability order $s$, with the second operator perturbed by a coefficient $\wt{a} := a(x,y)|x-y|^{(s-t)q}$. Thus if $t < s$ then $\wt{a} \to 0$ as $|x-y| \to 0$. This ``uniform continuity" of $\wt{a}$ on the diagonal $x=y$ is in some sense a nonlocal analogue of the H\"older continuity condition on the modulating coefficient in local double-phase equations.
In fact, we can recast the upper bound in \eqref{Assumption:Intro:Exp} as
\begin{equation*}
q \leq p + \frac{(s-t)q}{s}\,.
\end{equation*}
In the context of proving regularity for \textit{a priori} bounded solutions, this is precisely the nonlocal analogue of the sharp condition $q \leq p + \beta$ in the local case, where $\beta$ is the H\"older continuity exponent of the modulating coefficent; see \cite{colombo2015bounded}.
\end{remark}

\subsection{Strategy of Proof}
To prove Theorem \ref{thm:Intro:MainThm} we use an argument developed by Kuusi, Mingione and Sire announced in \cite{kuusi2014fractional} and presented in \cite{kuusi2015} that builds a nonlocal fractional Gehring lemma in order to prove a self-improvement result for solutions to a class of monotone operators with quadratic growth related to the fractional Laplacian.
The arguments here are heavily based on the work and presentation done for the case $p=2$ in \cite{kuusi2015}. While it is apparent from a careful reading of that work that their methods apply to functionals with more general $p$-growth, the precise treatment of such classes of operators does not appear in the literature. Since we are further working with operators of mixed $(p,q)$ growth, we have included generalizations of the nonlocal reverse H\"older's inequality and fractional Gehring lemma that suit our context at the risk of repeating some arguments from \cite{kuusi2015}. We write the arguments of \cite{kuusi2015} for a general exponent $p$ so that the robustness  of their technique and as well as results can be clearly seen as applicable in a wealth of contexts. One such instances is, for example, this approach also extends to vector-valued solutions of nonlocal systems. A specific example is the strongly coupled system of nonlinear equations studied in \cite{Mengesha-Scott-Korn-in-Domain}. 

The paper \cite{kuusi2015} additionally considers the nonlocal generalization of the equation $\div (\bbA(\bx) \grad u) = \div \wt{f} + f$. We do not consider such a nonlocal divergence term on the right-hand side of the equation. Our results in the case $p = 2$ and $a \equiv 0$ therefore coincide with the results of \cite{kuusi2015} in the case when their data $g \equiv 0$.

Following the structure introduced in \cite{kuusi2015}, we define \textit{dual pairs} of measures and functions $(U,\nu)$. For small $\veps \in (0,1/p)$  we define the locally finite doubling Borel measure in $\bbR^{2n}$
\begin{equation}\label{eq:MeasureDefn}
\nu(A) := \int_{A} \frac{\rmd x \, \rmd y}{|x-y|^{n-\veps p}}\,, \qquad A \subset \bbR^{2n} \text{ measurable}\,,
\end{equation}
and we define the function
\begin{equation}
U(x,y) := \frac{|u(x)-u(y)|}{|x-y|^{s+\veps}}\,.
\end{equation}
It is then clear that 
\begin{equation*}
u \in W^{s,p}(\bbR^n) \qquad \text{ if and only if } \qquad U \in L^p(\bbR^{2n}; \nu)\,.
\end{equation*}
The integrand $P(x,y,u)$ of the energy $\cE(u,u)$ can be expressed in terms of $U$ as 
\begin{equation}\label{eq:UDefn:A}
[U^p + A(x,y) U^q]|x-y|^{-n+\epsilon p},\quad \text{where $A(x,y) := a(x,y) |x-y|^{(s-t)q + \veps(q-p)}$. }
\end{equation}
%
%
We can therefore write the double phase energy $\cE(u.u)$ in terms the dual pair as 
\begin{equation}\label{eq:UDefn:Energy}
\cE(u,u) = \intdm{\bbR^{2n}}{(U^p + A(x,y) U^q)}{\nu}=:\intdm{\bbR^{2n}}{G(x,y,U)}{\nu} 
\end{equation}
where the integrand $G(x,y,U):=U^p + A(x,y) U^q$. 
Then it now becomes clear that 
\begin{equation*}
P(\cdot,\cdot,u) \in L^1(\bbR^{2n}) \quad \text{ if and only if } \quad G(\cdot,\cdot,U) \in L^1(\bbR^{2n}; \nu)\,.
\end{equation*}

\begin{theorem}[Higher Regularity Result]\label{thm:Setup:MainRegularityResult}
	With all the assumptions of Theorem \ref{thm:Intro:MainThm}, there exists $\veps_0 > 0$ depending only on $\texttt{data}$ such that for every $\delta \in (0,\veps_0)$ we have
	\begin{equation}\label{eq:Setup:HigherRegForU}
	G(x,y,U) \in L^{1 + \delta}_{loc}(\bbR^{2d}; \nu)\,.
	\end{equation}
	where $\texttt{data} $ represents  $n,p,q,s,t,M,$ and $\Vnorm{u}_{L^{\infty}}$.
\end{theorem}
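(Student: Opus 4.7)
The plan is to follow the Kuusi--Mingione--Sire scheme adapted to the double-phase setting. The core of the argument is to establish a nonlocal reverse H\"older inequality for the integrand $G(x,y,U)$ on suitably chosen balls (in the ``diagonal'' product geometry of $\bbR^{2n}$) and then apply a fractional Gehring-type lemma with respect to the doubling measure $\nu$. Once the reverse H\"older inequality is in hand on all scales up to the nonlocal tail correction, the Gehring lemma promotes $L^1_{loc}$-integrability of $G(\cdot,\cdot,U)$ to $L^{1+\delta}_{loc}$-integrability for some $\delta \in (0,\veps_0)$ depending only on $\texttt{data}$.

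The first main step is to derive a \emph{Caccioppoli-type estimate}: for a ball $B_r(x_0) \subset \bbR^n$ and a cutoff $\eta \in C^\infty_c(B_{r})$, one tests \eqref{eq:Intro:WeakFormulation} with $\varphi = (u - c)\eta^q$ for a carefully chosen constant $c$ (typically a weighted average of $u$) and uses the algebraic monotonicity of the $p$- and $q$-Laplacian kernels. The standard splittings $u(x)-u(y) = [u(x)-c]-[u(y)-c]$ and $\eta^q(x)-\eta^q(y)$ handled via Young's inequality adapted to the two growth scales $p$ and $q$ give an interior part controlled by lower-order norms of $u-c$ plus the \emph{tail term} for the solution: specifically a truncated energy of $u$ outside $B_r$ weighted against the kernels. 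The boundedness hypothesis $u \in L^\infty$ enters here decisively in taming the $q$-term, since $|u(x)-u(y)|^{q-p}$ can be bounded by $(2\Vnorm{u}_{L^\infty})^{q-p}$ and absorbed into the coefficient $A(x,y)$. This is precisely the point where the conditions \eqref{Assumption:Intro:Exp} on the ratio $tq/sp$ are needed, since the resulting effective coefficient $A(x,y) = a(x,y)|x-y|^{(s-t)q + \veps(q-p)}$ must remain bounded and the singular kernel $|x-y|^{-n+\veps p}$ must retain integrability on product balls.

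The next step is to rephrase the Caccioppoli estimate as a \emph{reverse H\"older inequality on diagonal superlevel sets} in $\bbR^{2n}$ with respect to $\nu$: roughly,
\begin{equation*}
\Xint-_{\cB} G(x,y,U)\, \rmd\nu
\leq C \left( \Xint-_{\sigma \cB} G(x,y,U)^{\theta}\, \rmd\nu \right)^{1/\theta} + \text{(tail and data terms)}\,,
\end{equation*}
for some $\theta < 1$ and dilation factor $\sigma > 1$, where $\cB$ is a product ball $B \times B$ in $\bbR^{2n}$. Here the lower exponent $\theta$ comes from applying fractional Sobolev--Poincar\'e inequalities to absorb the $W^{s,p}$-seminorm of $u-c$ into the Gagliardo $L^{p_*}$-integral; the $\delta_0$ in the hypothesis $f\in L^{p_{*_s}+\delta_0}_{loc}$ is used precisely to ensure the data contribution enters as a \emph{sub-unit} power on the right-hand side, which is crucial for the Gehring iteration. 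The tail terms must be bundled into a uniformly controlled, nonlocal ``excess'' so that they do not destroy the self-improvement; this is handled by the lower bound $tq/sp \geq 1/p'$, which guarantees convergence of the tail integrals at infinity after normalization.

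The third and final step is to invoke the fractional Gehring lemma with respect to the doubling measure $\nu$ on $\bbR^{2n}$ (a generalized version of the one in \cite{kuusi2015}, adjusted so that both $U^p$ and $A(x,y)U^q$ are upgraded simultaneously). The main obstacle, as I see it, is the simultaneous treatment of the two phases in the Caccioppoli step: since solutions lie only in $W^{s,p}$, the $q$-phase piece has no direct Sobolev--Poincar\'e estimate at its own scale, so one must exploit the boundedness of $u$ together with the localization of $A(x,y)$ to reduce the $q$-phase contribution to a weighted $p$-phase quantity on the same footing as the leading term. A second, more technical obstacle is the propagation of the tail terms through the Gehring iteration: one must verify that the tail of $u$ in $W^{s,p}$ outside a ball produces a contribution that remains a lower-order perturbation throughout the exponent-improving iteration. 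Once these are in hand, the conclusion \eqref{eq:Setup:HigherRegForU} follows with $\veps_0$ determined quantitatively by the reverse H\"older constants.
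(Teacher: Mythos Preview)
Your proposal is correct and follows essentially the same route as the paper: a Caccioppoli estimate obtained by testing with $(u-(u)_B)\varphi^q$ (with the $L^\infty$ bound used exactly as you describe to reduce the $q$-phase to a weighted $p$-phase quantity), then a diagonal reverse H\"older inequality with nonlocal tails (the paper's Proposition~\ref{prop:RevHolder} and Corollary~\ref{cor:RevHolderFinal}), and finally the fractional Gehring lemma of \cite{kuusi2015} adapted to the dual pair $(G,\nu)$. The only cosmetic difference is that the paper passes to the auxiliary function $H=G^{(p-1)/p}$ so that the reverse H\"older inequality takes the standard form $\big(\fint H^{p'}\big)^{1/p'}\lesssim\big(\fint H^{\gamma}\big)^{1/\gamma}+\text{tails}+\text{data}$ with $\gamma<p'$, which is equivalent to your formulation in terms of $G^{\theta}$ with $\theta<1$.
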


Theorem \ref{thm:Intro:MainThm} is a simple consequence of the above theorem. 
We will show \eqref{eq:Setup:HigherRegForU} directly,
and its proof relies on a reverse H\"older's inequality applied to the dual pair of function and measure $(G,\nu)$. The first step towards this is a suitable Caccioppoli-type inequality for $G$; see Theorem \ref{thm:CaccioppoliEstimate}. This inequality in turn relies on using the solution $u$ itself as an admissible test function, which is not so clear ahead of time, but possible to show that is indeed the case using an argument adapted from \cite{byun2017global} for our nonlocal context; see Theorem \ref{thm:AdmissibleTestFxn}. From this we derive a reverse H\"older inequality for $G = U^p + AU^q$ (Proposition \ref{prop:RevHolder}) involving a nonlocal tail; e.g.
\begin{equation*}
	\left( \fint_{B \times B} G \, \rmd \nu \right)^{1/p} \precsim \sum_{k=0}^{\infty} 
	\big( 2^{-k (\frac{sp}{p-1} -s - \veps)} +  2^{-k (\frac{tq}{p-1} -s - \veps)} \big) 
	\left( \fint_{2^k B \times 2^k B} U^{\eta} \, \rmd \nu \right)^{1/\eta} + \text{ term depending on } f
\end{equation*}
for every ball $B \subset \bbR^n$ and for some $\eta < p$.
This inequality holds only for diagonal sets of the type $B \times B \subset \bbR^{2n}$, and is insufficient to apply tools traditionally used to prove Gehring's lemma such as the maximal function. Nevertheless, Kuusi, Mingione, and Sire in \cite{kuusi2015} used a novel localization technique to show that the  reverse H\"older inequalities over diagonal ball is sufficient to prove a special fractional version of Gehring's lemma that is applicable for dual pairs of the above type. We will adapt this localization technique to our setting; see Section \ref{sec:GehringLemma}. Arguments with content very similar to that of \cite{kuusi2015} are left out of this work and presented in the companion note \cite{Scott-Mengesha-Levelset} for the sake of completeness.  We additionally refer to the original discussions and summaries of the technique in \cite{kuusi2015, kuusi2014fractional}.

\subsection{Consequences and Generalizations}

To streamline the presentation we present in this paper the proofs written only for the archetypal operator $\cL$. However, the real strength of these techniques become evident when considering a much wider class of operators. 
For example, define the form
\begin{equation*}
\begin{split}
\cE_{\phi_p, \phi_q, K_{sp}, K_{tq}}(u,\varphi) := \iintdm{\bbR^n}{\bbR^n}{& K_{sp}(x,y) \phi_p(u(x)-u(y)) (\varphi(x)-\varphi(y)) \\
	& \qquad   + a(x,y) K_{tq}(x,y) \phi_q(u(x)-u(y)) (\varphi(x)-\varphi(y))}{y}{x}\,,
\end{split}
\end{equation*}
where the kernels $K_{sp}$ and $K_{tq}$ are merely measurable and satisfy for ellipticity constants $\Lambda^-$ and $\Lambda^+$
%
%
\begin{equation}\label{eq:Intro:ConditionsOnGenKernels}
\begin{array}{l}
\Lambda^- |x-y|^{-n-sp} \leq K_{sp}(x,y) \leq  \Lambda^+ |x-y|^{-n-sp}\,, \\  \Lambda^- |x-y|^{-n-tq}  \leq K_{tq}(x,y) \leq \Lambda^+ |x-y|^{-n-tq}\,,
\end{array}\qquad
0 < \Lambda^- \leq \Lambda^+ < \infty\,.
\end{equation}
The measurable and monotone functions $\phi_p : \bbR \to \bbR$ and $\phi_q : \bbR \to \bbR$ satisfy
\begin{equation}\label{eq:Intro:ConditionsOnGenMonotons}
|\phi_r(z)| \leq \Lambda^+ |z|^{r-1}\,, \quad \phi_r(z)z \geq |z|^r\,, \qquad \text{ for all } z \in \bbR\,, \quad  r \in \{p, q \}\,.
\end{equation}
Then our results hold for solutions $u$ in $W^{s,p}(\bbR^n) \cap L^{\infty}(\bbR^n)$ to
\begin{equation}\label{eq:Intro:WeakFormGeneralOp}
\cE_{\phi_p, \phi_q, K_{sp}, K_{tq}}(u,\varphi) = \intdm{\bbR^n}{f(x) \varphi(x)}{x}\,, \text{for all $\phi\in C_c^{\infty}(\mathbb{R}^{n})$}. 
\end{equation}
To be precise we state the following theorem.
\begin{theorem}
Let $p$, $q \geq 2$ and $s$, $t \in (0,1)$ satisfy \eqref{Assumption:Intro:Exp}-\eqref{Assumption:Intro:Exp2} and let $a(x,y)$ satisfy \eqref{Assumption:Intro:Coeff}. Fix $\delta_0 > 0$, and let $f \in L^{p_* + \delta_0}_{loc}(\bbR^n)$. Let $u \in W^{s,p}(\bbR^n) \cap L^{\infty}(\bbR^n)$ be any bounded weak solution to \eqref{eq:Intro:WeakFormGeneralOp} with data $f$. Then there exists $\veps_0 \in (0,1)$ depending only on the ellipticity constants $\Lambda^+$ and $\Lambda^-$, $n$, $p$, $q$, $s$, $t$, $M$, $\delta_0$ and $\Vnorm{u}_{L^{\infty}(\bbR^n)}$ such that for every $\delta \in (0,\veps_0)$
\begin{equation*}
P_{\phi,K}(\cdot,\cdot,u) \in L^{1+\delta}_{loc}(\bbR^{2n})\,,
\end{equation*}
where $P_{\phi,K}(x,y,u)$ is the integrand of $\cE_{\phi_p, \phi_q, K_{sp}, K_{tq}}(u,u)$. In particular, there exist positive constants $\veps_1$ and $\veps_2$ such that $u \in W^{s+\veps_1, p+\veps_2}_{loc}(\bbR^n)$.
\end{theorem}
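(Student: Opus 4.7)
The plan is to inspect the proof of Theorem \ref{thm:Intro:MainThm} and verify that each intermediate step survives when the model objects $|x-y|^{-n-sp}$, $|x-y|^{-n-tq}$, and $|z|^{r-2}z$ are replaced by $K_{sp}$, $K_{tq}$, $\phi_p$, and $\phi_q$ satisfying only the structural bounds \eqref{eq:Intro:ConditionsOnGenKernels}--\eqref{eq:Intro:ConditionsOnGenMonotons}. The crucial observation is that after symmetrizing the form in $(x,y)$, the integrand $P_{\phi,K}(x,y,u)$ is pointwise comparable, up to factors depending only on $\Lambda^\pm$, to the model integrand $P(x,y,u)$; equivalently, the dual-pair function $G_{\phi,K}(x,y,U)$ associated with $\cE_{\phi_p,\phi_q,K_{sp},K_{tq}}$ satisfies a two-sided pointwise comparison with $G(x,y,U) = U^p + A(x,y)U^q$ with respect to the measure $\nu$ from \eqref{eq:MeasureDefn}.

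The two places where this replacement needs careful verification are the admissibility of $u$ as a test function (Theorem \ref{thm:AdmissibleTestFxn}) and the Caccioppoli-type inequality (Theorem \ref{thm:CaccioppoliEstimate}). For admissibility, the truncation and approximation scheme uses only the nonnegativity of $\phi_r(u(x)-u(y))(u(x)-u(y))$, which holds by monotonicity, together with the upper bound $|\phi_r(z)| \leq \Lambda^+|z|^{r-1}$ to control tail terms and to pass to the limit by dominated convergence. For the Caccioppoli estimate, the standard algebraic splitting of $\phi_r(u(x)-u(y))(\varphi(x)-\varphi(y))$ into diagonal and off-diagonal parts is combined with (i) the coercivity $\phi_r(z)z \geq |z|^r$ to produce the $W^{s,p}$ and $W^{t,q}$ seminorm contributions on the left-hand side and (ii) the upper bound $|\phi_r(z)| \leq \Lambda^+|z|^{r-1}$ with Young's inequality to absorb the mixed terms. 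In both proofs every appearance of the model kernel or nonlinearity is absorbed into a multiplicative constant depending on $\Lambda^\pm$.

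Once the Caccioppoli estimate is established, the nonlocal reverse H\"older inequality for the dual pair $(G_{\phi,K},\nu)$ (the analog of Proposition \ref{prop:RevHolder}) follows by the same chain of Sobolev--Poincar\'e embeddings, which operate on $U$ and $\nu$ and are insensitive to the particular form of $G_{\phi,K}$ beyond the comparison $G_{\phi,K} \simeq U^p + AU^q$. The fractional Gehring lemma of Section \ref{sec:GehringLemma} is then applied as a black box: its hypotheses concern only the measure-theoretic behavior of the dual pair, not the specific PDE that produced it. This yields the higher integrability \eqref{eq:Setup:HigherRegForU} for $G_{\phi,K}$, which transfers to $P_{\phi,K}$ via the pointwise comparison, and the embedding $u \in W^{s+\veps_1,p+\veps_2}_{loc}(\bbR^n)$ is read off exactly as in the archetypal case.

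The main obstacle will be the careful bookkeeping required to ensure that no intermediate estimate in the proof of Theorem \ref{thm:Intro:MainThm} covertly uses identities specific to $|z|^{r-2}z$---for example, the strong monotonicity inequality $(|a|^{r-2}a-|b|^{r-2}b)(a-b) \geq c_r|a-b|^r$ valid for $r \geq 2$, or explicit algebraic manipulations of power functions. Wherever such identities appear, they must be replaced by the general monotonicity $(\phi_r(a)-\phi_r(b))(a-b) \geq 0$ combined with the two-sided bounds \eqref{eq:Intro:ConditionsOnGenMonotons}; in the superquadratic regime $p,q \geq 2$ assumed here this substitution is known to be sufficient and incurs only a multiplicative constant in the final estimates, so that the conclusion reads verbatim as in Theorem \ref{thm:Intro:MainThm} with $\veps_0$ now depending additionally on $\Lambda^\pm$.
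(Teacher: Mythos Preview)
Your proposal is correct and matches the paper's approach. The paper does not give this theorem a separate proof: it is stated immediately after the sentence ``To streamline the presentation we present in this paper the proofs written only for the archetypal operator $\cL$,'' and the implicit claim is exactly the one you make---that the structural bounds \eqref{eq:Intro:ConditionsOnGenKernels}--\eqref{eq:Intro:ConditionsOnGenMonotons} render $P_{\phi,K}$ pointwise comparable to the model $P$, so that Theorems \ref{thm:AdmissibleTestFxn} and \ref{thm:CaccioppoliEstimate}, Proposition \ref{prop:RevHolder}, and the Gehring machinery all go through with constants now depending additionally on $\Lambda^{\pm}$. Your identification of the two places needing verification (admissibility and Caccioppoli) and your sketch of how the coercivity $\phi_r(z)z\geq|z|^r$ and the growth bound $|\phi_r(z)|\leq\Lambda^+|z|^{r-1}$ replace the explicit power-function identities is precisely what is required; in fact your outline is more detailed than anything the paper itself provides for this statement. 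The concern you raise in the last paragraph about strong monotonicity is unnecessary here: the Caccioppoli argument in Theorem \ref{thm:CaccioppoliEstimate} never uses $(|a|^{r-2}a-|b|^{r-2}b)(a-b)\geq c_r|a-b|^r$, only the two one-sided bounds you already listed.
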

%

A notable special case is when $a \equiv 0$. In this situation we obtain regularity results for a wide class of operators related to the fractional $p$-Laplacian. Upon careful inspection of the forthcoming proofs one should note that if $a \equiv 0$ then solutions need not be bounded, and we have the following theorem as a consequence:

\begin{theorem}\label{thm:pLaplace}
Let $p \geq 2$ and $s \in (0,1)$ satisfy \eqref{Assumption:Intro:Exp2}. Fix $\delta_0 > 0$, and let $f \in L^{p_* + \delta_0}_{loc}(\bbR^n)$. Let $K_{sp}$ satisfy \eqref{eq:Intro:ConditionsOnGenKernels} and $\phi_p$ satisfy \eqref{eq:Intro:ConditionsOnGenMonotons}. Suppose that $u \in W^{s,p}(\bbR^n)$ satisfies
\begin{equation*}
 \iintdm{\bbR^n}{\bbR^n}{K_{sp}(x,y) \phi_p(u(x)-u(y)) (\varphi(x)-\varphi(y))}{y}{x} = \intdm{\bbR^n}{f(x) \varphi(x)}{x}
\end{equation*}
for every $\varphi \in C^{\infty}_c(\bbR^n)$.
Then there exists $\veps_0 \in (0,1)$ depending only on the ellipticity constants $\Lambda^+$ and $\Lambda^-$, $n$, $p$, $s$, and $\delta_0$ but not $u$ such that for every $\delta \in (0,\veps_0)$
\begin{equation*}
u \in W^{s+\delta, p+\delta}_{loc}(\bbR^n)\,.
\end{equation*}
\end{theorem}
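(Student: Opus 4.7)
The plan is to specialize the proof strategy of Theorem~\ref{thm:Intro:MainThm} (more precisely, the general-operator version stated immediately before this theorem) to the case $a \equiv 0$, and then observe that each appearance of the hypothesis $u \in L^{\infty}(\bbR^n)$ in that strategy serves only to tame the $q$-growth term. Once the $q$-term is absent, the energy reduces to a purely $(s,p)$-type energy whose admissibility, Caccioppoli, reverse-H\"older, and Gehring estimates all close using only $u \in W^{s,p}(\bbR^n)$ together with the structural assumptions \eqref{eq:Intro:ConditionsOnGenKernels}--\eqref{eq:Intro:ConditionsOnGenMonotons}.

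First I would verify that suitable cutoffs of $u$ are admissible test functions in the weak formulation. The double-phase admissibility lemma (Theorem~\ref{thm:AdmissibleTestFxn}) invokes $\Vnorm{u}_{L^{\infty}}$ only when estimating the $(t,q)$-piece of the energy via the interpolation $|u|^{q}\le \Vnorm{u}_{L^{\infty}}^{q-p}|u|^{p}$. With $a \equiv 0$ only the $(s,p)$-piece remains, and for any compactly supported cutoff $\zeta \in C_c^{\infty}(\bbR^n)$ the Gagliardo seminorms of $\zeta u$ and of $(u-(u)_B)\zeta^{p}$ are bounded in terms of $[u]_{W^{s,p}(\bbR^n)}$, $\Vnorm{u}_{L^{p}(\mathrm{supp}\,\zeta)}$, and derivatives of $\zeta$, with no $L^{\infty}$ bound on $u$ required.

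Next I would reproduce the Caccioppoli-type inequality of Theorem~\ref{thm:CaccioppoliEstimate} by testing \eqref{eq:Intro:WeakFormGeneralOp} against $(u - (u)_B)\zeta^{p}$ on a ball $B$. Expressed through the dual pair $(U,\nu)$ of \eqref{eq:MeasureDefn}--\eqref{eq:UDefn:A}, the resulting inequality with $a\equiv 0$ bounds $\int_{B\times B} U^{p}\, d\nu$ by averages of $U^{\kappa}$ with some $\kappa<p$ over $2B \times 2B$, a nonlocal tail in $U$, and a term containing $\Vnorm{f}_{L^{p_{*}+\delta_{0}}(2B)}$. Passing to averages yields the diagonal reverse H\"older inequality of Proposition~\ref{prop:RevHolder} restricted to a single phase:
\begin{equation*}
\Big(\fint_{B \times B} U^{p} \, d\nu \Big)^{1/p} \precsim \sum_{k=0}^{\infty} 2^{-k\left(\frac{sp}{p-1} - s - \veps\right)} \Big(\fint_{2^{k}B \times 2^{k}B} U^{\kappa} \, d\nu \Big)^{1/\kappa} + \text{term in } f.
\end{equation*}
Feeding this into the fractional Gehring lemma of Section~\ref{sec:GehringLemma}, which depends only on $n,p,s,\Lambda^{\pm}$ and $\delta_{0}$, produces $U \in L^{p+\veps'}_{loc}(\bbR^{2n};\nu)$ for some $\veps'>0$, equivalently $u \in W^{s+\veps_{1}, p+\veps_{2}}_{loc}(\bbR^n)$ for positive $\veps_{1},\veps_{2}$, and a diagonal comparison gives the symmetric exponent $s+\delta$, $p+\delta$ of the statement.

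The main obstacle I anticipate is the admissibility step: one must verify, in the purely $(s,p)$ and possibly unbounded setting, that the truncation-and-mollification scheme approximating $(u-(u)_B)\zeta^{p}$ by elements of $C_c^{\infty}(\bbR^n)$ converges both in the local integrand and in the nonlocal tail, using only $W^{s,p}$-regularity of $u$ together with the monotonicity and growth $|\phi_p(z)| \leq \Lambda^{+}|z|^{p-1}$ from \eqref{eq:Intro:ConditionsOnGenMonotons}. Once this is settled, every subsequent step is either a verbatim repetition of the general proof with the $q$-terms dropped, or an immediate specialization thereof, and the $L^{\infty}$ hypothesis on $u$ may be discarded from the assumptions.
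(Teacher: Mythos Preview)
Your proposal is correct and follows essentially the same approach as the paper, which does not give a separate proof of this theorem but instead remarks throughout (see Remarks~\ref{remarka=0} and the remarks following Theorem~\ref{thm:CaccioppoliEstimate} and Corollary~\ref{cor:RevHolderFinal}) that when $a\equiv 0$ every use of the $L^{\infty}$ hypothesis disappears along with the $q$-term. The one point worth noting is that the admissibility step you flag as the main obstacle is in fact \emph{simpler} here than in the double-phase case: as the paper observes in Remark~\ref{remarka=0}, a single application of H\"older's inequality gives $|\cE(u,w_j)-\cE(u,w)|\le [u]_{W^{s,p}}^{p-1}[w_j-w]_{W^{s,p}}$, so any $W^{s,p}$-approximating sequence works and no truncation-and-mollification scheme is needed.
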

Interior Sobolev regularity for solutions to the Poisson problem for the fractional $p$-Laplacian (that is, for the above operator with $\phi_p(t) = |t|^{p-2}t$ and $K_{s,p}(x,y) = |x-y|^{-n-sp}$) is proven in \cite{brasco2017higher}. An \textit{a priori} estimate in the spirit of Theorem \ref{thm:pLaplace} for smooth solutions to ``regional" operators of the above type can be found in \cite{schikorra2016nonlinear}.


\subsection{A Fractional Gehring Lemma for General Sobolev Functions}
%
%
%
%
%

We state here a version of the Fractional Gehring Lemma valid for general Sobolev functions. For $p=2$ this is exactly the statement of \cite[Theorem 1.3]{kuusi2015}. 

\begin{theorem}
	Suppose $u \in W^{s,p}(\bbR^n)$ for $s \in (0,1)$, $p \geq 2$. Let $\eta \in [1,p)$ be fixed, let $\veps \in (0,s/p)$, let $\{\alpha_k\} \in \ell^1$ and let $(U,\nu)$ be the dual pair generated by $u$. Suppose the following reverse H\"older-type inequality holds for any $\sigma \in (0,1)$ and for any ball $B \subset \bbR^n$ and $\mathcal{B} = B\times B$:
\begin{equation*}
\begin{split}
\left( \fint_{\frac{1}{4} \cB} U^{p} \, \rmd \nu \right)^{1/p} &\leq \frac{C}{\sigma \veps^{1/\eta - 1/p}} \left( \fint_{\cB} U^{\eta} \, \rmd \nu \right)^{1/\eta} + \frac{C \sigma}{ \veps^{1/\eta - 1/p}} \sum_{k=0}^{\infty} \alpha_k \left( \fint_{2^k \cB} U^{\eta} \, \rmd \nu \right)^{1/\eta}\,.
\end{split}
\end{equation*}
Then there exists a $\delta_0 >0$ depending only on $n$, $s$, $p$, $\eta$, $\sigma$, $\alpha_k$ and $\veps$ such that for all $\delta \in (0,\delta_0)$ the function $u \in W^{s+\delta,p+\delta}_{loc}(\bbR^n)$, with the following inequality holding for a constant $C$ depending on  $n$, $s$, $p$, $\eta$, $\sigma$, $\alpha_k$ and $\veps$:
\begin{equation*}
	\begin{split}
		\left( \fint_{\frac{1}{4} \cB} U^{p+\delta} \, \rmd \nu \right)^{1/(p+\delta)} &\leq C \sum_{k=0}^{\infty} \alpha_k \left( \fint_{2^k \cB} U^{p} \, \rmd \nu \right)^{1/p}\,.
	\end{split}
\end{equation*}
\end{theorem}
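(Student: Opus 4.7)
The plan is to adapt the diagonal level-set and stopping-time technique of Kuusi, Mingione, and Sire in order to convert the hypothesized reverse Hölder inequality into a higher-integrability statement for $U$ against $\nu$; since $\nu$ is the measure \eqref{eq:MeasureDefn} for the given $\veps$, the conclusion $U \in L^{p+\delta}_{loc}(\bbR^{2n}; \nu)$ translates directly into $u \in W^{s+\delta', p+\delta'}_{loc}(\bbR^n)$ for a comparable $\delta' > 0$. The fundamental difficulty is that the hypothesized estimate is available only on \emph{diagonal} balls $\cB = B \times B \subset \bbR^{2n}$, so the classical Calderón–Zygmund / Hardy–Littlewood route to Gehring's lemma on the product space is unavailable; instead, one must build covering and maximal-function tools restricted to diagonal balls, with respect to the doubling measure $\nu$.

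First I would fix a diagonal ball $\cB_0 = B_0 \times B_0$ in which we wish to upgrade integrability and introduce the super-level sets $E(\lambda) := \{(x,y) \in \tfrac14 \cB_0 : U(x,y) > \lambda\}$. For $\lambda$ above a threshold proportional to the global $\eta$-average of $U$ on $\cB_0$, a Vitali-type stopping-time construction for diagonal balls (built with respect to $\nu$) should produce a countable disjoint family $\{\cB_i\}$ of diagonal balls whose fixed dilations cover $E(\lambda)$ and satisfy $\fint_{\cB_i} U^\eta \, d\nu \simeq \lambda^\eta$. Applying the hypothesized reverse Hölder inequality on each $\cB_i$, raising to the $p$-th power, multiplying by $\nu(\cB_i)$ and summing in $i$ yields a good-$\lambda$ inequality of the shape
\begin{equation*}
\int_{E(\lambda)} U^p \, d\nu \;\leq\; \frac{C}{\sigma^p \veps^{p/\eta - 1}} \, \lambda^{p-\eta} \int_{E(\kappa \lambda)} U^\eta \, d\nu \;+\; \sigma^p \, \mathcal{T}(\lambda)
\end{equation*}
for some $\kappa \in (0,1)$, where $\mathcal{T}(\lambda)$ collects the nonlocal tail contributions coming from $\sum_k \alpha_k (\fint_{2^k \cB_i} U^\eta \, d\nu)^{1/\eta}$. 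The $\ell^1$ summability of $\{\alpha_k\}$ combined with Jensen's inequality extracts the $k$-sum out of the $p$-th power; swapping the resulting $k$-sum with the disjoint $i$-sum and invoking the doubling of $\nu$ reduces $\mathcal{T}(\lambda)$ to a controlled expression in terms of $U^\eta$-averages over dyadic enlargements of $\cB_0$.

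Multiplying this good-$\lambda$ inequality by $\lambda^{\delta - 1}$ and integrating $\lambda$ via the layer-cake formula produces an estimate of the form
\begin{equation*}
\int_{\tfrac14 \cB_0} U^{p+\delta} \, d\nu \;\leq\; \frac{C \delta}{\sigma^p \veps^{p/\eta - 1}} \int_{\tfrac14 \cB_0} U^{p+\delta} \, d\nu \;+\; \sigma^p \sum_{k} \alpha_k^p \Big(\fint_{2^k \cB_0} U^{p} \, d\nu\Big)^{1 + \delta/p} \nu(2^k \cB_0) \;+\; \text{l.o.t.}
\end{equation*}
The hypothesis $u \in W^{s,p}(\bbR^n)$ guarantees that $U \in L^p(\nu)$ and, after a standard truncation $\min(U,N)$, that the left-hand integral is \emph{a priori} finite. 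Fixing $\sigma$ small enough to render the tail contribution admissible via the $\ell^1$ bound on $\alpha_k$, and then choosing $\delta$ small depending on $\sigma$ so that $C\delta / (\sigma^p \veps^{p/\eta - 1}) < 1/2$, one absorbs the first right-hand term into the left and obtains the stated inequality; sending $N \to \infty$ by monotone convergence removes the truncation.

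The main obstacle, in my view, is the diagonal-only character of the reverse Hölder: every stopping-time construction, Vitali covering lemma, and maximal-function estimate must be built from scratch for diagonal balls against $\nu$, with no appeal to the ambient product structure of $\bbR^{2n}$, and one must verify the appropriate doubling, continuity, and finite-overlap properties at each step. A secondary but delicate technical point is the ordering of the absorption step: $\sigma$ must be chosen first, independently of $\delta$, using only the $\ell^1$ norm of $\{\alpha_k\}$, and then $\delta$ must be chosen small in a way depending on $\sigma^{-1}$; verifying that this sequence of choices closes without circularity — and that the resulting constant in the conclusion depends only on the declared parameters — is the technical heart of the argument.
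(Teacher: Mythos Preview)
Your proposal follows the same route as the paper, which adapts the Kuusi--Mingione--Sire diagonal level-set technique. The paper does not prove this statement separately; it is obtained from the machinery of Section~\ref{sec:GehringLemma} (specializing Theorem~\ref{thm:Gehring:MainThm} with $F\equiv 0$, $H=U^{p-1}$), with the hard level-set estimate \eqref{eq:MainProof:LevelSetEstimate} outsourced to the companion note. Your outline---diagonal stopping-time covering, applying the reverse H\"older hypothesis on stopping balls, good-$\lambda$ inequality, layer-cake integration, truncation for a priori finiteness, and choosing $\sigma$ before $\delta$---matches the paper's structure, and you correctly identify the essential obstruction.

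One step as written would not close. In your displayed absorption inequality you put the reabsorbable term $\int U^{p+\delta}\,d\nu$ on the \emph{same} set $\tfrac14\cB_0$ as the left side. In the actual argument this is not what the covering produces: the stopping balls $\cB_i$ covering $E(\lambda)\subset\cB_\beta$ need not lie inside $\cB_\beta$, only inside a slightly larger concentric ball $\cB_\alpha$ (their radii are forced below $\alpha-\beta$ by building the threshold $\lambda_0$ with the factor $(\varrho_0/(\alpha-\beta))^{2n+p}$, as in \eqref{eq:DefinitionOfLambda}). After layer-cake integration one therefore obtains not a self-referential inequality but
\[
\int_{\cB_\beta} U_m^{\delta}\, U^{p}\,d\nu \;\le\; \tfrac14 \int_{\cB_\alpha} U_m^{\delta}\, U^{p}\,d\nu \;+\; C\,\nu(\cB_{\varrho_0})\,\lambda_0^{p+\delta}
\]
for arbitrary $\varrho_0<\beta<\alpha<2\varrho_0$; compare \eqref{eq:MainProof:BeginningOfIteration}. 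The argument then closes not by direct absorption but by applying the standard iteration lemma (Giusti) to $\varphi(\varrho)=\big(\fint_{\cB_\varrho}U_m^{\delta}U^{p}\,d\nu\big)^{1/(p+\delta)}$. This is routine once recognized, but it is a genuine missing ingredient in your sketch.
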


This paper is organized as follows: In the next section we identify notation and conventions, and show that bounded weak solutions to \eqref{eq:Intro:MainEqn} can be used as test functions in the weak formulation. The Caccioppoli inequality is proved in Section \ref{sec:Caccioppoli}, and the reverse H\"older inequality is proved in Section \ref{sec:RevHolder}. In Section \ref{sec:GehringLemma} we establish the fractional Gehring lemma and associated higher differentiability of solutions. The Gehring lemma relies on an estimate of the level sets of $G$; its proof is quite technical but the argument used very closely resembles that of the corresponding result for $p=2$ found in \cite{kuusi2015}. For completeness, we have placed its proof in the companion note \cite{Scott-Mengesha-Levelset}.

\section{Preliminaries}


Throughout, we denote positive constants by $c$, $C$, etc., and they may change from line to line. We list the dependencies in parentheses after the constant when we wish to make them explicit, i.e.\ if a constant $C$ depends only on $n$, $p$ and $s$, we write $C = C(n,p,s)$. We will abbreviate the following set of parameters as
\begin{equation*}
\texttt{data} \equiv (n,p,q,s,t,M,\Vnorm{u}_{L^{\infty}})\,.
\end{equation*}
In $\bbR^n$, denote the open ball of radius $R$ centered at $x_0$ by
\begin{equation*}
B(x_0,R) = B_R(x_0) := \{ x \in \bbR^n \, : \, |x-x_0| < R \}\,.
\end{equation*}
We will sometimes denote the ball $B \equiv B_R \equiv B_R(x_0)$ whenever the center and/or radius is clear from context. If $B$ is a ball centered at $x_0$ with radius $R$, then $\sigma B$ is the ball centered at $x_0$ with radius $\sigma R$.
Given any measure $\mu$, denote the average of a $\mu$-measurable function $h$ over a set $\cA$ by
\begin{equation*}
(h)_{\cA} := \fint_{\cA} h \, \rmd \mu = \frac{1}{\mu(\cA)} \intdm{\cA}{h(x)}{\mu}\,.
\end{equation*}
In dealing with functions defined on $\bbR^{2n}$ such as $U$, we consider the norm on $\bbR^{2n}$ defined by
\begin{equation*}
\Vnorm{(x,y)} := \max \{ |x|, |y| \}\,,
\end{equation*}
where $|\cdot|$ denotes the Euclidean norm on $\bbR^n$.
Denote the balls defined by this norm as
\begin{equation*}
\begin{split}
\cB(x_0,y_0,R) &:= \{ (x,y) \in \bbR^n \times \bbR^n \, : \, \Vnorm{(x,y)-(x_0,y_0)} < R\} \\
&= B(x_0,R) \times B(y_0,R)\,.
\end{split}
\end{equation*}
If we denote
\begin{equation*}
B_{\bbR^{2n}}(x_0,y_0,R) := \{ (x,y) \in \bbR^n \times \bbR^n \, : \, \sqrt{|x-x_0|^2 + |y-y_0|^2} < R \}\,,
\end{equation*}
then clearly
\begin{equation*}
B_{\bbR^{2n}}(x_0,y_0,R) \subset \cB(x_0,y_0,R) \subset B_{\bbR^{2n}}(x_0,y_0,2R)\,. 
\end{equation*}
Often we will need to consider balls in $\bbR^{2n}$ centered at a point on the ``diagonal," that is, a point of the form $(x_0,x_0)$ for $x_0 \in \bbR^n$. In this case we abbreviate $\cB(x_0,x_0,R) \equiv \cB(x_0,R)$. We will also use the abbreviations $\cB(x_0,R) \equiv \cB_R(x_0) \equiv \cB_R \equiv \cB$ whenever the center and/or radius is clear from context. Whenever there is no ambiguity we write $\cB(x_0,\sigma R) = \sigma \cB$.
We also denote
\begin{equation*}
\text{Diag} := \{ (x,x) \, : \, x \in \bbR^n \}\,.
\end{equation*}
We will use the elementary inequality
\begin{equation}\label{eq:GeometricSeriesEstimate}
2^{k r} \sum_{j=k-1}^{\infty} 2^{-j r} \leq \frac{4^{ r}}{r \ln(2)}\,, \qquad \text{ for } k \geq 1 \quad \text{and} \quad r \in (0,\infty)\,.
\end{equation}
The cardinality of a finite set $\cA$ is denoted by $\# \cA$. The set of nonnegative integers $\{ 0,1,2, \ldots \}$ is designated by $\bbZ_+$.

For any domain $\Omega \subset \bbR^n$, $0 < \sigma <1$ and $r \in [1,\infty)$ the fractional Sobolev spaces are defined by the Gagliardo seminorm
\begin{equation*}
W^{\sigma,r}(\Omega) := \left\{ u \in L^r(\Omega) \, : \, [u]_{W^{\sigma,r}(\Omega)} :=  \iintdm{\Omega}{\Omega}{\frac{|u(x)-u(y)|^r}{|x-y|^{n+\sigma r}} }{y}{x} < \infty \right\}
\end{equation*}
with norm $\Vnorm{\cdot}_{W^{\sigma,r}(\Omega)}^r :=  \Vnorm{\cdot}_{L^{r}(\Omega)}^r + [\cdot]_{W^{\sigma,r}(\Omega)}^r$.

We will also use the following fractional Poincar\'e-Sobolev-type inequalities throughout the paper. A proof of the first can be found in several places; see for instance \cite{DNPV12, bass2005holder}. The second can be found in \cite{mingione2003singular, schikorra2016nonlinear}.

\begin{theorem}[Fractional Poincar\'e-Sobolev Inequality]\label{thm:SobolevInequality}
	Let $r \in [1,\infty)$, $0<\sigma<1$. Let $B = B_R(x_0)$ for some $R>0$, $x_0 \in \bbR^n$. Then there exists $C = C(n,r,\sigma) > 0$ such that 
	\begin{equation*}
		\left( \fint_B \left| \frac{v(x) - (v)_B}{R^{\sigma}} \right|^{r^{*_{\sigma}}} \, \rmd x \right)^{1/{r^{*_{\sigma}}}} \leq C \left( \int_B \fint_B \frac{|v(x)-v(y)|^r}{|x-y|^{n+\sigma r}} \, \rmd y \, \rmd x \right)^{1/r}
	\end{equation*}
	for every $v \in W^{\sigma,r}(B)$.
\end{theorem}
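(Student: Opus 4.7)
The plan is to reduce the inequality to the unit ball by scaling, and then derive the resulting fixed-scale estimate by extending to all of $\bbR^n$ and applying the global fractional Sobolev embedding. First I would verify that under the change of variables $\wt v(z) := v(x_0 + Rz)$, both sides of the claimed inequality carry the same factor of $R^{-\sigma}$: a short computation using $|x-y|=R|z_1-z_2|$, $\rmd x\,\rmd y = R^{2n}\rmd z_1\,\rmd z_2$, and $|B|=R^n|B_1|$ makes the $R$-dependence cancel. It therefore suffices to establish the Sobolev--Poincar\'e-type bound
\begin{equation*}
\Vnorm{v-(v)_{B_1}}_{L^{r^{*}}(B_1)} \leq C(n,r,\sigma)\,[v]_{W^{\sigma,r}(B_1)}\,,
\end{equation*}
noting that finiteness of $r^{*}$ forces the implicit assumption $\sigma r < n$.

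Next I would handle this unit-ball estimate by extension. Set $w:=v-(v)_{B_1}$. Because $B_1$ is a Lipschitz domain, there is a bounded extension operator $E\colon W^{\sigma,r}(B_1)\to W^{\sigma,r}(\bbR^n)$ with $\Vnorm{Ew}_{W^{\sigma,r}(\bbR^n)} \leq C\Vnorm{w}_{W^{\sigma,r}(B_1)}$; this is classical and admits constructions via reflection or Whitney decomposition, see \cite{DNPV12}. Applying the global fractional Sobolev inequality $\Vnorm{\phi}_{L^{r^{*}}(\bbR^n)} \leq C[\phi]_{W^{\sigma,r}(\bbR^n)}$ to $\phi=Ew$ and restricting back to $B_1$ yields
\begin{equation*}
\Vnorm{w}_{L^{r^{*}}(B_1)} \leq C\bigl([w]_{W^{\sigma,r}(B_1)} + \Vnorm{w}_{L^r(B_1)}\bigr)\,.
\end{equation*}

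The final step is to absorb the $L^r$ piece into the Gagliardo seminorm. Writing $w(x)=\fint_{B_1}(v(x)-v(y))\,\rmd y$ and applying Jensen, together with the elementary bound $|x-y|^{n+\sigma r}\le 2^{n+\sigma r}$ on $B_1\times B_1$, gives
\begin{equation*}
\Vnorm{w}_{L^r(B_1)}^r \leq \fint_{B_1}\!\int_{B_1}|v(x)-v(y)|^r\,\rmd y\,\rmd x \leq C(n,r,\sigma)\,[v]^r_{W^{\sigma,r}(B_1)}\,.
\end{equation*}
Inserting this into the previous display and undoing the rescaling completes the proof. The only nontrivial ingredient is the existence of a continuous $W^{\sigma,r}$-extension operator from $B_1$ to $\bbR^n$; an extension-free alternative would be a telescoping chain-of-balls argument combined with a pointwise Hedberg-type estimate for the fractional maximal function, but this route is substantially more technical while producing the same conclusion.
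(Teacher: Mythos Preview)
Your argument is correct. The paper itself does not supply a proof of this theorem; it simply records the statement and refers the reader to \cite{DNPV12, bass2005holder}. Your route---reduce to the unit ball by scaling, extend to $\bbR^n$ using the bounded $W^{\sigma,r}$-extension operator for Lipschitz domains, apply the homogeneous global embedding $\Vnorm{\phi}_{L^{r^{*_\sigma}}(\bbR^n)}\leq C[\phi]_{W^{\sigma,r}(\bbR^n)}$, and then absorb the residual $L^r$ term via the elementary Poincar\'e-type bound---is exactly the argument presented in \cite{DNPV12}, so your proof is in line with the paper's cited source. Your observation that the statement implicitly requires $\sigma r<n$ (so that $r^{*_\sigma}$ is finite) is also apt and consistent with how the inequality is used later in the paper, where \eqref{Assumption:Intro:Exp2} is in force.
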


\begin{theorem}[Fractional Poincar\'e Inequality]\label{thm:PoincareInequality}
	Let $r \in [1,\infty)$, $0<\sigma<1$. Let $B = B_R(x_0)$ for some $R>0$, $x_0 \in \bbR^n$. Then there exists $C = C(n,r) > 0$ such that 
	\begin{equation*}
		\left( \fint_B \left| \frac{v(x) - (v)_B}{R^{\sigma}} \right|^{r} \, \rmd x \right)^{1/r} \leq C \left( \int_B \fint_B \frac{|v(x)-v(y)|^r}{|x-y|^{n+\sigma r}} \, \rmd y \, \rmd x \right)^{1/r}
	\end{equation*}
	for every $v \in W^{\sigma,r}(B)$.
\end{theorem}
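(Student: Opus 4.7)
The plan is to derive the inequality directly from Jensen's inequality together with the elementary geometric bound $|x-y| \leq 2R$ on $B \times B$; unlike the Poincaré–Sobolev statement in Theorem \ref{thm:SobolevInequality}, no Sobolev embedding or representation formula is needed here since there is no integrability gain on the left-hand side.

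First, I would rewrite the mean difference as an average and apply Jensen's inequality in the form
\begin{equation*}
|v(x) - (v)_B|^r = \left| \fint_B (v(x) - v(y)) \, \rmd y \right|^r \leq \fint_B |v(x)-v(y)|^r \, \rmd y.
\end{equation*}
Averaging this in $x$ over $B$ gives
\begin{equation*}
\fint_B |v(x) - (v)_B|^r \, \rmd x \leq \fint_B \fint_B |v(x)-v(y)|^r \, \rmd y \, \rmd x.
\end{equation*}

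Next, I would insert the singular kernel artificially: since $|x-y| \leq 2R$ for all $x,y \in B$, we have $|x-y|^{n+\sigma r} \leq (2R)^{n+\sigma r}$, hence
\begin{equation*}
|v(x)-v(y)|^r \leq (2R)^{n+\sigma r} \frac{|v(x)-v(y)|^r}{|x-y|^{n+\sigma r}}.
\end{equation*}
Plugging this in and converting one of the outer averages to an integral by dividing by $|B| = \omega_n R^n$ produces a factor of $R^{\sigma r}$, which I would then move to the left-hand side. Taking $r$-th roots yields the claimed estimate with an explicit constant $C = (2^{n+\sigma r}/\omega_n)^{1/r}$, and one sees in particular that the constant can be taken independent of $\sigma \in (0,1)$ for $r$ fixed.

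There is no real obstacle in this argument; the only thing to be mildly careful about is the factor of $R^{\sigma}$, which must arise from the scaling of the singular kernel rather than from any dilation trick — so the crude bound $|x-y| \leq 2R$ has to be used only on the \emph{difference} of the exponents, keeping the $|x-y|^{n+\sigma r}$ in the denominator to recover precisely the Gagliardo seminorm on the right. The finiteness of the right-hand side is automatic from the assumption $v \in W^{\sigma,r}(B)$, and the argument requires no regularity on $v$ beyond this.
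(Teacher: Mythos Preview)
Your argument is correct. The paper does not actually supply a proof of this statement; it merely records the inequality and points to \cite{mingione2003singular, schikorra2016nonlinear} for a reference, so there is nothing to compare your approach against. Your Jensen-plus-crude-kernel-bound argument is exactly the standard elementary proof of this inequality, and your tracking of the constant is right: from
\[
\frac{1}{R^{\sigma r}}\fint_B |v(x)-(v)_B|^r\,\rmd x \le \frac{(2R)^{n+\sigma r}}{R^{\sigma r}}\fint_B\fint_B \frac{|v(x)-v(y)|^r}{|x-y|^{n+\sigma r}}\,\rmd y\,\rmd x
= \frac{2^{n+\sigma r}}{|B|/R^n}\int_B\fint_B \frac{|v(x)-v(y)|^r}{|x-y|^{n+\sigma r}}\,\rmd y\,\rmd x,
\]
the factor $2^{n+\sigma r}$ is bounded by $2^{n+r}$ uniformly in $\sigma\in(0,1)$, giving $C=C(n,r)$ as claimed in the statement.
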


\subsection{Admissible Test Functions}

\begin{theorem}\label{thm:AdmissibleTestFxn}
Let $u \in W^{s,p}(\bbR^n) \cap L^{\infty}(\bbR^n)$ satisfy \eqref{eq:Intro:WeakFormulation} with data $f \in L^{p_* + \delta_0}_{loc}(\bbR^n)$. Let $B = B(x_0,r) \subset \bbR^n$ be an arbitrary ball. Then every $w \in W^{s,p}(\bbR^n) \cap L^{\infty}(\bbR^n)$ with $\cE(w,w) < \infty$ and $\supp w \subset \frac{1}{2}B$ satisfies
\begin{equation}\label{eq:AdmissibleTestFxn:MainResult}
\cE(u,w) = \intdm{\bbR^n}{f(x) w(x)}{x}\,.
\end{equation}
\end{theorem}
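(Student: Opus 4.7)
The plan is to approximate $w$ by smooth compactly supported functions and pass to the limit in the weak formulation \eqref{eq:Intro:WeakFormulation}. Let $\{\rho_\eta\}_{\eta>0}$ be a standard radial mollifier on $\bbR^n$ and set $w_\eta := w \ast \rho_\eta$. Since $\supp w \subset \tfrac{1}{2}B$, for every $\eta < r/2$ we have $w_\eta \in C^\infty_c(B) \subset C^\infty_c(\bbR^n)$ with $\Vnorm{w_\eta}_{L^\infty} \leq \Vnorm{w}_{L^\infty}$. By the hypothesis that $u$ satisfies \eqref{eq:Intro:WeakFormulation}, the identity $\cE(u,w_\eta) = \int_{\bbR^n} f w_\eta \, \rmd x$ holds for each such $\eta$, so the task reduces to justifying the passage to the limit $\eta \to 0$ on both sides.

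Before doing so I would first verify that $w$ lies in the \emph{unweighted} fractional Sobolev space $W^{t,q}(\bbR^n)$; this does not follow trivially from $\cE(w,w) < \infty$ since $a$ may vanish on large sets. The strategy is to split the Gagliardo $W^{t,q}$-integral for $w$ at $|x-y|=2r$. On the near-diagonal piece $|x-y|\leq 2r$ the bound $|w(x)-w(y)|^q \leq (2\Vnorm{w}_{L^\infty})^{q-p}|w(x)-w(y)|^p$ (valid since $q\geq p\geq 2$ and $w\in L^\infty$), combined with the structural inequality $sp\geq tq$ from \eqref{Assumption:Intro:Exp} applied to the factor $|x-y|^{-n-tq} = |x-y|^{-n-sp}\cdot |x-y|^{sp-tq}$, bounds the contribution by a constant depending on $r$, $\Vnorm{w}_{L^\infty}$, $p$, $q$, $s$, $t$ times $[w]^p_{W^{s,p}(\bbR^n)}$. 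On the complementary region $|x-y|>2r$ the support condition $\supp w \subset \tfrac{1}{2}B$ forces at least one of $w(x),w(y)$ to vanish, producing a bound in terms of $\Vnorm{w}_{L^\infty}$, $|B|$ and $r^{-tq}$. Standard mollifier theory then yields $w_\eta \to w$ in both $W^{s,p}(\bbR^n)$ and $W^{t,q}(\bbR^n)$, with $\Vnorm{w_\eta}_{L^\infty}\leq \Vnorm{w}_{L^\infty}$.

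With these convergences in hand, passing to the limit is routine. Splitting $\cE(u, w_\eta - w)$ into its $p$-part and $q$-part and applying H\"older's inequality with conjugate exponents $(p/(p-1), p)$ and $(q/(q-1), q)$ respectively produces
\begin{equation*}
|p\text{-part}| \leq [u]^{p-1}_{W^{s,p}(\bbR^n)}\, [w_\eta - w]_{W^{s,p}(\bbR^n)}\,,\qquad |q\text{-part}| \leq \cE(u,u)^{(q-1)/q} M^{1/q}\, [w_\eta - w]_{W^{t,q}(\bbR^n)}\,,
\end{equation*}
where the $q$-estimate uses $0\leq a\leq M$ together with the finiteness of the $q$-energy contained in $\cE(u,u)<\infty$ (guaranteed by the definition of bounded weak solution). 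Both right-hand sides vanish as $\eta\to 0$ by the previous step. On the right side of \eqref{eq:Intro:WeakFormulation}, uniform convergence $w_\eta\to w$ on $B$ under a uniform $L^\infty$ bound, together with $f\in L^{p_*+\delta_0}(B)\subset L^1(B)$, gives $\int f w_\eta \to \int f w$ by dominated convergence, yielding \eqref{eq:AdmissibleTestFxn:MainResult}.

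The main obstacle is the middle step, namely extracting unweighted $W^{t,q}$ regularity of $w$ from the $a$-weighted hypothesis $\cE(w,w)<\infty$. Because the modulating coefficient $a$ is merely measurable and may vanish, mollifier convergence in the weighted seminorm does not follow by a translation/Jensen argument; it is precisely the interplay of the boundedness of $w$, its compact support in $\tfrac{1}{2}B$, and the structural bound $tq\leq sp$ from \eqref{Assumption:Intro:Exp} that bridges the gap. Without any one of these ingredients the limiting argument would break down, which clarifies the roles of the hypotheses in the statement.
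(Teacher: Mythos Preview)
Your argument is correct and takes a genuinely different route from the paper's. The paper never passes through the unweighted space $W^{t,q}$. Instead it establishes $L^1$-convergence of the full integrand $P(\cdot,\cdot,w_\tau)\to P(\cdot,\cdot,w)$ of $\cE(w,w)$: away from the diagonal this is direct, while on a bounded set it introduces the auxiliary infimum coefficient $a_\tau(x,y):=\inf_{z\in B_\tau(0)}a(x-z,y-z)$ so that the mollified integrand $P_\tau(x,y,w_\tau)$ is dominated by the convolution $\Phi_\tau := P(\cdot,\cdot,w)\ast\psi_\tau$, and the Generalized Dominated Convergence Theorem applies. Convergence of $\cE(u,w_\tau)$ is then obtained from the Young-type bound $|T(x,y,u,w_\tau)|\leq C\big(P(x,y,u)+P(x,y,w_\tau)\big)$ and a second application of generalized dominated convergence.

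Your approach is more elementary: the observation that $w\in W^{s,p}\cap L^\infty$ with compact support already forces $w\in W^{t,q}(\bbR^n)$ (using only $tq\leq sp$) lets you bypass the weighted machinery entirely and reduce to standard mollifier convergence in an unweighted Sobolev space, followed by a direct H\"older estimate. In particular your argument shows that the hypothesis $\cE(w,w)<\infty$ is in fact automatic under the other assumptions. What the paper's route buys is the stronger intermediate statement $P(\cdot,\cdot,w_\tau)\to P(\cdot,\cdot,w)$ in $L^1(\bbR^{2n})$, which keeps the weighted structure intact and may be of independent use; what your route buys is brevity and the clarification that the finiteness of $\cE(w,w)$ is not an extra constraint.
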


\begin{proof}
	It suffices to prove \eqref{eq:AdmissibleTestFxn:MainResult} for $B = B_1(0)$; the general case will follow by a scaling argument. Indeed, for any $R>0$ and $x_0 \in \bbR^n$ and for any $w \in  W^{s,p}(\bbR^n) \cap L^{\infty}(\bbR^n)$  with $\cE(w,w) < \infty$ and $\supp w \subset \frac{1}{2}B(x_0,R)$ define the functions
	\begin{equation*}
	\begin{split}
	v(x) = u(x_0 + R x) \in W^{s,p}(\bbR^n) \cap L^{\infty}(\bbR^n)\,, \\
	\wt{w}(x) = w(x_0 + Rx) \in  W^{s,p}(\bbR^n) \cap L^{\infty}(\bbR^n)\,, \\
	\widetilde{f}(x) = R^{sp} f(x_0+Rx) \in L^{p_*+\delta_0}_{loc}(\bbR^n)\,.
	\end{split}
	\end{equation*}
Then $\cE(\wt{w},\wt{w}) < \infty$ and $\supp \wt{w} \subset B(0,1/2)$. An application of \eqref{eq:AdmissibleTestFxn:MainResult} for $R=1$ and $x_0 = 0$ then gives
\begin{equation*}
\wt{\cE}(v,\wt{w}) = \intdm{\bbR^n}{\wt{f}(x) \wt{w}(x)}{x}\,,
\end{equation*}
where
	\begin{align*}
	\widetilde{\cE}(v,\wt{w}) &:= \iintdm{\bbR^n}{\bbR^n}{\frac{|v(x)-v(y)|^{p-2}}{|x-y|^{n+sp}} (v(x)-v(y))(\wt{w}(x)-\wt{w}(y)) \\
	&\qquad\quad+ \widetilde{a}(x,y) \frac{|v(x)-v(y)|^{q-2}}{|x-y|^{n+tq}} (v(x)-v(y))(\wt{w}(x)-\wt{w}(y))}{y}{x}
	\end{align*}
and
	\begin{equation*}
	\widetilde{a}(x,y) = R^{sp-tq} a(x_0+Rx,x_0+Ry)\,.
	\end{equation*}
Note that the function $\wt{a}$ satisfies \eqref{Assumption:Intro:Coeff}. Therefore \eqref{eq:AdmissibleTestFxn:MainResult} for general $x_0$ and $R$ follows by rescaling.

We will first show that for any $w \in W^{s,p}(\bbR^n) \cap L^{\infty}(\bbR^n)$ with $\cE(w,w) < \infty$ and $\supp w \subset \frac{1}{2}B$ there exists a sequence $\{w_j \} \subset C^{\infty}_c(\frac{3}{2}B)$ (regarded as defined on all of $\bbR^n$ via extension by zero) such that
\begin{equation}\label{eq:AdmissibleTestFxn:Proof0.5}
w_j \to w \quad \text{ in } L^p(\bbR^n)\,, \qquad P(x,y,w_j) \to P(x,y,w) \quad \text{ in } L^1(\bbR^{2n})\,.
\end{equation}
Let $\psi \in C^{\infty}_c(B_1(0))$ be a standard mollifier with $\psi \geq 0$, $\Vnorm{\psi}_{L^1(\bbR^n)}=1$, and define $\psi_{\tau}(x) := \frac{1}{\tau^n} \psi \left( \frac{x}{\tau} \right)$ for $x \in B(0,\tau)$ with $\tau > 0$. For $0 < \tau < 1/4$ define $w_{\tau} := w \ast \psi_{\tau} \in C^{\infty}_c(B_{1+\tau})$. We claim that we can choose a subsequence $\tau_i$ such that $\{w_{\tau_j}\}$ satisfies \eqref{eq:AdmissibleTestFxn:Proof0.5}. To that end, notice first that since $w \in L^{\infty}(\bbR^n)$ we have $\lim_{\tau \to 0} \Vnorm{w_{\tau} - w}_{L^m(\sigma B)} =0$ for all $m \in [1,\infty)$ and for all $\sigma > 0$, and thus there exists a subsequence (not relabeled) such that $P(x,y,w_{\tau}) \to P(x,y,w)$ almost everywhere in $\bbR^{2n}$. In fact, we will show that $P(x,y,w_{\tau}) \to P(x,y,w)$ in $L^1(\bbR^{2n})$ as $\tau \to 0$. 
Directly, $P(x,y,w_{\tau}) \to P(x,y,w)$ in $L^1(\bbR^{2n} \setminus \cB(0,2))$:
\begin{equation*}
\begin{split}
\intdm{\bbR^{2n} \setminus \cB(0,2)}{& |P(x,y,w_{\tau}) -P(x,y,w)| }{y}\, \rmd x \\
&= 2 \iintdm{B_{3/2}(0)}{\bbR^n \setminus B_2(0)}{ \left|  \frac{|w_{\tau}(x)|^p - |w(x)|^p}{|x-y|^{n+sp}} +a(x,y) \frac{|w_{\tau}(x)|^q-|w(x)|^q}{|x-y|^{n+tq}} \right| }{y}{x} \\
&\leq C \Vnorm{w_{\tau} - w}_{L^p(B_{3/2}(0))} + C \Vnorm{w_{\tau} - w}_{L^q(B_{3/2}(0))} \rarrowop_{\tau \to 0} 0\,
\end{split}
\end{equation*}
where in the last line we used the algebraic inequality $||a|^{p} - |b|^{p}| \leq p|a-b| (|a|^{p-1} + |b|^{p-1})$ which holds true for any $p\geq 2$ and $a, b\in \mathbb{R}$ followed by H\"older's inequality. 

Thus it remains to show that $P(x,y,w_{\tau}) \to P(x,y,w)$ in $L^1(\cB(0,2))$ as $\tau \to 0$. We will show that there exist $\bbR^{2n}$-integrable functions $\Phi_{\tau}$ such that $P(x,y,w_{\tau}) \leq \Phi_{\tau}(x,y)$ pointwise in $\cB(0,2)$ for all $\tau \in (0,1/4)$ and that  $\Phi_{\tau} \to P(\cdot,\cdot,w)$ in $L^1(\bbR^{2n})$. This will imply convergence of $P(x,y,w_{\tau})$ in $L^1(\cB(0,2))$ by the Generalized Dominated Convergence Theorem and thus \eqref{eq:AdmissibleTestFxn:Proof0.5} will be proved. 
To find such a function $\Phi_{\tau}$, we introduce the expressions 
\begin{equation*}
\begin{split}
 a_{\tau}(x,y) := \inf_{z \in B_{\tau}(0) } a(x-z,y-z)\,, \text{and}\\
	P_{\tau}(x,y,v) := \frac{|v(x)-v(y)|^p}{|x-y|^{n+sp}} + a_{\tau}(x,y) \frac{|v(x)-v(y)|^q}{|x-y|^{n+tq}}\,.
\end{split}
\end{equation*}
For $x$ and $y$ in $B_2(0)$, since $sp - tq \geq 0$ we have 
\begin{equation*}
\begin{split}
|w_{\tau}(x)-w_{\tau}(y)|^{q-p} |x-y|^{sp-tq} &\leq 4^{sp-tq} \left| \intdm{\bbR^n}{|\psi_{\tau}(x-z) - \psi_{\tau}(y-z)| |w(z)|}{z} \right|^{q-p} \\
&\leq C \Vnorm{\psi}_{L^{1}(B)}^{q-p} \Vnorm{w}_{L^{\infty}(\bbR^n)}^{q-p} \leq \wt{C}\,,
\end{split}
\end{equation*}
where $\wt{C}$ is independent of $\tau$. Then
\begin{equation*}
\begin{split}
P(x,y,w_{\tau}) &\leq |a(x,y)-a_{\tau}(x,y)| \frac{|w_{\tau}(x)-w_{\tau}(y)|^q}{|x-y|^{n+tq}} + P_{\tau}(x,y,w_{\tau}) \\
	&\leq 2M \wt{C}  \frac{|w_{\tau}(x)-w_{\tau}(y)|^p}{|x-y|^{n+sp}} + P_{\tau}(x,y,w_{\tau}) \leq C P_{\tau}(x,y,w_{\tau})\,.
\end{split}
\end{equation*}
To further estimate $P_{\tau}(x,y,w_{\tau})$ we see that from the definition of $a_{\tau}$
\begin{equation*}
\begin{split}
P_{\tau}(x,y,w_{\tau}) &\leq \intdm{B_{\tau}(0)}{ \frac{|w(x-z)-w(y-z)|^p}{|x-y|^{n+sp}} \psi_{\tau}(z) +a_{\tau}(x,y) \frac{|w(x-z)-w(y-z)|^q}{|x-y|^{n+tq}}  \psi_{\tau}(z) }{z} \\
	&\leq \intdm{B_{\tau}(0)}{ \frac{|w(x-z)-w(y-z)|^p}{|x-y|^{n+sp}} \psi_{\tau}(z) +a(x-z,y-z) \frac{|w(x-z)-w(y-z)|^q}{|x-y|^{n+tq}}  \psi_{\tau}(z) }{z} \\
	&= \intdm{B_{\tau}(0)}{ P(x-z,y-z,w)  \psi_{\tau}(z) }{z}\,.
\end{split}
\end{equation*}Take $\Phi_{\tau}(x, y) = \intdm{B_{\tau}(0)}{ P(x-z,y-z,w)  \psi_{\tau}(z) }{z}$. Then we have from the above calculation that $P(x,y,w_{\tau}) \leq C \Phi_{\tau}(x, y)$ 
 for all $x$ and $y$ in $B_2(0)$. Further, $\Phi_{\tau}(x, y) \to P(x,y,w)$ in $L^1(\bbR^{2n})$. Indeed, 
setting  
\begin{equation*}
\wt{x} = (x,y)\,, \qquad \wt{z} := (z,z)\,, \qquad V(\wt{x}) = v(x,y) = P(x,y,w)\,,
\end{equation*}
we have that, after change of variables and interchanging integrals, 
\begin{align*}
\int_{\mathbb{R}^{2n}} |\Phi_{\tau}(x, y) -P(x,y,w)|\, \rmd x \, \rmd y &\leq \intdm{\bbR^{2n}}{\intdm{B_{\tau}(0)}{ \psi_{\tau}(z) |V(\wt{x}) - V(\wt{x} - \wt{z}) | }{z}}{\wt{x}}\\
&=\intdm{B_1(0)}{  \psi(z) \intdm{\bbR^{2n}}{ |V(\wt{x}) - V(\wt{x} - \tau \wt{z}) | }{\wt{x}}}{z} \,.
\end{align*}
We claim that the latter converges to $0$ as $\tau\to 0$. To see this,  $V$ belongs to $L^{1}(\mathbb{R}^{2n})$ by assumption, and so $\lim\limits_{\tau \to 0} \intdm{\bbR^{2n}}{ |V(\wt{x}) - V(\wt{x} - \tau \wt{z}) | }{\wt{x}} = 0$ for each $z\in B_1(0)$ by continuity of translations in $L^1(\bbR^{2n})$.  Moreover, 
\[
 \psi(z)  \intdm{\bbR^{2n}}{ |V(\wt{x}) - V(\wt{x} - \tau \wt{z}) | }{\wt{x}} \leq 2\psi(z)\|v\|_{L^{1}}. 
\]
The result now follows by the dominated convergence theorem. 

Next we show that
\begin{equation}\label{eq:AdmissibleTestFxn:Proof2}
\cE(u,w_j) \to \cE(u,w) \qquad \text{ as } j \to \infty\,.
\end{equation}
We denote the integrand of $\cE(u,w_j)$ as $T(x,y,u,w_j)$.
By Young's inequality
\begin{equation*}
\begin{split}
|T(x,y,u,w_j)| &\leq \left( \frac{|u(x)-u(y)|^{p-1}}{|x-y|^{n+sp}} + {a}(x,y) \frac{|u(x)-u(y)|^{q-1}}{|x-y|^{n+tq}} \right) |w_j(x)-w_j(y)| \\
	&\leq C (P(x,y,u) + P(x,y,w_j))\,.
\end{split}
\end{equation*}
Thus \eqref{eq:AdmissibleTestFxn:Proof2} follows from \eqref{eq:AdmissibleTestFxn:Proof0.5} and the Generalized Dominated Convergence Theorem. Finally 
we have already noted that $w_j \to w$ in $L^{p^*}(B)$, and since $f \in L^{p_*}(B)$ we have $\intdm{\bbR^n}{f w_j}{x} \to \intdm{\bbR^n}{f w}{x}$. The proof is complete.
\end{proof}

\begin{remark}\label{remarka=0}
	If $a \equiv 0$ then the proof of Theorem \ref{thm:AdmissibleTestFxn} is much easier. Indeed, any $w \in W^{s,p}(\bbR^n)$ with compact support (say contained in a ball $B$) is an admissible test function. If $w_j$ is a sequence in $C^{\infty}_c(\bbR^n)$ converging to $w$ in $W^{s,p}(\bbR^n)$, then by H\"older's inequality
	\begin{equation*}
	\begin{split}
	|\cE(u,w_j) - \cE(u,w)| &\leq \iintdm{\bbR^n}{\bbR^n}{\frac{|u(x)-u(y)|^{p-1}}{|x-y|^{n+sp}} |w_j(x)-w_j(y)-(w(x)-w(y))|}{y}{x} \\ &
	\leq [u]_{W^{s,p}(\bbR^n)}^{p-1} [w_j-w]_{W^{s,p}(\bbR^n)}\,,
	\end{split}
	\end{equation*}
	which converges to zero as $j \to \infty$. Then $w_j \to w$ in $L^{p^*}(B)$, and since $f \in L^{p_*}(B)$ we have $\textstyle \intdm{\bbR^n}{f w_j}{x} \to \intdm{\bbR^n}{f w}{x}$, and the proof of Theorem \ref{thm:AdmissibleTestFxn} in the case $a=0$ is finished.
%
\end{remark}

\section{The Caccioppoli Inequality}\label{sec:Caccioppoli}

\begin{theorem}\label{thm:CaccioppoliEstimate}
Let $u \in  W^{s,p}(\bbR^n) \cap L^{\infty}(\bbR^n)$ be a weak solution to \eqref{eq:Intro:MainEqn}. Let $B = B_R(x_0) \subset \bbR^n$ be a ball, and let $ \varphi \in C^{\infty}_c( B)$ such that $0 \leq \varphi \leq 1$, $\supp \varphi \subset \frac{1}{2} B$ and $|\grad \varphi| \leq \frac{C(n)}{R}$. Then for some $C = C(\texttt{data}) > 0$ we have 
\begin{equation}\label{eq:CaccioppoliEstimate}
\begin{split}
& \iintdm{B}{B}{ \frac{|\varphi^{q/p}(x) u(x) - \varphi^{q/p}(y) u(y)|^p}{|x-y|^{n+sp}} }{y}{x} 
+  \iintdm{B}{B}{a(x,y) \frac{|\varphi(x) u(x) - \varphi(y) u(y)|^q}{|x-y|^{n+tq}} }{y}{x} \\
	&\leq \frac{C}{R^{sp}} \int_B |u(x)|^p \, \rmd x + C \iintdm{B}{B}{a(x,y) \frac{|\varphi(x) - \varphi(y)|^q}{|x-y|^{n+tq}} |u(x)|^q }{y}{x}\\
	&+ C \intdm{B}{\varphi^q(x) |u(x)| }{x} \intdm{\bbR^n \setminus B}{\frac{|u(y)|^{p-1}}{|x_0-y|^{n+sp}}}{y} \\
	&+C \iintdm{B}{\bbR^n \setminus B}{a(x,y) \varphi^q(x) \frac{|u(x)|^{q} + |u(y)|^{q-1}|u(x)|}{|x_0-y|^{n+tq}}}{y}{x} + C R^{n+sp'} \left( \fint_B |f(x)|^{p_*} \, \rmd x \right)^{p'/p_*}.  
\end{split}
\end{equation}
\end{theorem}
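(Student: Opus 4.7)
The overall strategy is to test the weak formulation \eqref{eq:Intro:WeakFormulation} with the function $w = \varphi^q u$. Since $u \in L^\infty(\bbR^n)$, $\varphi$ is smooth with $\varphi \leq 1$ and $\supp \varphi \subset \frac{1}{2}B$, we have $w \in W^{s,p}(\bbR^n) \cap L^\infty(\bbR^n)$ with $\supp w \subset \frac{1}{2}B$; the finiteness of $\cE(w,w)$ follows from compact support, the $L^\infty$ bound, and the exponent assumptions \eqref{Assumption:Intro:Exp}--\eqref{Assumption:Intro:Exp2} (in particular $tq \leq sp$). Consequently Theorem \ref{thm:AdmissibleTestFxn} applies and
\begin{equation*}
\cE(u, \varphi^q u) = \intdm{\bbR^n}{f(x) \varphi^q(x) u(x)}{x}.
\end{equation*}
I would then split the left-hand integral into the ``interior'' contribution over $B \times B$ and the ``exterior'' contribution over $B \times (\bbR^n \setminus B)$ (combined with its symmetric reflection).

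The main technical step is to establish pointwise algebraic lower bounds on the interior integrand. For the $p$-contribution the required inequality takes the form
\begin{align*}
&|u(x) - u(y)|^{p-2}(u(x) - u(y))\bigl(\varphi^q(x)u(x) - \varphi^q(y)u(y)\bigr) \\
&\qquad \geq c|\varphi^{q/p}(x)u(x) - \varphi^{q/p}(y)u(y)|^p - C(|u(x)|^p + |u(y)|^p)|\varphi^{q/p}(x) - \varphi^{q/p}(y)|^p,
\end{align*}
with the analogous inequality for the $q$-contribution, replacing $\varphi^{q/p}$ by $\varphi$ and $p$ by $q$ throughout. These are variants of the classical pointwise Caccioppoli lemma for the fractional $p$-Laplacian (cf.\ Di Castro--Kuusi--Palatucci), designed so that the intrinsic quantities $\varphi^{q/p} u$ and $\varphi u$ appearing on the LHS of \eqref{eq:CaccioppoliEstimate} arise naturally on the ``good'' side. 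The error terms are controlled via $|\varphi^{q/p}(x) - \varphi^{q/p}(y)| \leq C |\varphi(x) - \varphi(y)| \leq C|x-y|/R$ (valid since $q/p \geq 1$ and $\varphi \leq 1$), which after integration over $B \times B$ yields the $R^{-sp} \int_B |u|^p$ term for the $p$-part and the $\iint a\, |\varphi(x)-\varphi(y)|^q |u|^q$ term for the $q$-part. I expect this algebraic step to be the main technical obstacle, since both the $p$- and $q$-inequalities must be established simultaneously with sharp constants.

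For the exterior integral, $w \equiv 0$ outside $\frac{1}{2}B$, so when $y \notin B$ the test-function difference reduces to $\varphi^q(x) u(x)$. Using $|u(x)-u(y)|^{p-1} \leq 2^{p-1}(|u(x)|^{p-1} + |u(y)|^{p-1})$ and the geometric bound $|x-y| \geq \tfrac{1}{2}|x_0-y|$ for $x \in \frac{1}{2}B$ and $y \notin B$ produces exactly the nonlocal tail terms on the RHS of \eqref{eq:CaccioppoliEstimate}, with the $|u(x)|^p\varphi^q(x)$ and $|u(x)|^q \varphi^q(x)$ pieces absorbed into the $R^{-sp}\int_B |u|^p$ term. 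Finally, the data integral $\int f \varphi^q u\,\rmd x$ is estimated by H\"older's inequality with conjugate exponents $p_*$ and $p^* = (p_*)'$: observing that $|\varphi^q u| \leq |\varphi^{q/p} u|$ (since $\varphi \leq 1$ and $q \geq q/p$), I would then apply the fractional Sobolev embedding of Theorem \ref{thm:SobolevInequality} to the compactly supported function $\varphi^{q/p} u$ to bound $\|\varphi^{q/p} u\|_{L^{p^*}(B)}$ by its $W^{s,p}$-seminorm on $B$. A final Young's inequality with exponents $p, p'$ absorbs this seminorm into the LHS of \eqref{eq:CaccioppoliEstimate}, leaving the desired $R^{n+sp'}\bigl(\fint_B |f|^{p_*}\bigr)^{p'/p_*}$ contribution.
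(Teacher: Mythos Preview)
Your proposal is correct and follows essentially the same route as the paper: test with $\varphi^q u$ via Theorem~\ref{thm:AdmissibleTestFxn}, split into interior and exterior parts, establish the pointwise algebraic lower bounds on $B\times B$ with $\widetilde\varphi=\varphi^{q/p}$ for the $p$-term and $\varphi$ for the $q$-term, handle the tail via $|x-y|\geq\tfrac12|x_0-y|$, and treat the data term by H\"older, fractional Sobolev embedding applied to $\varphi^{q/p}u$, and Young's inequality with absorption. One small bookkeeping correction: the exterior $|u(x)|^q\varphi^q(x)$ contribution from the $q$-phase is not absorbed into $R^{-sp}\int_B|u|^p$ but is retained explicitly on the right-hand side of \eqref{eq:CaccioppoliEstimate} (inside the $a(x,y)\varphi^q(x)|u(x)|^q/|x_0-y|^{n+tq}$ term), which is exactly how the statement is written.
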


\begin{proof}
Following standard approaches, we take $\varphi^q u$ as the test function in \eqref{eq:Intro:WeakFormulation}. We can make this choice by using Theorem \ref{thm:AdmissibleTestFxn}.
Indeed, clearly $\varphi^q u \in W^{s,p}(\bbR^n) \cap  L^{\infty}(\bbR^n)$, $\supp \varphi^q u \subset \frac{1}{2}B$. Moreover,   
\begin{equation*}
\begin{split}
\cE(\varphi^q u,\varphi^q u) &\leq C \iintdm{\bbR^n}{\bbR^n}{\frac{|\varphi^q(x) - \varphi^q(y)|^p}{|x-y|^{n+sp}} |u(y)|^p }{y}{x} + C \iintdm{\bbR^n}{\bbR^n}{\frac{|u(x) - u(y)|^p}{|x-y|^{n+sp}} |\varphi(x)|^p }{y}{x} \\
	&\quad + C\iintdm{\bbR^n}{\bbR^n}{a(x,y) \frac{|\varphi^q(x) - \varphi^q(y)|^q}{|x-y|^{n+tq}} |u(y)|^q }{y}{x}\\
	& \quad\quad+ C \iintdm{\bbR^n}{\bbR^n}{a(x,y) \frac{|u(x) - u(y)|^q}{|x-y|^{n+tq}} |\varphi(x)|^q }{y}{x} \\
	&\leq C \cE(u,u) + C \max \{ \Vnorm{u}_{L^{\infty}}^p, \Vnorm{u}_{L^{\infty}}^q \} \cdot \cE(\varphi^q,\varphi^q) < \infty\,.
\end{split}
\end{equation*}
Using $\varphi^q u$ in the definition \eqref{eq:Intro:WeakFormulation} we have as in \eqref{eq:AdmissibleTestFxn:MainResult} that $\cE(u,\varphi^q u) = \intdm{B}{\varphi^q(x) f(x) \cdot u(x)}{x}$. 
Writing $\cE(u,\varphi^q u) = \rmI+ {\rm II}$ where 
\begin{equation*}
\begin{split}
{\rm I} &= \iintdm{B}{B}{\left[\frac{|u(x)-u(y)|^{p-2}}{|x-y|^{n+sp}} (u(x)-u(y)) (\varphi^q(x) u(x)-\varphi^q(y)u(y))\right. \\
		&\qquad + \left.a(x,y) \frac{|u(x)-u(y)|^{q-2}}{|x-y|^{n+tq}} (u(x)-u(y))(\varphi^q(x) u(x)-\varphi^q(y)u(y))\right]}{y}{x} \\
	{\rm II}&= 2\iintdm{B}{\bbR^n \setminus B}{\left[\frac{|u(x)-u(y)|^{p-2}}{|x-y|^{n+sp}} (u(x)-u(y)) \varphi^q(x) u(x)\right. \\
		&\qquad + \left.a(x,y) \frac{|u(x)-u(y)|^{q-2}}{|x-y|^{n+tq}} (u(x)-u(y)) \varphi^q(x) u(x)\right]}{y}{x}
\end{split}
\end{equation*}
we will estimate each integral separately, then collect terms. 

\noindent{\bf Estimate of $\rmI$.} Write 
\begin{equation*}
\begin{split}
\rmI = \iintdm{B}{B}{& \frac{|u(x)-u(y)|^{p-2}}{|x-y|^{n+sp}} (u(x)-u(y)) (\varphi^q(x) u(x)-\varphi^q(y)u(y))}{y}{x} \\
	&\quad + \iintdm{B}{B}{ a(x,y) \frac{|u(x)-u(y)|^{q-2}}{|x-y|^{n+tq}} (u(x)-u(y))(\varphi^q(x) u(x)-\varphi^q(y)u(y))}{y}{x} := \rmI_1 + \rmI_2\,.
\end{split}
\end{equation*}
We will estimate $\rmI_1$ first, and a similar estimate will hold for $\rmI_2$. 

We assume first that $\varphi(x) \geq \varphi(y)$. By adding and subtracting $\varphi^q(x) u(y)$,
\begin{equation}\label{eq:Caccioppoli:IEstimate1}
\begin{split}
|u(x)-u(y)|^{p-2} & (u(x)-u(y)) (\varphi^q(x)u(x) - \varphi^q(y) u(y)) \\
	&= \varphi^q(x) |u(x) - u(y)|^p
	+ \big( \varphi^q(x) - \varphi^q(y) \big) |u(x)-u(y)|^{p-2} (u(x)-u(y)) u(y) \\
	&= \varphi^q(x) |u(x) - u(y)|^p + \rmR_1\,.
\end{split}
\end{equation}
We will bound $\rmR_1$ from below. Set 
\begin{equation*}
\widetilde{\varphi}(x) := \varphi^{q/p}(x)\,.
\end{equation*}
Then $|\widetilde{\varphi}(x)| \leq |\varphi(x)|$ since $0 \leq \varphi \leq 1$ and $q/p \geq 1$, and
\begin{equation*}
| \grad[ \widetilde{\varphi}(x) ]| = \left| \frac{q}{p} \varphi^{q/p - 1}(x) \grad \varphi(x) \right| \leq \frac{q}{p} |\grad \varphi(x)|\,.
\end{equation*}
Now, by the assumption $\varphi(x) \geq \varphi(y)$ we have $\widetilde{\varphi}(x) \geq \widetilde{\varphi}(y)$, and so
\begin{equation}\label{eq:Caccioppoli:IEstimate2}
\begin{split}
\varphi^q(x) - \varphi^q(y) &= p \big( \sigma \widetilde{\varphi}(x) + (1-\sigma) \widetilde{\varphi}(y) \big)^{p-1} (\widetilde{\varphi}(x) - \widetilde{\varphi}(y)) \\
	&\geq - p \big| \sigma \widetilde{\varphi}(x) + (1-\sigma) \widetilde{\varphi}(y) \big|^{p-1} |\widetilde{\varphi}(x) - \widetilde{\varphi}(y)| \geq - p |\widetilde{\varphi}|^{p-1} |\widetilde{\varphi}(x) - \widetilde{\varphi}(y)|\,,
\end{split}
\end{equation}
where $\sigma$ is some value in $[0,1]$. Then using \eqref{eq:Caccioppoli:IEstimate2} and Young's Inequality,
\begin{equation}\label{eq:Caccioppoli:IEstimate3}
\begin{split}
\rmR_1 &= p \big( \sigma \widetilde{\varphi}(x) + (1-\sigma) \widetilde{\varphi}(y) \big)^{p-1} (\widetilde{\varphi}(x) - \widetilde{\varphi}(y)) |u(x)-u(y)|^{p-2} (u(x)-u(y)) u(y) \\
	&\geq - p | \widetilde{\varphi} |^{p-1} |\widetilde{\varphi}(x) - \widetilde{\varphi}(y)| |u(x)-u(y)|^{p-1} |u(y)|  \\
	&\geq - \frac{1}{p'} \varphi^q(x) |u(x)-u(y)|^p - p^{p-1} |\widetilde{\varphi}(x)-\widetilde{\varphi}(y)|^p |u(y)|^p\,.
\end{split}
\end{equation}
Combining \eqref{eq:Caccioppoli:IEstimate1} and \eqref{eq:Caccioppoli:IEstimate3} gives
\begin{equation}\label{eq:Caccioppoli:IEstimate4}
	\begin{split}
|u(x)-u(y)|^{p-2} (u(x)-u(y)) (\varphi^q(x)u(x) - \varphi^q(y) u(y)) &\geq C\varphi^q(x) |u(x) - u(y)|^p \\
&\qquad -  C'|\widetilde{\varphi}(x)-\widetilde{\varphi}(y)|^p |u(y)|^p
\end{split}
\end{equation}
in the case that $\varphi(x) \geq \varphi(y)$. Now we assume that $\varphi(y) \geq \varphi(x)$. By adding and subtracting $\varphi^q(y) u(x)$ and proceeding similarly to the first case,
{\small \begin{equation}\label{eq:Caccioppoli:IEstimate5}
|u(x)-u(y)|^{p-2} (u(x)-u(y)) (\varphi^q(x)u(x) - \varphi^q(y) u(y)) \geq C \varphi^q(y) |u(x) - u(y)|^p -  |\widetilde{\varphi}(x)-\widetilde{\varphi}(y)|^p |u(x)|^p\,.
\end{equation}}
Using symmetry and the estimates \eqref{eq:Caccioppoli:IEstimate4} and \eqref{eq:Caccioppoli:IEstimate5} gives
\begin{equation}\label{eq:Caccioppoli:IEstimate6}
\rmI_1 \geq C \iintdm{B}{B}{\frac{|u(x)-u(y)|^p}{|x-y|^{n+sp}} \max \{\varphi^q(x),\varphi^q(y) \} }{y}{x} - C' \iintdm{B}{B}{\frac{|\widetilde{\varphi}(x) - \widetilde{\varphi}(y)|^p}{|x-y|^{n+sp}} |u(x)|^p }{y}{x} \,,
\end{equation}
where $C$ and $C'$ depend only on $p$. Finally, since
\begin{equation*}
 \left| \widetilde{\varphi}(x) u(x) - \widetilde{\varphi}(y) u(y) \right|^p \leq 2^{p-1} \varphi^q(y) \left| u(x) - u(y)  \right|^p + 2^{p-1} |u(x)|^p |\widetilde{\varphi}(x) - \widetilde{\varphi}(y)|^p
\end{equation*}
we obtain
\begin{equation}\label{eq:Caccioppoli:IEstimate7}
\rmI_1 \geq C \iintdm{B}{B}{\frac{|\widetilde{\varphi}(x) u(x) - \widetilde{\varphi}(y) u(y)|^p}{|x-y|^{n+sp}} }{y}{x} - C \iintdm{B}{B}{\frac{|\widetilde{\varphi}(x) - \widetilde{\varphi}(y)|^p}{|x-y|^{n+sp}} |u(x)|^p }{y}{x} \,.
\end{equation}
We then proceed in exactly a similar way to bound $\rmI_2$, with $q$ taking the role of $p$ and $\varphi$ taking the role of $\widetilde{\varphi}$. The resulting estimate is
\begin{equation}\label{eq:Caccioppoli:IEstimate8}
\rmI_2 \geq C \iintdm{B}{B}{a(x,y) \frac{|\varphi(x) u(x) - \varphi(y) u(y)|^q}{|x-y|^{n+tq}} }{y}{x} - C \iintdm{B}{B}{a(x,y) \frac{|\varphi(x) - \varphi(y)|^q}{|x-y|^{n+tq}} |u(x)|^q }{y}{x}\,.
\end{equation}

\noindent{\bf Estimate of $\mathrm{II}$.} Directly, we have 
\begin{align*}
|u(x)-u(y)|^{p-2} (u(x)-u(y)) \varphi^q(x) u(x)  &\geq -|u(x)-u(y)|^{p-1}  \varphi^q(x) |u(x)|\\
&\geq -2^{p-2}\left(\varphi^q(x) |u(x)|^{p} +  \varphi^q(x) |u(x)||u(y)|^{p-1}\right)
\end{align*}
where the last inequality follows from the algebraic inequality $(a+b)^{p-1} \leq 2^{p-2}(a^{p-1} + b^{p-1})$ (valid since $p\geq 2$).  Similarly, we have 
\[
|u(x)-u(y)|^{q-2} (u(x)-u(y)) \varphi^q(x) u(x) \geq -2^{q-2}\left(\varphi^q(x) |u(x)|^{q} +  \varphi^q(x) |u(x)||u(y)|^{q-1}\right).
\]
We then estimate {\rm II} as 
\begin{equation}\label{eq:Caccioppoli:IIEstimate2}
\begin{split}
\mathrm{II}&\geq - C \iintdm{B}{\bbR^n \setminus B}{\left[\varphi^q(x) \frac{|u(x)|^{p} + |u(y)|^{p-1}|u(x)|}{|x-y|^{n+sp}}\right.\\
&\qquad+\left.a(x,y) \varphi^q(x) \frac{|u(x)|^{q}  + |u(y)|^{q-1}|u(x)|}{|x-y|^{n+tq}}\right]}{y}{x}\,.
\end{split}
\end{equation}
Now, for every $x \in \supp \varphi \subset \frac{1}{2}B$ and every $y \in \bbR^n \setminus B$
\begin{equation*}
\begin{split}
|x - y| &\geq |y - x_0| - |x-x_0| \geq R - \frac{R}{2} = \frac{R}{2}\,,
\end{split}
\end{equation*}
so
\begin{equation*}
\frac{|x_0-y|}{|x-y|} \leq \frac{|x_0 - x| + |x-y|}{|x-y|} = 1 + \frac{|x_0 - x|}{|x-y|} \leq 2\,.
\end{equation*}
Thus we can replace $|x-y|$ with $|x_0 - y|$ in \eqref{eq:Caccioppoli:IIEstimate2}, which gives
\begin{equation}\label{eq:Caccioppoli:IIEstimate3}
\begin{split}
\mathrm{II}&\geq - C \iintdm{B}{\bbR^n \setminus B}{\left[\varphi^q(x) \frac{|u(x)|^{p} + |u(y)|^{p-1}|u(x)|}{|x_0-y|^{n+sp}}\right.\\
&\qquad+\left.a(x,y) \varphi^q(x) \frac{|u(x)|^{q}  + |u(y)|^{q-1}|u(x)|}{|x_0-y|^{n+tq}}\right]}{y}{x}\,\\
&\geq -C\left[\iintdm{B}{\bbR^n \setminus B}{\varphi^q(x) \frac{|u(x)|^{p} + |u(y)|^{p-1}|u(x)| }{|x_0-y|^{n+sp}}}{y}{x}\right.\\
&\qquad+\left.\iintdm{B}{\bbR^n \setminus B}{\varphi^q(x)a(x,y)\frac{|u(x)|^{q} + |u(y)|^{q-1}|u(x)|}{|x_0-y|^{n+tq}}}{y}{x}\right]
\end{split}
\end{equation}
Simplifying further, we obtain 
\[
\begin{split}
\mathrm{II}&\geq - {C\over R^{sp}} \intdm{B}{\varphi^q(x) |u(x)|^{p}}{x} - C \iintdm{B}{\bbR^n \setminus B}{\varphi^q(x)|u(x)| \frac{|u(y)|^{p-1}}{|x_0-y|^{n+sp}}}{y}{x}\\
&\qquad-C \iintdm{B}{\bbR^n \setminus B}{a(x,y) \varphi^q(x) \frac{|u(x)|^{q}  + |u(y)|^{q-1}|u(x)|}{|x_0-y|^{n+tq}}}{y}{x}.\,
\end{split}
\]
Finally we estimate the right hand side $\int_{B} \varphi^{q} f(x) u(x)dx$. To that end, by H\"older's inequality and since $|\varphi| \leq 1$
\begin{equation*}
\begin{split}
\int_{B} \varphi^{q} f(x) u(x)dx&\leq \left( \intdm{B}{|\widetilde{\varphi}(x) u(x)|^{p^*}}{x} \right)^{1/p^*} \left( \intdm{B}{|f(x)|^{p_*}}{x} \right)^{1/p_*} \\
	& = R^n \left( \fint_B |\widetilde{\varphi}(x) u(x)|^{p^*} \, \rmd x \right)^{1/p^*} \left( \fint_B |f(x)|^{p_*} \, \rmd x \right)^{1/p_*}\,.
\end{split}
\end{equation*}
Apply the Poincar\'e-Sobolev inequality (Theorem  \ref{thm:SobolevInequality}) to $\widetilde{\varphi} u$ to obtain
\begin{equation*}
\begin{split}
\int_{B} \varphi^{q} f(x) u(x)dx&\leq C R^{n/p'+s} \left( \int_B \int_B \frac{|\widetilde{\varphi}(x) u(x) - \widetilde{\varphi}(y) u(y)|^p}{|x-y|^{n+sp}} \, \rmd y \, \rmd x \right)^{1/p} \left( \fint_B |f(x)|^{p_*} \, \rmd x \right)^{1/p_*}\,.
\end{split}
\end{equation*}
By Young's inequality with $\sigma \in (0,1)$ suitably small, 
\begin{equation}\label{eq:Caccioppoli:IVEstimate1}
\int_{B} \varphi^{q} f(x) u(x)dx\leq \frac{C}{\sigma} R^{n+sp'} \left( \fint_B |f(x)|^{p_*} \, \rmd x \right)^{p'/p_*} + \sigma \int_B \int_B \frac{|\widetilde{\varphi}(x) u(x) - \widetilde{\varphi}(y) u(y)|^p}{|x-y|^{n+sp}} \, \rmd y \, \rmd x\,.
\end{equation}
We put together \eqref{eq:Caccioppoli:IEstimate7}, \eqref{eq:Caccioppoli:IEstimate8}, \eqref{eq:Caccioppoli:IIEstimate3}, and \eqref{eq:Caccioppoli:IVEstimate1}, and using the symmetry of $a$, we conclude that there exists $C = C(\texttt{data})$ and an arbitrarily small $\sigma \in (0,1)$ such that
\begin{equation}\label{eq:Caccioppoli:MajorEstimate1}
\begin{split}
& \iintdm{B}{B}{ \frac{|\widetilde{\varphi}(x) u(x) - \widetilde{\varphi}(y) u(y)|^p}{|x-y|^{n+sp}} }{y}{x} 
+  \iintdm{B}{B}{a(x,y) \frac{|\varphi(x) u(x) - \varphi(y) u(y)|^q}{|x-y|^{n+tq}} }{y}{x} \\
	&\leq C \iintdm{B}{B}{\frac{|\widetilde{\varphi}(x) - \widetilde{\varphi}(y)|^p}{|x-y|^{n+sp}} |u(x)|^p }{y}{x} + C \iintdm{B}{B}{a(x,y) \frac{|\varphi(x) - \varphi(y)|^q}{|x-y|^{n+tq}} |u(x)|^q }{y}{x}\\
	& + {C\over R^{sp}} \intdm{B}{\varphi^q(x) |u(x)|^{p}}{x} + C \iintdm{B}{\bbR^n \setminus B}{\varphi^q(x)|u(x)| \frac{|u(y)|^{p-1}}{|x_0-y|^{n+sp}}}{y}{x}\\
&\qquad C \iintdm{B}{\bbR^n \setminus B}{a(x,y) \varphi^q(x) \frac{|u(x)|^{q}  + |u(y)|^{q-1}|u(x)|}{|x_0-y|^{n+tq}}}{y}{x}.\,\\
	&\quad + \frac{C}{\sigma} R^{n+sp'} \left( \fint_B |f(x)|^{p_*} \, \rmd x \right)^{p'/p_*} + \sigma \int_B \int_B \frac{|\widetilde{\varphi}(x) u(x) - \widetilde{\varphi}(y) u(y)|^p}{|x-y|^{n+sp}} \, \rmd y \, \rmd x\,. 
\end{split}
\end{equation}
Now, since $|\grad \widetilde{\varphi}| \leq |\grad \varphi| \leq \frac{C}{R}$ the first integral on the right-hand side of \eqref{eq:Caccioppoli:MajorEstimate1} can be majorized by
\begin{equation}\label{eq:Caccioppoli:Piece1}
\frac{C}{R^p} \int_B |u(x)|^p \int_{B}  |x-y|^{-n+(1-s)p} \, \rmd y \, \rmd x \leq \frac{C}{R^{sp}} \int_B |u(x)|^p \, \rmd x\,,
\end{equation}
where the constant $C$ is independent of $R$. We also use the fact that $\varphi \in (0, 1)$ and choosing $\sigma \in (0, 1)$ to absorb the last term on the right-hand side we obtain 
\begin{equation}\label{eq:Caccioppoli:MajorEstimate2}
\begin{split}
& \iintdm{B}{B}{ \frac{|\widetilde{\varphi}(x) u(x) - \widetilde{\varphi}(y) u(y)|^p}{|x-y|^{n+sp}} }{y}{x} 
+  \iintdm{B}{B}{a(x,y) \frac{|\varphi(x) u(x) - \varphi(y) u(y)|^q}{|x-y|^{n+tq}} }{y}{x} \\
	&\leq \frac{C}{R^{sp}} \int_B |u(x)|^p \, \rmd x + C \iintdm{B}{B}{a(x,y) \frac{|\varphi(x) - \varphi(y)|^q}{|x-y|^{n+tq}} |u(x)|^q }{y}{x}\\
	&+ C \intdm{B}{\varphi^q(x) |u(x)| }{x} \intdm{\bbR^n \setminus B}{\frac{|u(y)|^{p-1}}{|x_0-y|^{n+sp}}}{y} \\
	&+C \iintdm{B}{\bbR^n \setminus B}{a(x,y) \varphi^q(x) \frac{|u(x)|^{q} + |u(y)|^{q-1}|u(x)|}{|x_0-y|^{n+tq}}}{y}{x} + \frac{C}{\sigma} R^{n+sp'} \left( \fint_B |f(x)|^{p_*} \, \rmd x \right)^{p'/p_*}\,.
\end{split}
\end{equation}
That concludes the proof. 
\end{proof}

\begin{remark} As a follow up to Remark \ref{remarka=0}, in the event $a\equiv 0$, then the assumption $u\in L^{\infty}$ is not necessary for the validity  of the Caccioppoli inequality. 
\end{remark}

\section{Reverse H\"older Inequality}\label{sec:RevHolder}

\subsection{The Dual Pair}

We summarize some basic properties of the measure $\nu$ defined in \eqref{eq:MeasureDefn}. These properties are natural extensions of those established in \cite[Proposition 4.1]{kuusi2015}
\begin{theorem}\label{thm:Measure}
For any $\veps \in (0,1/p)$, the measure $\nu$ defined as
\begin{equation*}
\nu(\cA) := \intdm{\cA}{\frac{1}{|x-y|^{n-\veps p}}}{y}\, \rmd x\,, \qquad \cA \subset \bbR^{2n}\,,
\end{equation*}
is absolutely continuous with respect to Lebesgue measure on $\bbR^{2n}$. Additionally,
\begin{itemize}
\item For $\cB = B_R(x_0) \times B_R(x_0)$,
\begin{equation}\label{eq:MeasureOfBall}
\nu(\cB) = \frac{c(n,p,\veps) R^{n+\veps p}}{\veps}\,,
\end{equation}
where $c(n,p,\veps)$ is a constant depending only on $n$, $p$ and $\veps$ that satisfies $1/\widetilde{c}(n,p) \leq c(n,p,\veps) \leq \widetilde{c}(n,p)$, where $\widetilde{c}$ is another constant depending only on $n$ and $p$. 

\item For every $x \in \bbR^n$ and for $R \geq r > 0$, \begin{equation}\label{eq:MeasureDoublingProperty}
\frac{\nu( \cB(x,R))}{\nu( \cB(x,r))} = \left( \frac{R}{r} \right)^{n+\veps p}\,.
\end{equation}

\item For every $a \leq 1$, $R> 0$ and $x \in \bbR^n$, there exists a constant $C_d = C_d(n,p)$ such that
\begin{equation}\label{eq:MeasureOfBallAndCube}
\frac{\nu(\cB(x,R))}{\nu(K_1 \times K_2)} \leq \frac{C_d}{a^{2n} \veps}
\end{equation}
for any two cubes $K_1$, $K_2 \subset B_R(x)$ with sides parallel to the coordinate axes and such that $|K_1| = |K_2| = (aR)^n$.
\end{itemize}
\end{theorem}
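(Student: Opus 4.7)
My plan is to derive all four claims from direct integration against the density $|x-y|^{-n+\veps p}$, together with one scaling substitution. Throughout I would exploit that $\veps p\in(0,1)\subset(0,n)$, so both $-n+\veps p>-n$ (integrability of the density near the diagonal) and $n-\veps p>0$ (the density is monotone decreasing in $|x-y|$) hold simultaneously.

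For absolute continuity I would note only that $|x-y|^{-n+\veps p}$ is locally integrable on $\bbR^{2n}$: fixing $x$ and using polar coordinates in $y$ reduces the inner integral to a multiple of $\int_0^R r^{\veps p-1}\,dr$, which is finite because $\veps p>0$. For the identity \eqref{eq:MeasureOfBall}, I would change variables $x = x_0+R\xi$, $y = x_0+R\eta$ to pull out a factor $R^{n+\veps p}$, leaving a dimensionless double integral over $B_1(0)\times B_1(0)$. I would then \emph{define}
\[
c(n,p,\veps):=\veps\int_{B_1(0)}\int_{B_1(0)}\frac{d\eta\,d\xi}{|\xi-\eta|^{n-\veps p}},
\]
which yields \eqref{eq:MeasureOfBall} by construction. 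The two-sided bound on $c$ I would obtain via polar coordinates in the inner integral: for each $\xi\in B_1$, $\int_{B_1}|\xi-\eta|^{\veps p-n}\,d\eta\leq \omega_{n-1}\,2^{\veps p}/(\veps p)$; conversely, for $\xi\in B_{1/4}(0)$ the inclusion $B_{1/4}(\xi)\subset B_1(0)$ gives the matching lower bound $\omega_{n-1}(1/4)^{\veps p}/(\veps p)$. The prefactor $\veps$ cancels the $1/(\veps p)$, and since $\veps p\in(0,1)$ the residual factors $2^{\veps p}$, $(1/4)^{\veps p}$ stay in $[1/4,2]$, trapping $c(n,p,\veps)$ between positive constants depending only on $n$ and $p$. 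The doubling ratio \eqref{eq:MeasureDoublingProperty} is then immediate from \eqref{eq:MeasureOfBall}, since $c$ is $R$-independent and cancels.

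For \eqref{eq:MeasureOfBallAndCube} I would use the crude observation that $K_1,K_2\subset B_R(x)$ forces $|x'-y|\leq C(n)R$ on $K_1\times K_2$; because $n-\veps p>0$ the density is monotone decreasing in $|x'-y|$, so $|x'-y|^{-n+\veps p}\geq (C(n)R)^{-n+\veps p}$ pointwise. Integrating yields $\nu(K_1\times K_2)\geq C'(n,p)\,a^{2n}\,R^{n+\veps p}$, and combining with the upper bound $\nu(\cB(x,R))\leq \tilde c(n,p)\,R^{n+\veps p}/\veps$ from \eqref{eq:MeasureOfBall} produces \eqref{eq:MeasureOfBallAndCube} with $C_d=C_d(n,p)$.

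The main (essentially only) obstacle is the uniformity of the two-sided bound on $c(n,p,\veps)$ as $\veps\downarrow 0$: the $1/(\veps p)$ produced by the radial integration is precisely what the prefactor $1/\veps$ in \eqref{eq:MeasureOfBall} is designed to absorb, so it must be extracted cleanly on both ends of the estimate, and the residual $\veps$-dependence through $2^{\veps p}$ and $(1/4)^{\veps p}$ must be verified to be benign on $(0,1/p)$. Once that uniformity is in hand, the rest is arithmetic.
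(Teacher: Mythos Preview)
Your proposal is correct and follows essentially the same argument as the paper: scaling to the unit ball, defining $c(n,p,\veps)=\veps\,\nu(\cB(0,1))$, bounding the inner radial integral above and below via ball inclusions to extract the $1/(\veps p)$ factor, and for \eqref{eq:MeasureOfBallAndCube} using the diameter bound $|x'-y|\le 2R$ on $K_1\times K_2$ together with monotonicity of the density. The only differences are cosmetic (your $B_{1/4}$ versus the paper's $B_{1/2}$ in the lower bound), so there is nothing to add.
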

\begin{proof}
The identity \eqref{eq:MeasureOfBall} follows from the definition of the measure $\nu$ and a scaling argument. Indeed, for $r=1$ and $x = 0$
\begin{equation*}
\begin{split}
\nu(\cB(0,1)) &= \iintdm{B_1(0)}{B_1(0)}{\frac{1}{|x-y|^{n-\veps p}}}{y}{x} = \iintdm{B_1(0)}{B_1(x)}{\frac{1}{|y|^{n-\veps p}}}{y}{x} \\
	&\leq \iintdm{B_1(0)}{B_2(0)}{\frac{1}{|y|^{n-\veps p}}}{y}{x} = \frac{(\omega_{n-1})^2 2^{\veps p}}{\veps p}\,,
\end{split}
\end{equation*}
and on the other hand since $B_{\sfrac{1}{2}}(0) \subset B_1(x)$ for every $x \in B_{\sfrac{1}{2}}(0)$
\begin{equation*}
\begin{split}
\nu(\cB(0,1)) = \iintdm{B_1(0)}{B_1(x)}{\frac{1}{|y|^{n-\veps p}}}{y}{x} &\geq \iintdm{B_{1/2}(0)}{B_1(x)}{\frac{1}{|y|^{n-\veps p}}}{y}{x} \\
	&\geq \iintdm{B_{1/2}(0)}{B_{1/2}(0)}{\frac{1}{|y|^{n-\veps p}}}{y}{x} = \frac{\omega_{n-1}}{n 2^n}\frac{\omega_{n-1}}{ 2^{\veps p} \veps p}\,.
\end{split}
\end{equation*}
Thus $\nu(\cB(0,1)) = \veps^{-1} c(n,p,\veps)$, where $ \frac{(\omega_{n-1})^2}{np 2^{n+p}}\leq  c(n,p,\veps) \leq  \frac{(\omega_{n-1})^2 2^p}{p}$. Then a scaling and translation argument gives \eqref{eq:MeasureOfBall}. The doubling property \eqref{eq:MeasureDoublingProperty} follows from \eqref{eq:MeasureOfBall}. To see \eqref{eq:MeasureOfBallAndCube}, note that $|x-y| < 2R$ for $x \in K_1$ and $y \in K_2$ since $K_1$ and $K_2 \subset B_R(x)$. Thus by \eqref{eq:MeasureOfBall}
\begin{equation*}
\begin{split}
\nu(\cB(x,R)) = \frac{c(n,p,\veps)}{\veps} R^{n+\veps p} &= \frac{c(n,p,\veps)}{\veps} \frac{1}{a^{2n} R^{n-\veps p}} \iintdm{K_1}{K_2}{}{x}{y} \\
&\leq \frac{C(n,p)}{a^{2n} \veps} \iintdm{K_1}{K_2}{\frac{1}{|x-y|^{n-\veps p}}}{y}{x} = \frac{C_d}{a^{2n}\veps} \nu(K_1 \times K_2)\,,
\end{split}
\end{equation*}
which is \eqref{eq:MeasureOfBallAndCube}.
\end{proof}

\subsection{Reverse H\"older Inequality} Recall that 
\begin{equation}\label{eq:DefnOfUandF}
U(x,y) = \frac{|u(x)-u(y)|}{|x-y|^{s+\veps}}\,, \qquad\text{and define\,\,} F(x,y) := |f(x)|\,.
\end{equation}
Then $F \in L^{p_* + \delta}_{loc}(\bbR^{2n})$ for every $\delta \in (0,\delta_0)$, as a direct calculation using Theorem \ref{thm:Measure} shows.

We now report the compatibility of the Sobolev-Poincar\'e inequality with the definition of $U$. Given $B = B_R(x_0)$, define $\tau \in (0,1)$, and $\eta \in (1,\infty)$ to be differentiability and integrability constants respectively that have yet to be fixed. Letting $ \textstyle \veps \in (0,\min\{{s\over p}, 1-s\})$ and using \eqref{eq:MeasureOfBall},
\begin{equation*}
\fint_B \int_B \frac{\left| u(x) - u(y) \right|^{\eta}}{|x-y|^{n+\tau {\eta}}} \, \rmd y \, \rmd x =  \frac{C R^{\veps p}}{\veps}\fint_{\cB} U^{\eta} \, \rmd \nu
\end{equation*}
so long as
\begin{equation*}
\tau + \frac{\veps p}{\eta} = s + \veps\,.
\end{equation*}
Since $ \textstyle \veps \in (0,{s\over p})$ and $\veps < 1-s$ the exponent $\tau$ remains in $(0,1)$ for every $\eta \in (1,\infty)$. With this choice of $\tau$, by the fractional Sobolev inequality
\begin{equation*}
\left( \fint_B \left| \frac{u(x) - (u)_B}{R^{\tau}} \right|^m \, \rmd x \right)^{1/m} \leq C \left( \fint_B \int_B \frac{\left| u(x) - u(y) \right|^{\eta}}{|x-y|^{n+\tau \eta}} \, \rmd y \, \rmd x \right)^{1/\eta}
\end{equation*}
for every $m \in [1,\eta^{*_{\tau}}]$ with $\eta \in (1,\infty)$.
We choose $\eta$ to satisfy the relation
\begin{equation}\label{eq:DefnOfEta}
p = \eta^{*_{\tau}} = \frac{n \eta}{n - \tau \eta} = \frac{n \eta}{n- \eta(s + \veps - \frac{\veps p}{\eta})} \quad \Longleftrightarrow \quad \eta = \frac{np  +  \veps p^2}{n + sp + \veps p}\,.
\end{equation}
This choice of $\eta$ is a valid Lebesgue exponent; note that $\eta < p$ for all $n \geq 2$ and for all $p \in (1,\infty)$, and that $\eta > 1$ so long as $p \geq 2$.
Taking $m=\eta^{*_{\tau}}$ we summarize this discussion in the following lemma:
\begin{lemma}\label{lma:SobolevEmbeddingForDualPair}
Let $\veps \in (0,s/p)$ with $\veps < 1-s$ and $p \geq 2$.
Define $\eta = \frac{np  +  \veps p^2}{n + sp + \veps p}$. Then 
\begin{equation*}
\left( \fint_B \left| u(x) - (u)_B \right|^p \, \rmd x \right)^{1/p} \leq \frac{C R^{s+\veps}}{\veps^{1/\eta}} \left( \fint_{\cB} U^{\eta} \, \rmd \nu \right)^{1/\eta}\,,
\end{equation*}
where $C = C(n,s,p)$. The same inequality holds when the ball $B$ is replaced by a cube $Q$ with sides of length $R$ and with $\cB$ replaced by $Q \times Q$.
\end{lemma}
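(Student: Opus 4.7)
The plan is to apply the fractional Sobolev--Poincar\'e inequality (Theorem \ref{thm:SobolevInequality}) on the ball $B = B_R(x_0)$ with differentiability index $\tau$ and integrability index $\eta$ chosen so that the Sobolev conjugate $\eta^{*_{\tau}}$ equals $p$, and then translate the resulting Gagliardo seminorm into an integral of $U$ against $\nu$. Writing
\[
\fint_B \int_B \frac{|u(x)-u(y)|^\eta}{|x-y|^{n+\tau \eta}} \, \rmd y \, \rmd x
= \frac{1}{|B|} \iintdm{B}{B}{\frac{U(x,y)^\eta}{|x-y|^{n + \tau \eta - (s+\veps)\eta}}}{y}{x},
\]
and comparing the exponent with $\rmd \nu = |x-y|^{-n+\veps p}\, \rmd x \, \rmd y$, the integrand is exactly $U^\eta$ against $\nu$ provided $\tau \eta = s \eta + \veps \eta - \veps p$, i.e.\ $\tau = s + \veps - \veps p/\eta$. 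Solving the constraint $\eta^{*_\tau} = p$ for $\eta$ with this $\tau$ produces the value displayed in the lemma, $\eta = (np + \veps p^2)/(n + sp + \veps p)$.

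Next I would verify the admissibility of $\eta$ and $\tau$. For $p \geq 2$ and under \eqref{Assumption:Intro:Exp2} (so $sp < n$), a short computation gives $\eta > 1$, while $\eta < p$ is immediate from the formula. For $\tau$ the identity $\eta(s+\veps) - \veps p = sn/(n+sp+\veps p) \cdot \eta \cdot (\text{positive})$ shows $\tau > 0$, and $\tau < s+\veps < 1$ follows from $\veps < 1-s$. Thus the Sobolev--Poincar\'e inequality of Theorem \ref{thm:SobolevInequality} applies with $m = p$, giving
\[
\left(\fint_B \Big| \frac{u - (u)_B}{R^\tau}\Big|^{p} \, \rmd x \right)^{1/p}
\leq C \left( \fint_B \int_B \frac{|u(x)-u(y)|^\eta}{|x-y|^{n+\tau \eta}} \, \rmd y \, \rmd x \right)^{1/\eta}.
\]

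The remaining step is to track the constants and recover the $\veps$-dependence. By Theorem \ref{thm:Measure}, $\nu(\cB) = c(n,p,\veps) R^{n+\veps p}/\veps$ with $c$ bounded between two constants depending only on $n,p$, so
\[
\fint_B \int_B \frac{|u(x)-u(y)|^\eta}{|x-y|^{n+\tau \eta}} \, \rmd y \, \rmd x
= \frac{\nu(\cB)}{|B|} \fint_{\cB} U^\eta \, \rmd \nu
= \frac{C(n,p) R^{\veps p}}{\veps} \fint_{\cB} U^\eta \, \rmd \nu.
\]
Multiplying by $R^\tau$ and noting $\tau + \veps p/\eta = s + \veps$, the right-hand side becomes $C\, R^{s+\veps} \veps^{-1/\eta} (\fint_{\cB} U^\eta\, \rmd \nu)^{1/\eta}$, which is the claimed inequality.

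The main (and only) obstacle is really bookkeeping: one must check that the chosen $\tau$ and $\eta$ lie in the admissible ranges $(0,1)$ and $[1, \infty)$ respectively under the standing assumptions, and that the $\veps^{-1}$ blow-up of $\nu(\cB)$ combined with the $1/\eta$-root yields exactly $\veps^{-1/\eta}$ and no extra factors. For the cube version, the statement follows by applying the same argument with Theorem \ref{thm:SobolevInequality} replaced by its cube analogue (which holds with identical proof on a cube $Q$ of side $R$, comparing $|x-y| \lesssim R$), since the measure computation in Theorem \ref{thm:Measure} localized to $Q \times Q$ again produces $\nu(Q\times Q) \asymp R^{n+\veps p}/\veps$.
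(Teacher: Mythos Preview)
Your proposal is correct and follows essentially the same approach as the paper: choose $\tau = s+\veps-\veps p/\eta$ so that the Gagliardo integrand becomes $U^\eta\,\rmd\nu$, pick $\eta$ via $\eta^{*_\tau}=p$, verify $\tau\in(0,1)$ and $\eta\in(1,p)$, apply Theorem~\ref{thm:SobolevInequality}, and read off the $R^{s+\veps}\veps^{-1/\eta}$ factor from $\nu(\cB)=c\,R^{n+\veps p}/\veps$. The paper carries out exactly this computation in the paragraph immediately preceding the lemma statement.
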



Recall that $G(x, y, U) =U^p + A(x,y) U^q$. We have the following $L^1_{loc}$ estimate for $G$ which will lead us to a scale-invariant reverse H\"older's inequality.  
\begin{proposition}\label{prop:RevHolder}
Let $p \in [2,\infty)$, and let $\veps < 1-s$ with $ \textstyle \veps \in ( 0, \min \{ s( \frac{tq}{sp} - \frac{1}{p'}) , \frac{s}{p} \} )$. (This choice is possible by Assumption \ref{Assumption:Intro:Exp}). Let $\eta$ be given by the formula  in \eqref{eq:DefnOfEta}, Let $B = B_R(x_0)$ be a ball with $R \leq 1$. Then there exists a constant $C$ depending only on $\texttt{data}$ such that for any solution $u \in W^{s,p}(\bbR^n) \cap L^{\infty}(\bbR^n)$ to \eqref{eq:Intro:MainEqn} and for any $\sigma \in (0,1)$
\begin{equation}\label{eq:RevHolder}
\begin{split}
\left( \fint_{\frac{1}{4}\cB} G(x,y,U) \, \rmd \nu \right)^{1/p} &\leq \frac{C}{\veps^{1/\eta - 1/p}}\Bigg[\frac{1}{\sigma } \left( \fint_{\cB} U^{\eta} \, \rmd \nu \right)^{1/\eta} \\
	&\quad + { \sigma}\sum_{k=0}^{\infty} 
	\big( 2^{-k (\frac{sp}{p-1} -s - \veps)} +  2^{-k (\frac{tq}{p-1} -s - \veps)} \big) 
	\left( \fint_{2^k \cB} U^{\eta} \, \rmd \nu \right)^{1/\eta}\Bigg]\\
	&+ \frac{C [\veps \nu(\cB)]^{\frac{\theta}{p-1}}}{\veps^{  (1/p_* - 1/p') \frac{1}{p-1}  }} \left[ \left( \fint_{\cB} F^{p_*} \, \rmd \nu \right)^{1/p_*} \right]^{1/(p-1)}\,, \\
\end{split}
\end{equation}
where
%
\begin{equation*}
\theta := \frac{s-\veps(p-1)}{n+\veps p} > 0\,.
\end{equation*}
\end{proposition}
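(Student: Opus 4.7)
The plan is to apply the Caccioppoli inequality \eqref{eq:CaccioppoliEstimate} to $v := u - (u)_B$ on a cutoff $\varphi\in C_c^\infty(B)$ with $\varphi\equiv 1$ on $\tfrac14 B$, $\supp\varphi\subset \tfrac12 B$, $|\grad\varphi|\leq C/R$, and then convert every term on the right-hand side into averages of $U^\eta$ via Lemma \ref{lma:SobolevEmbeddingForDualPair}. Since $\cE$ depends on $u$ only through the increments $u(x)-u(y)$, the function $v$ satisfies the same weak equation, has $\Vnorm{v}_{L^\infty}\leq 2\Vnorm{u}_{L^\infty}$, and produces the same $U$; crucially $\fint_B|v|^p\,dx$ \emph{is} controlled by $U^\eta$-averages through Lemma \ref{lma:SobolevEmbeddingForDualPair}, whereas $\fint_B|u|^p$ is not. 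On $\tfrac14 B\times\tfrac14 B$ the factor $\varphi$ equals $1$, so the left-hand side of Caccioppoli reduces to $\int_{\frac14\cB}G(x,y,U)\,d\nu$ after invoking the dual-pair identities $|v(x)-v(y)|^p|x-y|^{-n-sp}=U^p|x-y|^{-n+\veps p}$ and $a(x,y)|v(x)-v(y)|^q|x-y|^{-n-tq}=A(x,y)U^q|x-y|^{-n+\veps p}$.

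On the right-hand side I would dispatch the easy terms first. The diagonal term $R^{-sp}\int_B|v|^p$ is handled directly by Lemma \ref{lma:SobolevEmbeddingForDualPair}. The coefficient-gradient term $\iint_B a(x,y)|\varphi(x)-\varphi(y)|^q|x-y|^{-n-tq}|v|^q$ is bounded by $C\Vnorm{u}_{L^\infty}^{q-p} R^{-tq}\int_B|v|^p$ using $|\grad\varphi|\leq C/R$, the integrability of $|x-y|^{(1-t)q-n}$ on $B$, and $|v|^{q-p}\leq (2\Vnorm{u}_\infty)^{q-p}$; since $tq\leq sp$ by \eqref{Assumption:Intro:Exp} and $R\leq 1$, this absorbs into the diagonal term. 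The $f$-term is a direct computation using that $\fint_\cB F^{p_*}\,d\nu\asymp \fint_B |f|^{p_*}\,dx$ and $\veps\nu(\cB)\asymp R^{n+\veps p}$, which produces the last term of \eqref{eq:RevHolder}.

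The two nonlocal tails carry the real work. I would decompose $\bbR^n\setminus B=\bigsqcup_{k\geq 0}(2^{k+1}B\setminus 2^kB)$, use $|x_0-y|\geq 2^kR$ on each annulus, and bound $\int_{2^{k+1}B}|v|^{p-1}\,dy$ by H\"older. A telescoping argument $|u-(u)_B|\leq |u-(u)_{2^{k+1}B}|+\sum_{j=0}^{k}|(u)_{2^{j+1}B}-(u)_{2^jB}|$ together with Lemma \ref{lma:SobolevEmbeddingForDualPair} on each $2^{j}B$ yields
\begin{equation*}
\Bigl(\fint_{2^{k+1}B}|v|^p\Bigr)^{1/p}\leq \frac{CR^{s+\veps}}{\veps^{1/\eta}}\sum_{j=0}^{k+1}2^{j(s+\veps)}\Bigl(\fint_{2^j\cB}U^\eta\,d\nu\Bigr)^{1/\eta}.
\end{equation*}
For the $q$-tail, the bound $\Vnorm{u}_{L^\infty}$ reduces $|v|^q,|v|^{q-1}$ to $C|v|^p,C|v|^{p-1}$, so the same scheme applies with $tq$ in place of $sp$. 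Multiplying by the prefactor $\int_B\varphi^q|v|\leq R^n(\fint_B|v|^p)^{1/p}$, dividing by $\nu(\tfrac14\cB)\asymp R^{n+\veps p}/\veps$, and taking $p$-th roots, the combined tail contribution takes the shape $\frac{C}{\veps^{1/\eta-1/p}}\bigl(\fint_\cB U^\eta\,d\nu\bigr)^{1/(\eta p)}\,X^{1/p}$ where $X$ is a double series with exponent $(p-1)/\eta$ on the $(\fint_{2^j\cB}U^\eta\,d\nu)$ averages.

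The final step is to linearize this product into the form asserted in the proposition. Young's inequality with parameter $\sigma$ and conjugate exponents $p,p'=p/(p-1)$ splits it into $\sigma^{-1}(\fint_\cB U^\eta)^{1/\eta}$ and $\sigma X^{1/(p-1)}$. Because $p\geq 2$, the exponent $1/(p-1)\leq 1$ permits the subadditivity bound $(\sum a_k)^{1/(p-1)}\leq \sum a_k^{1/(p-1)}$; swapping the order of summation and invoking \eqref{eq:GeometricSeriesEstimate} collapses $X^{1/(p-1)}$ into the single series $\sum_{k\geq 0}\bigl(2^{-k(sp/(p-1)-s-\veps)}+2^{-k(tq/(p-1)-s-\veps)}\bigr)\bigl(\fint_{2^k\cB}U^\eta\,d\nu\bigr)^{1/\eta}$. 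I expect the main obstacle to be the careful bookkeeping of the $\veps$-powers through the telescope and Young steps, and checking that the lower bound $tq/sp\geq 1/p'$ in \eqref{Assumption:Intro:Exp} is precisely what is needed for convergence of the $q$-tail series (equivalently $s+\veps<tq/(p-1)$), which is guaranteed by the stated hypothesis $\veps<s(\tfrac{tq}{sp}-\tfrac1{p'})$. The $L^\infty$ hypothesis on $u$ plays an essential role in reducing $q$-growth quantities to $p$-growth ones throughout.
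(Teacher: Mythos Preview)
Your proposal is correct and follows essentially the same route as the paper: apply the Caccioppoli estimate \eqref{eq:CaccioppoliEstimate} to $u-(u)_B$ with a cutoff equal to $1$ on $\tfrac14 B$, reduce the $q$-growth terms to $p$-growth via the $L^\infty$ bound and $tq\le sp$, $R\le1$, handle the tails by annular decomposition plus the telescoping identity and Lemma~\ref{lma:SobolevEmbeddingForDualPair}, and finish with Young's inequality in $\sigma$. The only cosmetic difference is that the paper collapses the double tail sum via Minkowski's inequality in $\ell^{p-1}$ rather than the subadditivity bound $(\sum a_k)^{1/(p-1)}\le \sum a_k^{1/(p-1)}$ you propose; both rely on $p-1\ge1$ and yield the same series $\sum_k 2^{-k(\frac{sp}{p-1}-s-\veps)}(\fint_{2^k\cB}U^\eta\,d\nu)^{1/\eta}$ (respectively with $tq$ in place of $sp$) after invoking \eqref{eq:GeometricSeriesEstimate}.
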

\begin{proof}
Let  $ \varphi \in C^{\infty}_c( B)$ such that $0 \leq \varphi \leq 1$, $\supp \varphi \subset \frac{1}{2} B$ and $|\grad \varphi| \leq \frac{C(n)}{R}$.
The function $(u-(u)_B) \varphi^q$ is also an admissible test function, and repeating the argument of Theorem \ref{thm:CaccioppoliEstimate} with this function instead of $\varphi^q u$ leads to the inequality \eqref{eq:CaccioppoliEstimate} with $u$ replaced with $u-(u)_B$.
Choosing the cutoff function to additionally satisfy $\varphi \equiv 1$ on $\frac{1}{4} B$, we can estimate
after dividing both sides of \eqref{eq:CaccioppoliEstimate} by $|B|$ as 
\[
\begin{split}
\frac{R^{\veps p}}{\veps} \int_{\frac{1}{4} \cB}G(x, y, U) \rmd \nu&=C  \frac{1}{|B|} \int_{\frac{1}{4} \cB} U^p \, \rmd \nu+ C  \frac{1}{|B|} \int_{\frac{1}{4} \cB} A(x,y) U^q\\
 &\leq C\fint_{B} \int_{B}  \frac{|\varphi^{q/p}(x) (u(x)-(u)_B) - \varphi^{q/p}(y) (u(y)-(u)_B)|^p}{|x-y|^{n+sp}}  \, \rmd y \, \rmd x \\
	& \quad+ C\fint_B \int_B {a(x,y) \frac{|\varphi(x) (u(x)-(u)_B) - \varphi(y) (u(y)-(u)_B)|^q}{|x-y|^{n+tq}} } \, \rmd y \, \rmd x. 
	\end{split}
	\]
	In then follows that 
	\begin{equation}\label{eq:RevHolderPiece0}
	\begin{split}
	\frac{R^{\veps p}}{\veps} \int_{\frac{1}{4} \cB}G(x, y, U) \rmd \nu&\leq  \rmI_1 + \rmI_2 + \rmI_3 + \rmI_4 + \rmI_5
	\end{split}
\end{equation}
	where 
	\[
	\begin{split}
	\rmI_1 =&\frac{C}{R^{sp}} \fint_B |u(x)-(u)_B|^p \, \rmd x \\
	\rmI_2=& C \fint_{B}{\varphi^q(x) |u(x)-(u)_B| } \, \rmd x \intdm{\bbR^n \setminus B}{\frac{|u(y)-(u)_B|^{p-1}}{|x_0-y|^{n+sp}}}{y}  \\
	\rmI_3  =& C \fint_{B} \int_{B}{a(x,y) \frac{|\varphi(x) - \varphi(y)|^q}{|x-y|^{n+tq}} |u(x)-(u)_B|^q } \, \rmd y \, \rmd x \\
	\rmI_4 = &C \fint_{B} \int_{\bbR^n \setminus B}{a(x,y) \varphi^q(x) \frac{|u(x)-(u)_B|^{q} + |u(y)-(u)_B|^{q-1}|u(x)-(u)_B|}{|x_0-y|^{n+tq}}} \, \rmd {y} \, \rmd {x} \\
	\rmI_5=&C R^{sp'} \left( \fint_B |f(x)|^{p_*} \, \rmd x \right)^{p'/p_*} \,.
\end{split}
\]
In what follows, we estimate $\rmI_{i}$ for $i \in \{1, 2, 3,4, 5\}$. 

\noindent\textit{Estimate of $\rmI_1$:} Using Lemma \ref{lma:SobolevEmbeddingForDualPair}
\begin{equation}\label{eq:RevHolderPiece4}
\rmI_1 \leq \frac{C R^{\veps p} }{\veps^{p/\eta}}  \left( \fint_{\cB} U^{\eta} \, \rmd \nu \right)^{p/\eta}\,.
\end{equation}
\textit{Estimate of $\rmI_2$:} we write $\rmI_2$ as a product $\rmI_2 = \mathrm{II}_1 \cdot \rm{II}_2$, split the second integral $\rm{II}_2$ into annuli, and obtain
\begin{equation}\label{eq:RevHolderTelescope1}
\begin{split}
\mathrm{II}_2 &\leq \sum_{j = 0}^{\infty} \intdm{2^{j+1}B \setminus 2^j B}{\frac{|u(y) -(u)_B|^{p-1}}{|x_0 - y|^{n+sp}}}{y} \\
	&\leq \sum_{j = 0}^{\infty}  (2^j R)^{-n-sp} \intdm{2^{j+1}B \setminus 2^j B}{|u(y) -(u)_B|^{p-1}}{y} \\
	&\leq C(n) \sum_{j = 0}^{\infty}  (2^j R)^{-sp} \left[ \left( \fint_{2^{j+1}B} |u(y) -(u)_B|^{p-1} \, \rmd y \right)^{1/(p-1)} \right]^{p-1} \,.
\end{split}
\end{equation}
We write each integrand in the last line of \eqref{eq:RevHolderTelescope1} as a telescoping sum and use the triangle inequality in $L^{p-1}$ (note that $p \geq 2$ here) to obtain
\begin{equation*}
\begin{split}
 \left( \fint_{2^{j+1}B} |u(y) -(u)_B|^{p-1} \, \rmd y \right)^{1/(p-1)} &=  \bigg( \fint_{2^{j+1}B} |u(y) - (u)_{2^{j+1}B} + (u)_{2^{j+1}B} \\
	&\quad - (u)_{2^{j}B} + (u)_{2^{j}B} - \ldots + (u)_{2B} - (u)_B|^{p-1} \, \rmd y \bigg)^{1/(p-1)}\\
	&\leq \left( \fint_{2^{j+1}B} |u - (u)_{2^{j+1}B}|^{p-1} \, \rmd y \right)^{1/(p-1)} + \sum_{k=0}^{j} |(u)_{2^{k+1}B} - (u)_{2^{k}B}|\,.
\end{split}
\end{equation*}
By H\"older's inequality, for each $k \in \{ 0,1, \ldots, j \}$
\begin{equation*}
|(u)_{2^{k+1}B} - (u)_{2^{k}B}| \leq \fint_{2^k B} | u(y) - (u)_{2^{k+1}B}| \, \rmd y \leq \left( \fint_{2^{k+1} B} | u(y) - (u)_{2^{k+1}B}|^{p} \, \rmd y \right)^{1/p}\,.
\end{equation*}
Therefore, additionally using H\"older's inequality on $ \left( \fint_{2^{j+1}B} |u - (u)_{2^{j+1}B}|^{p-1} \, \rmd y \right)^{1/(p-1)}$,
\begin{equation}\label{eq:RevHolderTelescope2}
\left( \fint_{2^{j+1}B} |u(y) -(u)_B|^{p-1} \, \rmd y \right)^{1/(p-1)}  \leq 2 \sum_{k=0}^{j+1} \left( \fint_{2^k B} | u(y) - (u)_{2^k B}|^{p} \, \rmd y \right)^{1/p}\,.
\end{equation}
Apply the Sobolev Embedding lemma (Lemma \ref{lma:SobolevEmbeddingForDualPair}) to each term to get
\begin{equation*}
\left( \fint_{2^k B} | u(y) - (u)_{2^k B}|^{p} \, \rmd y \right)^{1/p} \leq \frac{C(2^k R)^{s+\veps}}{\veps^{1/\eta}} \left( \fint_{2^k \cB} U^{\eta} \, \rmd \nu \right)^{1/\eta}\,.
\end{equation*}
Combining the last display with \eqref{eq:RevHolderTelescope2} gives
\begin{equation}\label{eq:RevHolder:Telescope:p-1}
\left( \fint_{2^{j+1}B} |u(y) -(u)_B|^{p-1} \, \rmd y \right)^{1/(p-1)}  \leq \frac{C R^{s+\veps}}{\veps^{1/\eta}} \sum_{k=0}^{j+1} 2^{k(s + \veps)} \left( \fint_{2^k \cB} U^{\eta} \, \rmd \nu \right)^{1/\eta}\,,
\end{equation}
and combining the previous line with \eqref{eq:RevHolderTelescope1} gives
\begin{equation*}
\mathrm{II}_2 \leq \frac{C R^{-s + \veps(p-1)}}{\veps^{(p-1)/\eta}} \sum_{j = 0}^{\infty}  2^{-jsp}  \left[ \sum_{k=0}^{j+1} 2^{k(s + \veps)} \left( \fint_{2^k \cB} U^{\eta} \, \rmd \nu \right)^{1/\eta} \right]^{p-1}\,.
\end{equation*}
Using Minkowski's inequality on the sums, 
\begin{equation*}
\begin{split}
& \left\{ \sum_{j = 0}^{\infty}   \left[ \sum_{k=0}^{j+1} 2^{\frac{-jsp}{p-1}} 2^{k(s + \veps)} \left( \fint_{2^k \cB} U^{\eta} \, \rmd \nu \right)^{1/\eta} \right]^{p-1} \right\}^{(p-1)/(p-1)} \\
	 &\leq \Bigg\{ \left( \fint_{\cB} U^{\eta} \, \rmd \nu \right)^{1/\eta} \left( \sum_{j=0}^{\infty} 2^{-jsp} \right)^{1/(p-1)} \\
	 &\qquad + \sum_{k = 1}^{\infty} \left( \sum_{j = k-1}^{\infty} 2^{-jsp + k(p-1)(s + \veps)} \left( \fint_{2^k \cB} U^{\eta} \, \rmd \nu \right)^{(p-1)/\eta} \right)^{1/(p-1)}  \Bigg\}^{p-1} \\
	 &\leq C \Bigg\{ \left( \fint_{\cB} U^{\eta} \, \rmd \nu \right)^{1/\eta} + \sum_{k = 1}^{\infty} \left( \fint_{2^k \cB} U^{\eta} \, \rmd \nu \right)^{1/\eta} \left( \sum_{j = k-1}^{\infty} 2^{-jsp + k(p-1)(s + \veps)}  \right)^{1/(p-1)}  \Bigg\}^{p-1}\,,
\end{split}
\end{equation*}
where $C = C(s,p)$. Using  \eqref{eq:GeometricSeriesEstimate}, we estimate the second term of the right hand side as 
\begin{equation*}
\begin{split}
&\sum_{k = 1}^{\infty} \left( \fint_{2^k \cB} U^{\eta} \, \rmd \nu \right)^{1/\eta} \left( \sum_{j = k-1}^{\infty} 2^{-jsp + k(p-1)(s + \veps)}  \right)^{1/(p-1)} \\
	&\leq C \sum_{k = 1}^{\infty} \left( \fint_{2^k \cB} U^{\eta} \, \rmd \nu \right)^{1/\eta} \left( 2^{-ksp + k(p-1)(s + \veps)}  \right)^{1/(p-1)} 
	= C \sum_{k = 1}^{\infty} 2^{-k(\frac{s}{p-1} - \veps)} \left( \fint_{2^k \cB} U^{\eta} \, \rmd \nu \right)^{1/\eta}\,,
\end{split}
\end{equation*}
where $C = C(s,p)$. 
Combining the previous three displays gives
\begin{equation*}
\mathrm{II}_2 \leq \frac{CR^{-s+\veps(p-1)}}{\veps^{(p-1)/\eta}} \left( \sum_{k = 0}^{\infty} 2^{-k(\frac{s}{p-1} - \veps)} \left( \fint_{2^k \cB} U^{\eta} \, \rmd \nu \right)^{1/\eta} \right)^{p-1}\,.
\end{equation*}
The first integral $\mathrm{II}_1$ can be estimated using H\"older's inequality and Lemma \ref{lma:SobolevEmbeddingForDualPair}:
\begin{equation*}
\mathrm{II}_1 \leq \left( \fint_{B} |u(x)-(u)_B|^p \, \rmd x \right)^{1/p} \leq \frac{C R^{s+\veps}}{\veps^{1/\eta}} \left( \fint_{\cB} U^{\eta} \, \rmd \nu \right)^{1/\eta}\,.
\end{equation*}
Combining the previous two displays and using that $\rmI_2 = \mathrm{II}_1 \cdot \mathrm{II}_2$,
\begin{equation*}
\begin{split}
\mathrm{I}_2 
		& \leq \frac{CR^{\veps p}}{\veps^{p/\eta}} \left( \fint_{\cB} U^{\eta} \, \rmd \nu \right)^{1/\eta} \left( \sum_{k = 0}^{\infty} 2^{-k(\frac{s}{p-1} - \veps)} \left( \fint_{2^k \cB} U^{\eta} \, \rmd \nu \right)^{1/\eta} \right)^{p-1}\,.
\end{split}
\end{equation*}
We conclude the estimate for $\mathrm{I}_2$ by applying Young's inequality for arbitrary $\sigma \in (0,1)$:
\begin{equation}\label{eq:RevHolderPiece5}
\mathrm{I}_2 \leq \frac{CR^{\veps p}}{\sigma^p \veps^{p/\eta}} \left( \fint_{\cB} U^{\eta} \, \rmd \nu \right)^{p/\eta} + \frac{C \sigma^p R^{\veps p}}{\veps^{p/\eta}} \left( \sum_{k = 0}^{\infty} 2^{-k(\frac{s}{p-1} - \veps)} \left( \fint_{2^k \cB} U^{\eta} \, \rmd \nu \right)^{1/\eta} \right)^p\,.
\end{equation}
\textit{Estimates of $\rmI_3$ and $\rmI_4$:}
With the estimates 
\begin{equation*}
\intdm{B(x_0,R)}{\frac{|\varphi(x)-\varphi(y)|^q}{|x-y|^{n+tq}}}{y} \leq  \intdm{B(x_0,R)}{\frac{\Vnorm{\grad \varphi}_{L^{\infty}(B)}}{|x-y|^{n+(t-1)q}}}{y} = \frac{C}{R^{tq}} \qquad \text{ for all } x \in B(x_0,R)
\end{equation*}
and
\begin{equation*}
\intdm{\bbR^n \setminus B(x_0,R)}{\frac{1}{|x_0-y|^{n+tq}}}{y} \leq \frac{C(n,t,q)}{R^{tq}}\,,
\end{equation*}
we get
\begin{equation*}
\begin{split}
\rmI_3 + \rmI_4 &\leq \frac{C M}{R^{tq}} \fint_B |u(x)-(u)_B|^q \, \rmd x + C M \fint_{B} {|u(x)-(u)_B| } \,\rmd x \intdm{\bbR^n \setminus B}{\frac{|u(y)-(u)_B|^{q-1}}{|x_0-y|^{n+tq}}}{y} \\
	&\leq C M R^{sp-tq} \Vnorm{u}_{L^{\infty}}^{q-p} \fint_B \frac{|u(x)-(u)_B|^p}{R^{sp}} \, \rmd x + C \Vnorm{u}_{L^{\infty}}^{q-p}  \fint_{B} {|u(x)-(u)_B| } \,\rmd x \intdm{\bbR^n \setminus B}{\frac{|u(y)-(u)_B|^{p-1}}{|x_0-y|^{n+tq}}}{y}  \\
	&\leq C \fint_B \frac{|u(x)-(u)_B|^p}{R^{sp}} \, \rmd x + C \fint_{B} {|u(x)-(u)_B| } \,\rmd x \intdm{\bbR^n \setminus B}{\frac{|u(y)-(u)_B|^{p-1}}{|x_0-y|^{n+tq}}}{y} := \wt{\rmI}_3 + \wt{\rmI}_4\,,
\end{split}
\end{equation*}
where $C=C(\texttt{data})$ in the last line. We additionally used that $sp \geq tq$ and $R \leq 1$.

We estimate $\wt{\rmI}_3$ indentically to $\rmI_1$ using \eqref{eq:RevHolderPiece4}:
\begin{equation}\label{eq:RevHolderPiece8}
\wt{\rmI}_3 \leq \frac{C R^{\veps p} }{\veps^{p/\eta}}  \left( \fint_{\cB} U^{\eta} \, \rmd \nu \right)^{p/\eta}\,.
\end{equation}
%
%
%
%
The estimate for $\wt{\rmI}_4$ is very similar to the estimate for $\rmI_2$. Write the product of the two integrals as $\wt{\rmI}_4 = \mathrm{IV}_1 \cdot \rm{IV}_2$, split the second integral $\rm{IV}_2$ into annuli, and obtain the analogue of \eqref{eq:RevHolderTelescope1}
\begin{equation*}
\mathrm{IV}_2 \leq C \sum_{j=0}^{\infty} (2^j R)^{-tq} \left[ \left( \fint_{2^{j+1} B} |u(y)-(u)_B|^{p-1} \, \rmd y \right)^{1/(p-1)} \right]^{p-1}\,.
\end{equation*}
Use the estimate \eqref{eq:RevHolder:Telescope:p-1} to get
\begin{equation*}
\mathrm{IV}_2 \leq \frac{C R^{-tq +s(p-1) +\veps (p-1)}}{\veps^{(p-1)/\eta}} \sum_{j=0}^{\infty} 2^{-jtq} \left[ \sum_{k=0}^{j+1} 2^{k(s + \veps)} \left( \fint_{2^k \cB} U^{\eta} \, \rmd \nu \right)^{1/\eta} \right]^{p-1}\,.
\end{equation*}
Apply Minkowski's inequality on the sums and use \eqref{eq:GeometricSeriesEstimate} in a way exactly similar to the estimate for $\mathrm{II}$:
\begin{equation*}
\begin{split}
& \sum_{j = 0}^{\infty}   \left[ \sum_{k=0}^{j+1} 2^{\frac{-jtq}{p-1}} 2^{k(s + \veps)} \left( \fint_{2^k \cB} U^{\eta} \, \rmd \nu \right)^{1/\eta} \right]^{p-1}  \\
	 &\leq C \Bigg\{ \left( \fint_{\cB} U^{\eta} \, \rmd \nu \right)^{1/\eta} + \sum_{k = 1}^{\infty} \left( \fint_{2^k \cB} U^{\eta} \, \rmd \nu \right)^{1/\eta} \left( \sum_{j = k-1}^{\infty} 2^{-jtq + k(p-1)(s + \veps)}  \right)^{1/(p-1)}  \Bigg\}^{p-1} \\
	 &\leq C \left\{ \sum_{k = 0}^{\infty} 2^{-k(\frac{tq}{p-1} - s - \veps)} \left( \fint_{2^k \cB} U^{\eta} \, \rmd \nu \right)^{1/\eta} \right\}^{p-1}\,.
\end{split}
\end{equation*}
Combining the previous two displays gives
\begin{equation*}
\mathrm{IV}_2 \leq \frac{CR^{-tq +s(p-1)+\veps(p-1)}}{\veps^{(p-1)/\eta}} \left( \sum_{k = 0}^{\infty}  2^{-k(\frac{tq}{p-1} -s -\veps )} \left( \fint_{2^k \cB} U^{\eta} \, \rmd \nu \right)^{1/\eta} \right)^{p-1}\,.
\end{equation*}
Now, since $sp \geq tq$ and since $R \leq 1$, it follows that $R^{-tq + s(p-1) + \veps(p-1)} \leq R^{-s + \veps(p-1)}$. 
Then by using H\"older's inequality and Lemma \ref{lma:SobolevEmbeddingForDualPair} to estimate $\mathrm{IV}_1$,
\begin{equation*}
\begin{split}
\wt{\rmI}_4 &\leq \frac{CR^{-tq +s(p-1)+\veps(p-1)}}{\veps^{(p-1)/\eta}} \left( \fint_B |u(x)-(u)_B| \, \rmd x \right) \left( \sum_{k = 0}^{\infty}  2^{-k(\frac{tq}{p-1} -s -\veps )} \left( \fint_{2^k \cB} U^{\eta} \, \rmd \nu \right)^{1/\eta} \right)^{p-1} \\
	& \leq \frac{CR^{\veps(p-1)}}{\veps^{(p-1)/\eta}} \left( \fint_B \left| \frac{u(x)-(u)_B}{R^s} \right|^p \, \rmd x \right)^{1/p} \left( \sum_{k = 0}^{\infty}  2^{-k(\frac{tq}{p-1} -s -\veps )} \left( \fint_{2^k \cB} U^{\eta} \, \rmd \nu \right)^{1/\eta} \right)^{p-1} \\
	& \leq \frac{CR^{\veps p}}{\veps^{p/\eta}} \left( \fint_{\cB} U^{\eta} \rmd x \right)^{1/\eta} \left( \sum_{k = 0}^{\infty}  2^{-k(\frac{tq}{p-1} -s -\veps )} \left( \fint_{2^k \cB} U^{\eta} \, \rmd \nu \right)^{1/\eta} \right)^{p-1}\,.
\end{split}
\end{equation*}
We conclude the estimate for $\wt{\rmI}_4$ by applying  Young's inequality for arbitrary $\sigma \in (0,1)$:
\begin{equation}\label{eq:RevHolderPiece7}
\wt{\rmI}_4 \leq \frac{CR^{\veps p}}{\sigma^p \veps^{p/\eta}} \left( \fint_{\cB} U^{\eta} \, \rmd \nu \right)^{p/\eta} + \frac{C \sigma^p R^{\veps p}}{\veps^{p/\eta}} \left( \sum_{k = 0}^{\infty} 2^{-k(\frac{tq}{p-1} - s - \veps)} \left( \fint_{2^k \cB} U^{\eta} \, \rmd \nu \right)^{1/\eta} \right)^p\,.
\end{equation}
\textit{Estimate of $\rmI_5$:} We use the definition of $\nu$ to get
\begin{equation*}
\begin{split}
\fint_B |f(x)|^{p_*} \, \rmd x &= \fint_B \fint_B |f(x)|^{p_*} \, \rmd y \, \rmd x \\
	&= \frac{1}{R^{n+\veps p}} \frac{1}{R^{n-\veps p}}\int_B \int_B |f(x)|^{p_*} \, \rmd y \, \rmd x  \\
	&\leq \frac{C(n)}{R^{n + \veps p}}\int_B \int_B \frac{|f(x)|^{p_*}}{|x-y|^{n-\veps p}} \, \rmd y \, \rmd x 
	\leq \frac{C}{\veps} \fint_{\cB} F^{p_*} \, \rmd \nu\,.
\end{split}
\end{equation*}
Therefore,
\begin{equation}\label{eq:RevHolderPiece9}
\mathrm{I}_5 \leq \frac{CR^{sp'}}{\veps^{p'/p_*}} \left( \fint_{\cB} F^{p_*} \, \rmd \nu \right)^{p' / p_*}\,.
\end{equation}
Combining  \eqref{eq:RevHolderPiece4}, \eqref{eq:RevHolderPiece5},  \eqref{eq:RevHolderPiece8}, \eqref{eq:RevHolderPiece7}, and  \eqref{eq:RevHolderPiece9} gives \eqref{eq:RevHolder} after some algebraic manipulations.
\end{proof}
\begin{remark}\label{remark-finiteness-sums} 
We make some remarks. The upper bound in \eqref{eq:RevHolder} can be simplified down to just one series. Since $sp \geq tq$
\begin{equation}\label{eq:SeriesComparisons}
\begin{split}
2^{-k(\frac{sp}{p-1} -s -\veps)} \leq 2^{-k(\frac{tq}{p-1} - s - \veps)}\,, \qquad k \in \bbZ_+\,,
\end{split}
\end{equation} 
so we can replace the infinite series on the right-hand side of \eqref{eq:RevHolder} with
\begin{equation*}
\frac{C \sigma}{\veps^{1/\eta-1/p}} \sum_{k = 0}^{\infty} \alpha_k \Vparen{\fint_{2^k \cB} U^{\eta} \, \rmd \nu }^{1/\eta}\,,
\end{equation*}
where
\begin{equation}\label{eq:DefnOfGeometricSeries}
\alpha_k := 2^{-k(\frac{tq}{p-1} - s - \veps)}\,.
\end{equation}
Moreover, again following up Remark \ref{remarka=0} in the case $a \equiv 0$ one simply takes $\alpha_k = 2^{-k(\frac{s}{p-1}-\veps)}$.
In any case, since $ \textstyle \veps \leq \min \{ s( \frac{tq}{sp} - \frac{1}{p'}) , \frac{s}{p} \} $ the series $\sum_{k=0}^{\infty}\alpha_k <\infty$ and as a consequence  
\[
\begin{split}
\sum_{k = 0}^{\infty} \alpha_k \Vparen{\fint_{2^k \cB} U^{\eta} \, \rmd \nu }^{1/\eta} &\leq \sum_{k = 0}^{\infty} \alpha_k \Vparen{\fint_{2^k \cB} U^{p} \, \rmd \nu }^{1/p}\\
&=C(\veps,p,s)\sum_{k = 0}^{\infty} \alpha_k  \Vparen{\int_{2^k B}\int_{2^{k}B} {|u(y)-u(x)|^{p}\over |x-y|^{n+sp}} \, \rmd x \rmd y }^{1/p}\\
& \leq  R^{-n/p-\epsilon}C(\veps,p,s)\Vparen{\int_{\mathbb{R}^{n}}\int_{\mathbb{R}^{n}} {|u(y)-u(x)|^{p}\over |x-y|^{n+sp}} \, \rmd x \rmd y }^{1/p} < \infty. 
\end{split}
\]
	
\end{remark}

The following corollary establishes a genuine scale-invariant reverse H\"older inequality for an appropriately scaled version of the integrand $G$. This quantity will satisfy a self-improving result. 
\begin{corollary}\label{cor:RevHolderFinal}
Let $\textstyle \veps \in \left( 0, \min \{ s( \frac{tq}{sp} - \frac{1}{p'}) , \frac{s}{p} \} \right)$. (This choice is possible by Assumption \ref{Assumption:Intro:Exp}). Let $B = B_R(x_0)$ be a ball with $R \leq 1$. Define $H(x,y,U) := G(x,y,U)^{(p-1)/p}$. Then there exists a constant $C$ depending only on $\texttt{data}$ such that for any solution $u \in W^{s,p}(\bbR^n) \cap L^{\infty}(\bbR^n)$ to \eqref{eq:Intro:MainEqn} and for any $\sigma \in (0,1)$
\begin{equation}\label{eq:RevHolderFinal}
\begin{split}
\left( \fint_{\frac{1}{4} \cB} H(x,y,U)^{p'} \, \rmd \nu \right)^{1/p'} &\leq \frac{C}{\sigma \veps^{1/\gamma - 1/p'}} \left( \fint_{\cB} H(x,y,U)^{\gamma} \, \rmd \nu \right)^{1/\gamma} \\
	&\qquad + \frac{C \sigma}{ \veps^{1/\gamma - 1/p'}} \sum_{k=0}^{\infty} \alpha_k \left( \fint_{2^k \cB} H(x,y,U)^{\gamma} \, \rmd \nu \right)^{1/\gamma} \\
	&+ \frac{C [\nu(\cB)]^{\theta}}{\veps^{1/p_* - 1/p'}} \left( \fint_{\cB} F^{p_*} \, \rmd \nu \right)^{1/p_*}\,, \\
\end{split}
\end{equation}
where $\gamma := \frac{\eta}{p-1} =  p' \cdot  \frac{n+\veps p}{n+sp+\veps p} < p'$ and
$
\theta := \frac{s-\veps(p-1)}{n+\veps p}
$.
\end{corollary}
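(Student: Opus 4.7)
The plan is to deduce \eqref{eq:RevHolderFinal} from Proposition \ref{prop:RevHolder} by raising that inequality to the power $p-1$ and using the algebraic relationships between $G$, $H$, $U$, and the exponents $p$, $\eta$, $p'$, $\gamma$. The key observation is that since $H = G^{(p-1)/p}$ we have the identity $H^{p'} = G$ and the relation $H^{\gamma} = G^{\eta/p}$; combined with the pointwise lower bound $G \geq U^p$ coming from \eqref{eq:UDefn:Energy}, this yields the pointwise domination $U^{\eta} \leq H^{\gamma}$ and the exponent identity $(p-1)/\eta = 1/\gamma$. Consequently,
\begin{equation*}
\left( \fint_{\frac{1}{4}\cB} H^{p'} \, \rmd \nu \right)^{1/p'} = \left[ \left( \fint_{\frac{1}{4}\cB} G \, \rmd \nu \right)^{1/p} \right]^{p-1}\,,
\end{equation*}
so raising the inequality in Proposition \ref{prop:RevHolder} to the power $p-1$ produces the left-hand side of \eqref{eq:RevHolderFinal}.

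The next step is to control the right-hand side after exponentiation. Since $p-1 \geq 1$, I apply the elementary inequality $(a+b+c)^{p-1} \leq C(p)(a^{p-1}+b^{p-1}+c^{p-1})$ to separate the three contributions from \eqref{eq:RevHolder}. For the diagonal term this directly gives $(\fint_{\cB} U^{\eta}\, \rmd\nu)^{(p-1)/\eta} = (\fint_{\cB} U^{\eta}\, \rmd\nu)^{1/\gamma} \leq (\fint_{\cB} H^{\gamma}\, \rmd\nu)^{1/\gamma}$, which is the desired form. For the tail series the subtlety is that we must preserve the linear-in-$k$ structure of the sum rather than collapsing it. I handle this by Jensen's inequality applied to the probability measure $\alpha_k/\sum_j \alpha_j$ on $\bbZ_+$:
\begin{equation*}
\left( \sum_{k=0}^{\infty} \alpha_k a_k \right)^{p-1} \leq \left( \sum_{k=0}^{\infty} \alpha_k \right)^{p-2} \sum_{k=0}^{\infty} \alpha_k a_k^{p-1}\,,
\end{equation*}
which is legitimate because $\sum_k \alpha_k < \infty$ (see Remark \ref{remark-finiteness-sums}, using the assumption $\veps \leq s(tq/(sp)-1/p')$). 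Applied with $a_k = (\fint_{2^k\cB} U^{\eta}\,\rmd\nu)^{1/\eta}$ and followed again by $U^{\eta} \leq H^{\gamma}$, this turns the tail term into $C \sum_k \alpha_k (\fint_{2^k\cB} H^{\gamma}\,\rmd\nu)^{1/\gamma}$, as required.

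It remains to verify that the explicit prefactors match those claimed in \eqref{eq:RevHolderFinal}. For the $\veps$-weights, a direct check gives
\begin{equation*}
\veps^{(p-1)(1/\eta - 1/p)} = \veps^{1/\gamma - 1/p'}\,,
\end{equation*}
which accounts for the powers of $\veps$ in both the diagonal and tail terms. For the data contribution, raising the term in \eqref{eq:RevHolder} to the power $p-1$ produces the prefactor $[\veps\,\nu(\cB)]^{\theta}/\veps^{1/p_*-1/p'}$ and the exponent $1/p_*$ on the integral of $F^{p_*}$; since $\veps \in (0,1)$ and $\theta > 0$, the factor $\veps^{\theta}$ is absorbed into the final constant $C$, leaving exactly $[\nu(\cB)]^{\theta}/\veps^{1/p_* - 1/p'}$. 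Finally, the parameter $\sigma^{p-1}$ that appears after exponentiation is relabeled as $\sigma$, which is a valid reparametrization over $(0,1)$ when $p \geq 2$.

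The only step requiring genuine care is the Jensen bound on the tail series: a crude estimate that collapses the series into a single diagonal term would still yield an inequality of reverse-H\"older type, but would destroy the scale-by-scale structure that is essential for the fractional Gehring argument in Section \ref{sec:GehringLemma}. Preserving this structure is the main technical point, and it is precisely what the finiteness of $\sum_k \alpha_k$ buys us.
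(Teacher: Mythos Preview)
Your proof is correct and follows essentially the same route as the paper: apply the pointwise bound $U^{\eta}\leq H^{\gamma}$ to \eqref{eq:RevHolder}, raise to the power $p-1$, split the sum via $(a+b+c)^{p-1}\leq C(a^{p-1}+b^{p-1}+c^{p-1})$, and then use the discrete inequality $\big(\sum_k\alpha_k a_k\big)^{p-1}\leq\big(\sum_k\alpha_k\big)^{p-2}\sum_k\alpha_k a_k^{p-1}$ (which the paper phrases as H\"older and you as Jensen---they are the same here) to preserve the tail structure. Your bookkeeping of the $\veps$, $\sigma$, and $\nu(\cB)$ prefactors is also accurate.
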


\begin{proof}
Apply \eqref{eq:SeriesComparisons}-\eqref{eq:DefnOfGeometricSeries} and the pointwise inequality $U^{\eta} \leq (U^p + A U^q)^{\eta/p} = H^{\eta/(p-1)} = H^{\gamma}$ to each integral on the right-hand side of \eqref{eq:RevHolder}.
The result then follows by raising both sides of  \eqref{eq:RevHolder} to the power $p-1$, using the estimate $(a+b+c)^{p-1} \leq 3^{p-2}(a^{p-1}+b^{p-1}+c^{p-1})$ on the right-hand side, and finally using H\"older's inequality on the infinite sum. 
That is, if we set $b_k =  \left( \fint_{2^k \cB} H^{\gamma} \, \rmd \nu \right)^{1/\eta}$,
\begin{equation*}
\left( \sum_{k=0}^{\infty} \alpha_k b_k \right)^{p-1} \leq \left( \sum_{k=0}^{\infty} \alpha_k b_k^{p-1} \right) \left( \sum_{k=0}^{\infty} \alpha_k \right)^{p-2}\,.
\end{equation*}
\end{proof}

\begin{remark}
	If $a \equiv 0$ one can see from careful inspection of the proofs they need not assume $u \in L^{\infty}(\bbR^n)$ in Proposition \ref{prop:RevHolder} and Corollary \ref{cor:RevHolderFinal}.
\end{remark}

\section{Fractional Gehring Lemma}\label{sec:GehringLemma}
We restate the inequalities that $\veps > 0$ need to satisfy 
\begin{equation}\label{Assumption:Epsilon}
\veps \in \left( 0, \frac{s}{p} \right)\,, \qquad \veps < s \left( \frac{tq}{sp} - \frac{1}{p'} \right)\,, \qquad \veps < 1 - s \,,
\end{equation}
so that the results of previous sections hold. We also recall $H(x,y,U) := G(x,y,U)^{(p-1)/p}$ as given in Corollary \ref{cor:RevHolderFinal}.

\begin{theorem}\label{thm:Gehring:MainThm}
Let $u \in W^{s,p}(\bbR^n) \cap L^{\infty}(\bbR^n)$ be a bounded weak solution to \eqref{eq:Intro:MainEqn} with  Assumptions \eqref{Assumption:Intro:Coeff} and \eqref{Assumption:Intro:Exp}. Define $F$ as in \eqref{eq:DefnOfUandF} with $f \in L^{p_* + \delta_0}(\bbR^n)$ for given $\delta_0 > 0$. 
Then there exists a constant $\veps \in (0,1-s)$ depending on $\texttt{data}$ and $\delta_0$, and constants $\delta \in (0,1)$ and $C_1$ depending on $\texttt{data}$ and $\veps$ such that whenever $\cB \equiv \cB(x_0,\varrho_0) \subset \bbR^{2n}$ with $\varrho_0 \leq 1$ we have
\begin{equation}\label{eq:Gehring:Mainthm:SelfImprovementInequality}
\begin{split}
\left( \fint_{\cB} H(x,y,U)^{p'+\delta} \, \rmd \nu \right)^{1/(p'+\delta)} &\leq C_1 \sum_{k=0}^{\infty} 2^{-k(\frac{tq}{p-1} - s - \veps)} \left( \fint_{2^k \cB} H(x,y,U)^{p'} \, \rmd \nu \right)^{1/p'} \\
	&+ C_1 \left( \fint_{\cB} F^{p_*+\delta_0} \, \rmd \nu \right)^{1/(p_*+\delta_0)}\,.
\end{split}
\end{equation}
\end{theorem}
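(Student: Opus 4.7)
The plan is to adapt the level-set / Calderón-Zygmund approach of Kuusi-Mingione-Sire to our nonlocal double-phase setting, using the reverse Hölder inequality of Corollary \ref{cor:RevHolderFinal} as the sole analytic input. The self-improvement ultimately comes from the gap $\gamma < p'$ in that inequality.

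First I would fix $\cB_0 = \cB(x_0,\varrho_0)$ and choose an initial threshold $\lambda_0$ large enough that the natural averaged $H^{p'}$-energy on $\cB_0$ together with the $\alpha_k$-weighted nonlocal tail is controlled by $\lambda_0^{p'}$. For each level $\lambda > \lambda_0$, I would introduce two families of "bad" diagonal balls $\cB(z,r) \subset \cB_0$: an $H$-bad family on which the averaged $H^{p'}$ (plus its tail sum) exceeds $\lambda^{p'}$, and an $F$-bad family on which a suitable average of $F^{p_*+\delta_0}$ exceeds a fixed power of $\lambda$. A Vitali-type covering argument, performed exclusively within diagonal balls and using \eqref{eq:MeasureOfBallAndCube} to recover the usual comparability between measures of balls and cubes, extracts a countable disjoint subfamily $\{\cB_i\}$ on which a stopping-time maximality holds at the selected scale, while a controlled dilate of each $\cB_i$ violates it.

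Applying Corollary \ref{cor:RevHolderFinal} on each $\cB_i$ at its stopping scale and summing, one produces a level-set distribution estimate of the form
\begin{equation*}
\nu(\{H > \lambda\} \cap \cB_0) \leq \frac{C}{\lambda^{p'-\eta_0}} \intdm{\{H > c\lambda\} \cap \widetilde{\cB}_0}{H^{\eta_0}}{\nu} + \frac{C}{\lambda^{(p_*+\delta_0)/p'}} \intdm{\{F > c \lambda\} \cap \widetilde{\cB}_0}{F^{p_*+\delta_0}}{\nu}\,,
\end{equation*}
where $\eta_0 < p'$ is some exponent obtained from $\gamma$ and $\widetilde{\cB}_0$ is an enlargement of $\cB_0$ whose contribution is organized as a weighted sum over the dilates $2^k \cB_0$. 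Multiplying by $\lambda^{\delta-1}$ for a sufficiently small $\delta \in (0, p' - \eta_0)$ and integrating in $\lambda$ from $\lambda_0$ to $\infty$, invoking Fubini and the choice of $\lambda_0$, then yields \eqref{eq:Gehring:Mainthm:SelfImprovementInequality}; the infinite series on the right-hand side arises from the enlargement $\widetilde{\cB}_0$ together with the weights $\alpha_k = 2^{-k(tq/(p-1) - s - \veps)}$ inherited from the reverse Hölder tail.

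The main obstacle, and the reason the argument is not completely classical, is that the reverse Hölder inequality of Corollary \ref{cor:RevHolderFinal} holds only on diagonal balls $B \times B \subset \bbR^{2n}$: one cannot cover a generic level set $\{H > \lambda\}$ by arbitrary $\bbR^{2n}$-balls and retain control. The Kuusi-Mingione-Sire localization bypasses this by performing the entire stopping-time construction within the class of diagonal balls, using \eqref{eq:MeasureOfBallAndCube} and the doubling property \eqref{eq:MeasureDoublingProperty} of $\nu$ to compensate for the missing maximal-function tools. In our setting, the double-phase $q$-term enters only through the weights $\alpha_k$, which are absolutely summable precisely by the assumption \eqref{Assumption:Intro:Exp}; this summability is what allows the nonlocal tail to be redistributed via a discrete Minkowski/Hölder inequality and absorbed into the right-hand side of \eqref{eq:Gehring:Mainthm:SelfImprovementInequality}. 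Since the technical level-set estimate itself is structurally identical to the one in \cite{kuusi2015}, I would follow that presentation line-by-line, with the modifications needed to accommodate the power $(p-1)/p$ in the definition of $H$ and the $q$-weighted term in $G$, and defer the most technical computations to the companion note \cite{Scott-Mengesha-Levelset}.
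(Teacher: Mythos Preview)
Your outline matches the paper's strategy: use Corollary \ref{cor:RevHolderFinal} to derive a level-set estimate (deferred to \cite{Scott-Mengesha-Levelset}), integrate against $\lambda^{\delta-1}$, and close by Fubini with $\delta$ small. Two points need tightening. First, the level-set inequality must carry $\int_{\{H>\lambda\}\cap\cB_\beta} H^{p'}\,\rmd\nu$ on the left (equivalently the $\mu$-measure with $\rmd\mu=H^{p'}\rmd\nu$), not $\nu(\{H>\lambda\})$; with your stated form, multiplying by $\lambda^{\delta-1}$ and applying Fubini does not reproduce $\int H^{p'+\delta}\,\rmd\nu$ on both sides, so the absorption fails. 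Second, the paper does not feed the nonlocal tail into an enlargement $\widetilde{\cB}_0$ inside the level-set estimate; it works on concentric diagonal balls $\cB_\beta\subset\cB_\alpha\subset 2\cB_0$, packs the entire tail into the threshold $\lambda_0$ via \eqref{eq:DefinitionOfLambda}--\eqref{eq:DefinitionOfTails2}, and after truncating $H_m=\min\{H,m\}$ arrives at $\varphi(\beta)\le\tfrac12\varphi(\alpha)+C(\varrho_0/(\alpha-\beta))^{2n+p}\Theta(x_0,2\varrho_0)$ for $\varphi(\varrho)=\bigl(\fint_{\cB_\varrho}H_m^{\delta}H^{p'}\,\rmd\nu\bigr)^{1/(p'+\delta)}$, then invokes the standard iteration lemma \cite[Lemma~6.1]{alma9916407080102311} and lets $m\to\infty$ via Fatou. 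Without the truncation you cannot justify finiteness a priori, and without the $\beta<\alpha$ iteration you cannot absorb the $\tfrac14\int_{\cB_\alpha}$ term, since it lives on a strictly larger ball.
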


\begin{proof}
Note that all quantities on the right-hand side of \eqref{eq:Gehring:Mainthm:SelfImprovementInequality} are finite following the argument in Remark \ref{remark-finiteness-sums}. 
Define the truncated function $H_m = \min \{ H,m \}$ for positive integers $m$, and define the measure $\rmd \mu = H^{p'} \rmd \nu$. Choose $\alpha$ and $\beta$ such that $\varrho_0 < \beta < \alpha < 2 \varrho_0 $, so that
\begin{equation*}
\cB(x_0,\varrho_0) \subset \cB(x_0,\beta) \subset \cB(x_0,\alpha) \subset \cB(x_0 ,2 \varrho_0)\,.
\end{equation*}
By using the distributional form of the integral, 
\begin{equation}\label{eq:MainProof:BeginningEstimate}
\begin{split}
\int_{\cB_{\beta}} H_m^{\delta} H^{p'} \, \rmd \nu &= \int_{\cB_{\beta}} H_m^{\delta} \, \rmd \mu \\
	&= \delta \intdmt{0}{\infty}{\lambda^{\delta -1 } \mu \left( \cB_{\beta} \cap \{ H_m > \lambda \} \right) }{\lambda} \\
	&= \delta \intdmt{0}{m}{\lambda^{\delta -1 } \intdm{\cB_{\beta} \cap \{ H > \lambda \}}{H^{p'}}{\nu}}{\lambda} \\
	&\leq \lambda_0^{\delta} \int_{\cB_{\beta}} H^{p'} \, \rmd \nu + \delta \intdmt{\lambda_0}{m}{\lambda^{\delta -1 } \intdm{\cB_{\beta} \cap \{ H > \lambda \}}{H^{p'}}{\nu}}{\lambda} \\
	&:= \rmI + \mathrm{II}\,,
\end{split}
\end{equation}
where $\lambda_0 > 0$ is a constant. We define it here as
\begin{equation}\label{eq:DefinitionOfLambda}
\begin{split}
\lambda_0 := \frac{C_a}{\veps} \left( \frac{\varrho_0}{\alpha-\beta} \right)^{2n+p} \big\{ \Upsilon_0(x_0, 2 \varrho_0) + Tail(x_0, 2 \varrho_0) + \Psi_1(x_0, 2 \varrho_0)  \big\}\,,
\end{split}
\end{equation}
where the constant $C_a$ depends only on $\texttt{data}$,
\begin{equation}\label{eq:DefinitionOfTails1}
\begin{split}
\Upsilon_0(x_0,R) &:= \left( \fint_{\cB(x_0,R)} F^{p_*+\delta_f} \, \rmd \nu \right)^{1/(p_* + \delta_f)}\,, \text{ with } \delta_f \in (0,\delta_0) \text{ to be determined,} \\
Tail(x_0,R) &:= \sum_{k=0}^{\infty} 2^{-k(\frac{tq}{p-1}-s-\veps)} \left( \fint_{\cB(x_0,2^k R)} H^{\gamma} \, \rmd \nu \right)^{1/\gamma}\,,
\end{split}
\end{equation}
and, for any constant $M \geq 1$,
\begin{equation}\label{eq:DefinitionOfTails2}
\Psi_M(x_0,R) := \left( \fint_{\cB(x_0,R)} H^{p'} \, \rmd \nu \right)^{1/p'} + M \frac{[\nu(\cB(x_0,R))]^{\theta}}{\veps^{1/p_* - 1/p'}} \left( \fint_{\cB(x_0,R)} F^{p_*} \, \rmd \nu \right)^{1/p_*}\,;
\end{equation}
we write $\Psi_M$ with $M=1$ as $\Psi_1$. The definition of this constant $\lambda_0$ is motivated by the right-hand side of \eqref{eq:RevHolderFinal}.
With this choice of $\lambda_0$, the first term in \eqref{eq:MainProof:BeginningEstimate} is easily estimated as
\begin{equation}\label{eq:MainProof:EstimateOfI}
\rmI \leq \lambda_0^{\delta} \, \nu(\cB_{2 \varrho_0}) \fint_{\cB_{2 \varrho_0}} H^{p'} \, \rmd \nu \leq C \nu(\cB_{\varrho_0}) \lambda_0^{p'+\delta}\,,
\end{equation}
by the definition of $\lambda_0$ and by the doubling property \eqref{eq:MeasureDoublingProperty}, with $C = C(\texttt{data}, \veps)$.

The constant $\lambda_0$ is chosen to additionally estimate the $\mu$-measure of the level set $\{ H > \lambda \}$ that appears in $\mathrm{II}$. It turns out that for every $\lambda \geq \lambda_0$
\begin{equation}\label{eq:MainProof:LevelSetEstimate}
\frac{1}{\lambda^{p'}} \mu \big( \cB(x_0,\beta) \cap \{ H > \lambda \} \big) \leq \frac{C_{\alpha}}{\veps^{\vartheta} \lambda^{\gamma}} \intdm{\cB(x_0,\alpha) \cap \{ H > \lambda \} }{H^{\gamma}}{\nu} + \frac{C_f \lambda_0^{\vartheta_f}}{\lambda^{\widetilde{\vartheta}_f}} \intdm{\cB(x_0,\alpha) \cap \{F > \kappa_f \lambda \} }{F^{p_*}}{\nu}\,,
\end{equation}
%
where constants $C_{\alpha}(\texttt{data}) > 0$, $C_f(\texttt{data},\veps) \geq 1$, $\kappa_f(\texttt{data},\veps) \in (0,1)$, and positive constants
\begin{equation*}
\vartheta := 
\frac{3(p'-\gamma)}{\gamma}\,,
	\qquad
\vartheta_f := 
(p_* + \delta_f) \left( \frac{p_* \theta}{1-p_* \theta} \right)\,,
	\qquad 
\widetilde{\vartheta}_f := 
\frac{p_*(1+\theta \delta_f)}{1-p_* \theta}\,.
\end{equation*}
The reverse H\"older inequality \eqref{eq:RevHolderFinal} is used to prove this level set estimate. The proof is quite technical. For $p=2$ this same level set estimate is proved in \cite[Section 5]{kuusi2015} with appropriately defined constants and the proof for $p>2$ can be carried out in almost exactly the same manner adjusting the constants to fit into the new setup. Presenting its proof in this work will force us to repeat arguments from \cite[Section 5]{kuusi2015}. Instead we have chosen to present the proof in the companion note \cite{Scott-Mengesha-Levelset} for the sake of completeness.  For now, we use \eqref{eq:MainProof:LevelSetEstimate} to estimate $\mathrm{II}$, and obtain
\begin{equation}\label{eq:MainProof:FirstEstimateOfII}
\begin{split}
\mathrm{II} &\leq \delta \intdmt{\lambda_0}{m}{\lambda^{\delta -1 } \left( \frac{C_{\alpha}}{\veps^{\vartheta} \lambda^{\gamma- p'}} \intdm{\cB(x_0,\alpha) \cap \{ H > \lambda \} }{H^{\gamma}}{\nu} + \frac{C_f \lambda_0^{\vartheta_f}}{\lambda^{\widetilde{\vartheta}_f - p'}} \intdm{\cB(x_0,\alpha) \cap \{F > \kappa_f \lambda \} }{F^{p_*}}{\nu} \right) }{\lambda} \\
	&\leq \frac{C_{\alpha} \delta}{\veps^{\vartheta}} \intdmt{0}{\infty}{ \lambda^{p' - \gamma - 1 + \delta} \intdm{\cB_{\alpha} \cap \{ H_m > \lambda \}}{H^{\gamma}}{\nu}}{\lambda} + C_f \delta \int_{\lambda_0}^m \frac{\lambda_0^{\vartheta_f}}{\lambda^{\widetilde{\vartheta}_f-\delta-p'+1}} \intdm{\cB_{\alpha} \cap \{F > \kappa_f \lambda \} }{F^{p_*}}{\nu} \, \rmd \lambda\\
	&:= \mathrm{II}_1 + \mathrm{II}_2\,.
\end{split}
\end{equation}
To estimate $\mathrm{II}_1$, we choose $\delta > 0$ to satisfy
\begin{equation}\label{eq:DeltaCondition1}
\frac{C_{\alpha}(n+sp+p)}{spp'} \cdot \frac{\delta}{\veps^{3sp/n}} < 1/4\,,
\end{equation}
so that
\begin{equation*}
\frac{C_{\alpha}}{(p'-\gamma) \veps^{\vartheta}} \delta \leq \frac{C_{\alpha}(n+sp+\epsilon p)}{spp'} \cdot \frac{\delta}{\veps^{3sp/n}}  < 1/4\,,
\end{equation*}
and so using Fubini's Theorem
\begin{equation}\label{eq:MainProof:EstimateOfII1}
\mathrm{II}_1 = \frac{C_{\alpha} \delta}{(p'-\gamma+\delta) \veps^{\vartheta}} \intdm{\cB_{\alpha}}{H_m^{p'-\gamma+\delta} H^{\gamma}}{\nu} \leq \frac{1}{4} \intdm{\cB_{\alpha}}{H_m^{p'-\gamma+\delta} H^{\gamma}}{\nu} \leq \frac{1}{4} \intdm{\cB_{\alpha}}{H_m^{\delta} H^{p'}}{\nu} \,.
\end{equation}

To estimate $\mathrm{II}_2$
we need an additional condition on $\veps$, and in turn an additional condition on $\delta$. Recall that $\veps > 0$ can be as small as we wish, but it has not been fixed yet. We do this now. Note that for $\veps > 0$
\begin{equation*}
\frac{\veps p (p')^2}{n+\veps p} < \frac{\veps p (n + s p')}{n(s-\veps(p-1))}\,.
\end{equation*}
Therefore, we can find $\veps > 0$ satisfying \eqref{Assumption:Epsilon} as well as a number $\delta_f \in (0,\delta_0)$ sufficiently small such that
\begin{equation}\label{eq:DeltaFCondition1}
\frac{\veps p (p')^2}{n+\veps p} < \delta_f \leq \frac{\veps p (n + s p')}{n(s-\veps(p-1))}\,.
\end{equation}
Now that the positive constants $\veps$ and $\delta_f$ have been fixed, we come to our second condition on $\delta$, namely the upper bound
\begin{equation}\label{eq:DeltaCondition2}
\delta \leq  \frac{1}{p-1} \cdot \frac{\veps p p'  (n+s p')}{n^2 + 2n \veps p + \veps s p p'} \,.
\end{equation}
 a consequence of this assumption is the bound
\begin{equation}\label{eq:MainProof:ConsequenceOfDeltaCond}
\delta \leq \delta_f \left( \frac{(n+\veps p) (n+s p')}{n^2 + 2n \veps p + \veps s p p'} \right) - \frac{\veps p p' (n+s p')}{n^2 + 2n \veps p + \veps s p p'}\,.
\end{equation}
Indeed, using the lower bound in \eqref{eq:DeltaFCondition1},
\begin{equation*}
\begin{split}
\delta \leq \frac{1}{p-1} \cdot \frac{\veps p p'  (n+s p')}{n^2 + 2n \veps p + \veps s p p'} &= \frac{\veps p (p')^2}{n+\veps p} \cdot \frac{(n+\veps p) (n+s p')}{n^2 + 2n \veps p + \veps s p p'} - \frac{\veps p p' (n+s p')}{n^2 + 2n \veps p + \veps s p p'} \\
	&\leq \delta_f \left( \frac{(n+\veps p) (n+s p')}{n^2 + 2n \veps p + \veps s p p'} \right) - \frac{\veps p p' (n+s p')}{n^2 + 2n \veps p + \veps s p p'}\,.
\end{split}
\end{equation*}

With these assumptions, we return to estimating $\mathrm{II}_2$. By changing variables and using Fubini's theorem,
\begin{equation*}
\begin{split}
\int_{\lambda_0}^m \lambda^{\delta - 1 + p' - \widetilde{\vartheta}_f } \intdm{\cB_{\alpha} \cap \{ F > \kappa_f \lambda \} }{F^{p_*}}{\nu} \, \rmd \lambda 
	&\leq C \int_0^{\infty} \lambda^{\delta - 1 + p' - \widetilde{\vartheta}_f} \intdm{\cB_{\alpha} \cap \{ F > \lambda \} }{F^{p_*}}{\nu} \, \rmd \lambda \\
	&\leq \frac{C \nu(\cB_{2 \varrho_0})}{\delta + p' - \widetilde{\vartheta}_f} \fint_{\cB_{2 \varrho_0}} F^{p_* + \delta + p'  - \widetilde{\vartheta}_f} \, \rmd \nu \\
	&\leq \frac{C \nu(\cB_{2 \varrho_0})}{\delta} \fint_{\cB_{2 \varrho_0}} F^{p_* + \delta + p' - \widetilde{\vartheta}_f} \, \rmd \nu\,, \\
\end{split}
\end{equation*}
where $C = C(\texttt{data}, \veps)$. In the last inequality we used \eqref{eq:DeltaFCondition1} and that 
\begin{equation*}
 \delta_f \leq \frac{\veps p (n+s p')}{n(s-\veps(p-1))} \quad \Leftrightarrow \quad p' - \widetilde{\vartheta}_f \geq 0\,.
\end{equation*}
The integral in the last inequality is finite so long as $p_* + \delta + p' - \widetilde{\vartheta}_f \leq p_* + \delta_f$, but this is equivalent to \eqref{eq:MainProof:ConsequenceOfDeltaCond}. Therefore by H\"older's inequality
\begin{equation}\label{eq:MainProof:EstimateOfII2}
\begin{split}
\mathrm{II}_2 &\leq C  \nu(\cB_{2 \varrho_0}) \lambda_0^{\vartheta_f} \fint_{\cB_{2 \varrho_0}} F^{p_* + \delta + p' - \widetilde{\vartheta}_f} \, \rmd \nu \\
	&\leq C  \nu(\cB_{2 \varrho_0}) \lambda_0^{\vartheta_f} \left( \fint_{\cB_{2 \varrho_0}} F^{p_* + \delta_f} \, \rmd \nu \right)^{\frac{p_* + \delta + p' - \widetilde{\vartheta}_f}{p_* + \delta_f}} \\
	&= C \nu(\cB_{2 \varrho_0}) \lambda_0^{\vartheta_f} [\Upsilon_0(x_0,2 \varrho_0) ]^{p_* + \delta + p' - \widetilde{\vartheta}_f} \leq C \nu(\cB_{\varrho_0}) \lambda_0^{\vartheta_f + p_* + \delta + p' - \widetilde{\vartheta}_f} = C \nu(\cB_{\varrho_0}) \lambda_0^{p' + \delta}\,,
\end{split}
\end{equation}
where we additionally used \eqref{eq:MeasureDoublingProperty}. The constant $C$ depends on $\texttt{data}$ and $\veps$.

%
Combining \eqref{eq:MainProof:EstimateOfI}, \eqref{eq:MainProof:EstimateOfII1} and \eqref{eq:MainProof:EstimateOfII2} in the estimate \eqref{eq:MainProof:BeginningEstimate} gives
\begin{equation}\label{eq:MainProof:BeginningOfIteration}
\intdm{\cB_{\beta}}{H_m^{\delta} H^{p'}}{\nu} \leq \frac{1}{4} \intdm{\cB_{\alpha}}{H_m^{\delta} H^{p'}}{\nu} + C \nu(\cB_{\varrho_0}) \lambda_0^{p'+\delta}\,.
\end{equation}
Therefore, using the doubling property \eqref{eq:MeasureDoublingProperty} and using the definition of $\lambda_0$ in \eqref{eq:DefinitionOfLambda},
\begin{equation*}
\begin{split}
\left( \frac{\nu(\cB_{\beta})}{\nu(\cB_{\alpha})} \fint_{\cB_{\beta}} H_m^{\delta} H^{p'} \, \rmd \nu \right)^{1/(p'+\delta)} &\leq \left( \frac{1}{4}  \fint_{\cB_{\alpha}}H_m^{\delta} H^{p'} \, \rmd \nu \right)^{1/(p'+\delta)} + C \left(  \frac{\nu(\cB_{\varrho_0})}{\nu(\cB_{\alpha})} \right)^{1/(p'+\delta)} \lambda_0 \\
	&\leq \frac{1}{2} \left( \fint_{\cB_{\alpha}} H_m^{\delta} H^{p'} \, \rmd \nu \right)^{1/(p'+\delta)} + \frac{C}{\veps} \left( \frac{\varrho_0}{\alpha-\beta} \right)^{2n+p} \Theta(x_0,2 \varrho_0)\,.
\end{split}
\end{equation*}
where
\begin{equation}\label{eq:DefnOfTheta}
\Theta(x_0,R) := \Upsilon_0(x_0, R) + Tail(x_0, R) + \Psi_1(x_0,R)\,.
\end{equation}
We can rewrite the above inequality as
\begin{equation*}
\varphi(\beta) \leq \frac{1}{2} \varphi(\alpha) + \frac{C}{\veps} \left( \frac{\varrho_0}{\alpha-\beta} \right)^{2n+p} \Theta(x_0,2 \varrho_0)\,,
\end{equation*}
where $\varphi(\varrho) := \left( \fint_{\cB_{\varrho}(x_0)} H_m^{\delta} H^{p'} \, \rmd \nu \right)^{1/(p'+\delta)}$ for $\varrho \in [\varrho_0, \frac{3}{2} \varrho_0]$. Therefore, by an iteration lemma
\cite[Chapter 6, Lemma 6.1]{alma9916407080102311} we come to
\begin{equation*}
\left( \fint_{\cB_{\varrho_0}(x_0)} H_m^{\delta} H^{p'} \, \rmd \nu \right)^{1/(p'+\delta)} = \varphi(\varrho_0) \leq C \Theta (x_0,2 \varrho_0)\,,
\end{equation*}
where $C = C(\texttt{data},\veps)$ is independent of $m$. Therefore, we can take $m \to \infty$ and by Fatou's Lemma obtain
\begin{equation*}
\left( \fint_{\cB_{\varrho_0}(x_0)} H^{p'+\delta} \, \rmd \nu \right)^{1/(p'+\delta)} \leq C \Theta(x_0,2 \varrho_0)\,.
\end{equation*}
The result \eqref{eq:Gehring:Mainthm:SelfImprovementInequality} follows by recalling the definition of $\Theta$ and using H\"older's inequality.
\end{proof}

\begin{proof}[Proof of Theorem \ref{thm:Intro:MainThm}]
The result follows by using the definitions of $H$, $G$ and $P$. Using the $\delta>0$ from Theorem \ref{thm:Gehring:MainThm}, we have for any ball $B_R(x_0)$ with radius $R \leq 1$
\begin{equation*}
\begin{split}
\int_B \int_B [P(x,y,u(x),u(y)]^{1+ \frac{p-1}{p} \delta}  |x-y|^{\frac{p-1}{p} \delta(n-\veps)} \, \rmd y \, \rmd x &= \int_B \int_B  G^{1+ \frac{p-1}{p} \delta} \, \rmd \nu=
\int_B \int_B H^{p'+\delta} \, \rmd \nu  < \infty. 
\end{split}
\end{equation*}
As a consequence, we see that $P(x,y,u(x),u(y))$ is in the weighted space $L^{1+ \frac{p-1}{p} \delta}_{\omega} (B\times B)$ where the weight $\omega(x, y) = |x-y|^{\frac{p-1}{p} \delta(n-\veps)}$. A simple computation shows that $\omega$ is a Muckenhoupt $A_{1+ \frac{p-1}{p} \delta}(\mathbb{R}^{2n})$. Thus using reverse H\"older property of  Muckenhoupt weights, see \cite[Corollary 3.3]{AdimurthiKarthik2018GWNI}  there exists a $\tau>0$ such that 
\[
P(x,y,u(x),u(y)) \in L^{1+\tau}(B\times B), 
\]
and therefore, $P \in L^{1+ \tau}_{loc}(\bbR^{2n})$ via a covering argument. Moreover,
\begin{equation*}
\begin{split}
\infty > \int_B \int_B  G^{1+ \frac{p-1}{p} \delta} \, \rmd \nu \geq \int_B \int_B  (U^p)^{1+ \frac{p-1}{p} \delta} \, \rmd \nu  &= \int_B \int_B \frac{|u(x)-u(y)|^{p + (p-1)\delta}}{|x-y|^{n+sp+s\delta(p-1)+\veps(p-1)\delta}}\, \rmd y \, \rmd x\,.\end{split}
\end{equation*}
Rewriting the last integral,
\begin{equation*}
\int_B \int_B \frac{|u(x)-u(y)|^{p + (p-1)\delta}}{|x-y|^{n+(p+(p-1)\delta) [ s + \frac{\veps (p-1) \delta}{p+(p-1)\delta} ] }}\, \rmd y \, \rmd x < \infty\,,
\end{equation*}
so thus $u \in W^{s + \frac{\veps (p-1) \delta}{p+(p-1)\delta}, p + (p-1)\delta}_{loc}(\bbR^n)$ by a similar covering argument. Note that since $\veps < 1-s$ the differentiability exponent $s + \frac{\veps (p-1) \delta}{p+(p-1)\delta} < 1$. The definitions of the constants $\veps_0$, $\veps_1$ and $\veps_2$ now follow by inspection of the proof.
\end{proof}


\end{document}